\date{date 12 February 2010}
\title[Flatness and Completion]{On Flatness and Completion for
Infinitely Generated Modules over Noetherian Rings}
\author{Amnon Yekutieli}
\address{Department of  Mathematics, Ben Gurion University,
Be'er Sheva 84105,
Israel}
\email{amyekut@math.bgu.ac.il}
\thanks{{\em Mathematics Subject Classification} 2000.
Primary: 13J10; Secondary: 13E05, 13C10, 13C11.}
\keywords{Completion, noetherian ring, flat module, free module.}
\thanks{This research was supported by the Israel Science Foundation.}
\newtheorem{thm}[equation]{Theorem}
\newtheorem{cor}[equation]{Corollary}
\newtheorem{prop}[equation]{Proposition}
\newtheorem{lem}[equation]{Lemma}
\theoremstyle{definition}
\newtheorem{dfn}[equation]{Definition}
\newtheorem{rem}[equation]{Remark}
\newtheorem{exa}[equation]{Example}
\numberwithin{equation}{section}
\newcommand{\xar}{\xrightarrow}
\newcommand{\opn}{\operatorname}
\newcommand{\cat}[1]{\operatorname{\mathsf{#1}}}
\newcommand{\rmitem}[1]{\item[\text{\textup{(#1)}}]}
\newcommand{\mfrak}[1]{\mathfrak{#1}}
\newcommand{\mcal}[1]{\mathcal{#1}}
\newcommand{\mbf}[1]{\mathbf{#1}}
\newcommand{\mrm}[1]{\mathrm{#1}}
\newcommand{\mbb}[1]{\mathbb{#1}}
\newcommand{\smfrac}[2]{\textstyle \frac{#1}{#2}}
\newcommand{\tup}[1]{\textup{#1}}
\newcommand{\boplus}{\bigoplus\nolimits}
\newcommand{\bosum}{\sum\nolimits}
\newcommand{\boprod}{\prod\nolimits}
\newcommand{\til}[1]{\tilde{#1}}
\newcommand{\what}[1]{\widehat{#1}}
\newcommand{\hatotimes}[1]{\, \what{\otimes}_{#1} \,}
\newcommand{\bra}[1]{\langle #1 \rangle}
\newcommand{\K}{\mbb{K}}
\newcommand{\R}{\mbb{R}}
\newcommand{\N}{\mbb{N}}
\newcommand{\m}{\mfrak{m}}
\renewcommand{\a}{\mfrak{a}}
\newcommand{\n}{\mfrak{n}}
\begin{document}

\begin{abstract}
Let $A$ be a noetherian commutative ring, and let $\a$ be an ideal in $A$. We
study questions of flatness and $\a$-adic completeness for
infinitely generated $A$-modules. This is done using the notions of
{\em decaying function} and {\em $\a$-adically free $A$-module}. 
\end{abstract}

\maketitle
\tableofcontents

\setcounter{section}{-1}
\section{Introduction}

Let $A$ be a commutative ring, and let $\a$ be an ideal of $A$.
For $i \geq 0$ we write $A_i := A / \a^{i+1}$ . Given an $A$-module $M$, its 
$\a$-adic completion is the $A$-module
\begin{equation} \label{eqn:17}
\what{M} := \lim_{\leftarrow i}\, (A_i \otimes_A M) .
\end{equation}
Recall that $M$ is called {\em $\a$-adically complete}
if the canonical homomorphism $M \to \what{M}$ is bijective. 
If $M \to \what{M}$ is injective then $M$ is called 
{\em $\a$-adically separated}.
It is well known that if $A$ is noetherian and complete, then all finitely
generated $A$-modules are complete. But for infinitely generated modules, and
for non-noetherian rings, the picture is quite complicated.

We became interested in the adic completion of infinitely generated modules
in the course of our work on deformation quantization (see end of
introduction). After a while we realized that this old and apparently
simple concept was not treated adequately in the literature. This paper
contains our contributions.

In Section \ref{sec:comp} we discuss the completion operation in general. 
In Theorem \ref{thm:6} we give a useful criterion to tell whether the
$\a$-adic completion $\what{M}$ of an $A$-module $M$ is itself $\a$-adically
complete. We give an example of an $\a$-adically separated module $M$ whose 
$\a$-adic completion $\what{M}$ is not complete (Example \ref{exa:7}). 
The moral (made precise in Corollary \ref{cor:18})
is that one should distinguish between the algebraic notion of
$\a$-adic completion of $M$ (i.e.\ the inverse limit (\ref{eqn:17})),
and the topological notion of completion of the metric space $M$ (with respect
to its $\a$-adic metric, see (\ref{eqn:16})).

In Section \ref{sec:dec} of the paper we introduce the notion of 
{\em decaying function}. This idea is inspired by functional analysis.
Let $Z$ be a set, and let $M$ be an $\a$-adically separated $A$-module. A
function $f : Z \to M$ is called decaying if for every 
$i$ the composed function $Z \to A_i \otimes_A M$ has finite support. 
We denote by $\opn{F}_{\mrm{dec}}(Z, M)$ the set of all decaying
functions  $f : Z \to M$, and this is an $A$-module in the obvious way. The
submodule of finite support functions is denoted by 
$\opn{F}_{\mrm{fin}}(Z, M)$.
Note that for $M := A$ the module $\opn{F}_{\mrm{fin}}(Z, A)$
is a free $A$-module, with basis the collection 
$\{ \delta_z \}_{z \in Z}$ of delta functions.

We prove (Corollary \ref{cor:7}) 
that if $M$ is $\a$-adically complete, then
$\opn{F}_{\mrm{dec}}(Z, M)$ 
is the $\a$-adic completion of
$\opn{F}_{\mrm{fin}}(Z, M)$. (Recall however that the completion need not be
complete!) We also prove a complete version of the Nakayama Lemma (Theorem
\ref{thm:8}).

In Section \ref{sec:noeth} we assume $A$ is noetherian.
The main result here, Theorem \ref{thm:2}, says that for any set $Z$
the $A$-module $\opn{F}_{\mrm{dec}}(Z, \what{A})$ is flat and $\a$-adically
complete. Theorem \ref{thm:2} implies, among other things, 
Corollary \ref{cor:3}, which says that for any
$A$-module $M$ the completion $\what{M}$ is $\a$-adically complete.
(Note that the content of Corollary \ref{cor:3} is not new; see Remark
\ref{rem:2} for a bit of history.)
We see that the anomalies of completion disappear when $A$ is noetherian.

An $A$-module $P$ is called {\em $\a$-adically free} if it is isomorphic to
$\opn{F}_{\mrm{dec}}(Z, \what{A})$ for some set $Z$.  
We show (Corollary \ref{cor:6})
that any $\a$-adically complete $A$-module $M$ is a quotient of some 
$\a$-adically free module $P$. We also introduce the notion of
{\em $\a$-adically projective} $A$-module; and we prove that $P$ is
$\a$-adically projective if and only if it is a direct summand of an
$\a$-adically free module (Corollary \ref{cor:15}).
We give an example (Example \ref{exa:5}) demonstrating that the completion
functor $M \mapsto \what{M}$ is not right exact.

In Section \ref{sec:local} we specialize to the case of a complete noetherian
local ring $A$, with maximal ideal $\m$. Corollary \ref{cor:1} says that an
$A$-module $P$ is $\m$-adically free if and only if it is flat and $\m$-adically
complete. We discuss $\m$-adic systems of $A$-modules.

In Section \ref{sec:sheaves} we study the related geometric problem.
Namely $X$ is a topological space, and we are interested in sheaves
of $A$-modules on $X$ that are flat and $\m$-adically complete. 
Here some geometric property is needed for things to work well; we call it 
{\em locally $\mcal{N}$-simply connectedness}, where
$\mcal{N}$ is a sheaf of abelian groups on $X$ (see Definition \ref{dfn:20}).

Here are a few words on the connection between completion and deformation
quantization. Suppose $\K$ is a field, and $A$ is a complete noetherian local
$\K$-algebra, with maximal ideal $\m$, such that $A / \m \cong \K$. 
Let $\bar{B}$ be a $\K$-algebra. An {\em associative $A$-deformation} of
$\bar{B}$ is an associative unital (but not necessarily commutative)
$A$-algebra $B$, which is flat and $\m$-adically complete, together with a
$\K$-algebra isomorphism $\K \otimes_A B \cong \bar{B}$.
The main example is $\K := \R$; $A := \R[[\hbar]]$, the ring of formal power
series in the variable $\hbar$; and $\bar{B} := \mrm{C}^{\infty}(X)$, the ring 
of smooth functions on a differentiable manifold $X$. 
In our paper \cite{Ye2} we
consider the algebro-geometric version of deformation quantization, involving
sheaves of $A$-algebras. The results of Sections 4-5 are needed in \cite{Ye2}.

A possible use for the results of Section 3 would be to gain a better
understanding of the Matlis-Greenlees-May duality (cf.\ \cite{Ml}, \cite{GM} and
\cite{AJL}).

\medskip \noindent
{\bf Acknowledgments.} 
I wish to thank V. Berkovich, V. Drinfeld, M. Hochster, L. Avramov, J.R.
Strooker, A.M. Simon, Liran Shaul and Marco Porta for discussions on this
material.

\section{Some Results about Completion}
\label{sec:comp}

By default all rings in this paper are commutative. 

We begin by recalling some facts about completion.
Let $A$ be a ring, and let $\a$ be an ideal of $A$.
For $i \in \mbb{N}$ we write $A_i := A / \a^{i+1}$.
Given an $A$-module $M$, there are canonical isomorphisms 
$A_i \otimes_A M \cong M / \a^{i+1} M$.
The {\em $\a$-adic completion} of $M$ is the $A$-module
\begin{equation} \label{eqn:11}
\what{M} := \lim_{\leftarrow i}\, (A_i \otimes_A M)  .
\end{equation}
There is a canonical homomorphism
\[ \tau_M : M \to \what{M} . \]
The module $M$ is called {\em $\a$-adically separated} if $\tau_M$ is
injective, and it is called {\em $\a$-adically complete} if 
$\tau_M$ is bijective.
(Some texts, such as \cite{CA}, would say that $A$ is separated and complete
if $\tau_M$ is bijective.)
Of course $M$ is $\a$-adically separated if and only if 
$\bigcap_{i \geq 0} \a^i M = 0$.
If $M$ is $\a$-adically complete, then we often identify $M$ with $\what{M}$
via $\tau_M$. 

The $\a$-adic completion $\what{A}$ of $A$ is a ring, and 
$\tau_A : A \to \what{A}$ is ring homomorphism.
Given an $A$-module $M$, its completion $\what{M}$ is an $\what{A}$-module, with
action coming from the action of $\what{A}$ on the modules 
$A_i \otimes_A M$ in the inverse system (\ref{eqn:11}).
In particular this says that a complete $A$-module $M$ has a canonical 
$\what{A}$-module structure on it.

Given a homomorphism $\phi : M \to N$ of $A$-modules, there is an induced
homomorphism 
$\what{\phi} : \what{M} \to \what{N}$
making the diagram 
\[ \UseTips \xymatrix @C=5ex @R=5ex {
M
\ar[r]^{\phi}
\ar[d]_{\tau_M}
& 
N
\ar[d]^{\tau_N}
\\
\what{M}
\ar[r]^{\what{\phi}}
&
\what{N}
} \]
commutative. 

Sometimes we write
$\Lambda_{\a} M := \what{M}$
for an $A$-module $M$, and 
$\Lambda_{\a}(\phi) := \what{\phi}$
for a homomorphism $\phi$, following \cite{AJL}. 
This gives a functor
\[ \Lambda_{\a} : \cat{Mod} A \to \cat{Mod} A  \]
on the category of $A$-modules. The functor $\Lambda_{\a}$ is additive.
However it is not exact, nor even right exact; cf.\ Example 
\ref{exa:5}. The functor $ \Lambda_{\a}$ is not idempotent in general
(see Example \ref{exa:7}).  Corollary \ref{cor:16} says that the functor
$\Lambda_{\a}$ is idempotent if the ideal $\a$ is finitely generated. All that
can be said in general about the functor $\Lambda_{\a}$ is that it preserves
surjections:

\begin{prop} \label{prop:10}
Let $\phi : M \to N$ be a surjective homomorphism of $A$-modules. Then
$\what{\phi} : \what{M} \to \what{N}$ is also surjective.
\end{prop}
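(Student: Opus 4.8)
The plan is to reduce the statement to the Mittag-Leffler criterion for surjectivity of inverse limits. Since $\phi : M \to N$ is surjective, for each $i$ the induced homomorphism $A_i \otimes_A \phi : A_i \otimes_A M \to A_i \otimes_A N$ is surjective, because tensoring is right exact. Thus we have a surjective morphism of inverse systems $(A_i \otimes_A M) \to (A_i \otimes_A N)$ indexed by $\N$, and $\what{\phi}$ is exactly the induced map on inverse limits.

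First I would set $K_i := \ker(A_i \otimes_A \phi)$, so that for every $i$ there is a short exact sequence $0 \to K_i \to A_i \otimes_A M \to A_i \otimes_A N \to 0$. These fit together into a short exact sequence of inverse systems over $\N$. Taking inverse limits gives the exact sequence $0 \to \lim_{\leftarrow} K_i \to \what{M} \xrightarrow{\what{\phi}} \what{N} \to {\lim_{\leftarrow}}^1 K_i$, and so it suffices to show that the derived functor term ${\lim_{\leftarrow}}^1 K_i$ vanishes. This in turn will follow if the inverse system $\{ K_i \}$ satisfies the Mittag-Leffler condition.

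The key step is therefore to verify Mittag-Leffler for $\{K_i\}$. Here I would use the fact that the transition maps in the system $\{ A_i \otimes_A M \}$, namely $A_{i+1} \otimes_A M = M/\a^{i+2}M \to M/\a^{i+1}M = A_i \otimes_A M$, are surjective. A diagram chase shows that the transition map $K_{i+1} \to K_i$ is also surjective: given $x \in K_i \subseteq M/\a^{i+1}M$, lift it to some $\til{x} \in M/\a^{i+2}M$; since $\phi(x) = 0$ in $N/\a^{i+1}N$, the element $\phi(\til{x}) \in N/\a^{i+2}N$ lies in $\a^{i+1}N/\a^{i+2}N$; now lift that further back along the surjection $\a^{i+1}M/\a^{i+2}M \surj \a^{i+1}N/\a^{i+2}N$ (this last surjection again comes from right-exactness of tensoring) and subtract to correct $\til{x}$ into an element of $K_{i+1}$. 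An inverse system with surjective transition maps trivially satisfies Mittag-Leffler, hence ${\lim_{\leftarrow}}^1 K_i = 0$, and $\what{\phi}$ is surjective.

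I expect the small diagram chase establishing surjectivity of $K_{i+1} \to K_i$ to be the only real content; the rest is formal properties of $\lim_{\leftarrow}$ and ${\lim_{\leftarrow}}^1$. (If one prefers to avoid ${\lim_{\leftarrow}}^1$ altogether, one can instead argue directly: given a compatible family $(n_i) \in \what{N}$, build a compatible family $(m_i) \in \what{M}$ with $\phi(m_i) = n_i$ by induction on $i$, at each stage using surjectivity of $A_{i+1} \otimes_A M \to A_i \otimes_A M$ together with surjectivity of $K_{i+1} \to K_i$ to adjust a naive lift — this is the same computation unwound.)
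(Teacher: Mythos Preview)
Your proof is correct and follows essentially the same approach as the paper's: set up the inverse system of short exact sequences $0 \to K_i \to A_i \otimes_A M \to A_i \otimes_A N \to 0$, show that the transition maps $K_{i+1} \to K_i$ are surjective, and invoke Mittag--Leffler. The paper shortens your diagram chase by observing directly that each $K_i$ is a quotient of $\opn{Ker}(\phi)$ (by right-exactness of $A_i \otimes_A -$), from which surjectivity of $K_{i+1} \to K_i$ is immediate.
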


This result is part of \cite[Proposition 2.2.1]{St}.

\begin{proof}
For every $i \geq 0$ let us write 
$M_i := A_i \otimes_A M$ and
$N_i := A_i \otimes_A N$.
Let $\phi_i : M_i \to N_i$ be the homomorphism induced by $\phi$, and let
$K_i := \opn{Ker}(\phi_i)$. 
So there is an inverse system of exact sequences
\[ 0 \to K_i \to M_i \xar{\phi_i} N_i \to 0 . \]
Each $K_i$ is a quotient of $\opn{Ker}(\phi)$, and therefore 
$K_{i+1} \to K_i$ is surjective. By the Mittag-Leffler argument
(as in \cite[Proposition 10.2]{AM}),
in the limit we get an exact sequence
\[ 0 \to \lim_{\leftarrow i} K_i \to \what{M} \xar{\what{\phi}} 
\what{N} \to  0 . \]
In particular $\what{\phi}$ is surjective.
\end{proof}

Let $M$ be an $A$-module. The homomorphism $\tau_M : M \to \what{M}$ induces a
homomorphism
\begin{equation} \label{eqn:19}
\tau_{M, i} : A_i \otimes_A M \to A_i \otimes_A \what{M}
\end{equation}
for every $i \geq 0$. On the other hand from the inverse limit (\ref{eqn:11})
we have surjective homomorphisms
\begin{equation} \label{eqn:20}
\pi_{M, i} : \what{M} \to A_i \otimes_A M . 
\end{equation}

Here is a useful criterion to tell whether the $\a$-adic completion is complete.

\begin{thm} \label{thm:6}
Let $M$ be an $A$-module, with $\a$-adic completion $\what{M}$. 
The following conditions are equivalent\tup{:}
\begin{itemize}
\rmitem{i} The $A$-module $\what{M}$ is $\a$-adically complete.
\rmitem{ii} All the homomorphisms $\tau_{M, i}$ are surjective.
\rmitem{iii} There is equality $\opn{Ker}(\pi_{M, i}) = \a^{i+1} \what{M}$
for every $i \geq 0$.
\end{itemize}
\end{thm}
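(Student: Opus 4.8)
The plan is to prove the cycle of implications (i) $\Rightarrow$ (iii) $\Rightarrow$ (ii) $\Rightarrow$ (i), working throughout with the inverse system $M_i := A_i \otimes_A M$, the canonical maps $\pi_{M,i} : \what{M} \to M_i$, and the fact that $\what{M} = \lim_{\leftarrow i} M_i$. The key point to keep straight is that $\Ker(\pi_{M,i})$ always \emph{contains} $\a^{i+1}\what{M}$ — because $\pi_{M,i}$ is $A$-linear and $M_i$ is killed by $\a^{i+1}$ — so all the content lies in the reverse inclusion, and in relating that inclusion to surjectivity of $\tau_{M,i}$.

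First I would handle (i) $\Leftrightarrow$ (iii) together, or rather derive (iii) from (i) and then use (iii) to get (ii). For (i) $\Rightarrow$ (iii): if $\what{M}$ is $\a$-adically complete, then $\what{M} = \lim_{\leftarrow i}\bigl(\what{M}/\a^{i+1}\what{M}\bigr)$. We also have the surjections $\pi_{M,i}$ exhibiting $\what{M} = \lim_{\leftarrow i} M_i$, and each $\pi_{M,i}$ factors through $\what{M}/\a^{i+1}\what{M}$, giving a compatible family of surjections $\bar\pi_i : \what{M}/\a^{i+1}\what{M} \surj M_i$. Taking inverse limits, the induced map $\what{M} \to \what{M}$ is the identity (both sides compute $\lim_{\leftarrow} M_i$ compatibly with the structure maps), and a short diagram chase — comparing the two inverse systems level by level, using that an isomorphism of inverse limits coming from a levelwise surjection of systems with surjective transition maps forces the levelwise maps to be injective — shows each $\bar\pi_i$ is an isomorphism, i.e.\ $\Ker(\pi_{M,i}) = \a^{i+1}\what{M}$. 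The cleanest route is probably: $\what{M}/\a^{i+1}\what{M}$ surjects onto $M_i$; conversely $\pi_{M,i}$ induces the reverse, and composing and using completeness of $\what{M}$ on the inverse limit pins both down. I expect this to be the main obstacle, since it requires care about which Mittag–Leffler / inverse-limit bookkeeping is actually valid.

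Next, (iii) $\Rightarrow$ (ii): assuming $\Ker(\pi_{M,i}) = \a^{i+1}\what{M}$, we get $\what{M}/\a^{i+1}\what{M} = A_i \otimes_A \what{M} \iso M_i = A_i \otimes_A M$ via $\bar\pi_i$, and one checks that the inverse of this isomorphism is exactly $\tau_{M,i}$ of \eqref{eqn:19} — indeed $\pi_{M,i}\circ\tau_M$ is the canonical map $M \to M_i$, so after tensoring with $A_i$ the composite $\bar\pi_i \circ \tau_{M,i}$ is the identity on $M_i$; since $\bar\pi_i$ is bijective, $\tau_{M,i}$ is bijective, in particular surjective. Finally, (ii) $\Rightarrow$ (i): suppose all $\tau_{M,i} : A_i\otimes_A M \to A_i\otimes_A\what{M}$ are surjective. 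By Proposition \ref{prop:10} applied to $\tau_{M,i}$ (or directly, since $M_i$ is already complete) one knows $\what{\tau_M} : \what{M} \to \what{\what{M}}$ is surjective; I would show it is also injective. Injectivity: an element of $\what{\what{M}}$ killed going down gives a compatible family in $A_i \otimes_A \what{M}$ mapping to zero, and surjectivity of each $\tau_{M,i}$ lets us lift to a compatible-enough family in $M_i$ whose limit in $\what{M}$ is the required preimage; tracking this through shows $\what{\tau_M}$ is bijective. Then, since $\tau_{\what{M}} : \what{M} \to \what{\what{M}}$ and $\what{\tau_M} : \what{M} \to \what{\what{M}}$ agree (both are the canonical maps, as $\tau$ is a natural transformation and $\tau_M$ induces $\what{\tau_M}$), bijectivity of $\what{\tau_M}$ gives bijectivity of $\tau_{\what{M}}$, which is precisely condition (i). Throughout, the only nontrivial inputs are Proposition \ref{prop:10} and elementary manipulation of inverse limits over $\N$ with surjective transition maps.
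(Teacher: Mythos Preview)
Your cycle is reasonable in outline, and the step (iii) $\Rightarrow$ (ii) is correct, but the implication (i) $\Rightarrow$ (iii) has a genuine gap. The general principle you invoke --- that an isomorphism of inverse limits coming from levelwise surjections between $\mbb{N}$-indexed systems with surjective transition maps forces the levelwise maps to be injective --- is false. Counterexample: take $N_i = \mbb{Z}/p^2$ for all $i$ with identity transitions; let $\psi_i = \opn{id}$ for $i \geq 1$ and let $\psi_0 : \mbb{Z}/p^2 \to \mbb{Z}/p$ be reduction; give $\{M_i\}$ the induced transitions. Both systems have surjective transitions, each $\psi_i$ is surjective, and $\lim_{\leftarrow i} \psi_i$ is bijective, yet $\psi_0$ is not injective. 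What you are missing is that in the theorem's setting each $\bar{\pi}_i$ has a \emph{section} $\tau_{M,i}$ compatible with the transition maps (you even note $\bar{\pi}_i \circ \tau_{M,i} = \opn{id}$ in your (iii) $\Rightarrow$ (ii) step, but do not exploit it here). The resulting compatible splittings $A_i \otimes_A \what{M} \cong M_i \oplus \opn{Ker}(\bar{\pi}_i)$ force the transition maps on $\{\opn{Ker}(\bar{\pi}_i)\}$ to be surjective, and only then does vanishing of the inverse limit imply vanishing of each term. This splitting is exactly the engine of the paper's proof.

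Your (ii) $\Rightarrow$ (i) is also shaky: the claim that $\tau_{\what{M}} = \what{\tau_M}$ does not follow from naturality, which only gives $\tau_{\what{M}} \circ \tau_M = \what{\tau_M} \circ \tau_M$. Both maps are sections of the same retraction $\lim_{\leftarrow i} \bar{\pi}_i : \what{\what{M}} \to \what{M}$, and sections need not be unique. Under hypothesis (ii) they do coincide, but seeing this again requires the splitting (equivalently, that each $\bar{\pi}_i$ is then bijective). A cleaner route is (ii) $\Rightarrow$ (iii) $\Rightarrow$ (i): once $\tau_{M,i}$ is surjective it is bijective (being split injective), so $\bar{\pi}_i$ is bijective, giving (iii); and (iii) immediately yields $\what{M}/\a^{i+1}\what{M} \cong M_i$ compatibly, whence $\tau_{\what{M}}$ is an isomorphism.
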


\begin{proof}
The proof is based on ideas in \cite[Section 2.2]{St}.  Let us write 
$N := \what{M}$, and for $i \geq 0$ let
$M_i := A_i \otimes_A M$ and $N_i := A_i \otimes_A N$.
There is a commutative diagram
\begin{equation} \label{eqn:14}
\UseTips \xymatrix @C=7ex @R=7ex {
M
\ar[r]^{\tau_M}
\ar[d]^{\theta_{M, i}}
& 
N
\ar[d]^{\theta_{N, i}}
\ar[r]^{\pi_{M, i}}
& 
M_i
\ar[d]^{=}
\\
M_i
\ar[r]^{\tau_{M, i}}
&
N_i
\ar[r]^{\psi_i}
&
M_i
}
\end{equation}
in which $\theta_{M, i}$ and $\theta_{N, i}$ are the surjections induced by the
ring homomorphism $A \to A_i$, and $\psi_i$ is the unique
homomorphism that makes the  diagram commutative.
Since $\psi_i \circ \tau_{M, i}$ is the identity on 
$M_i$, it follows that $\tau_{M, i}$ is a split injection.
Letting $M'_i := \opn{Ker}(\psi_i)$, we have a canonical decomposition
$N_i = M_i \oplus M'_i$. So $\tau_{M, i}$ is surjective if and only if
 $\psi_i$ is injective, if and only if $M'_i = 0$.

Note also that 
$\opn{Ker}(\theta_{N, i}) = \a^{i+1} N$, and it equals
$\opn{Ker}(\pi_{M, i})$ if and only if $\psi_i$ is injective. This tells us
that (ii) $\Leftrightarrow$ (iii).

The diagrams (\ref{eqn:14}) form an inverse system. Passing to the inverse
limit 
in the second row, we get a diagram
\[ \UseTips \xymatrix @C=7ex @R=5ex {
N
\ar[r]^{\tau_{N}}
&
\what{N}
\ar[r]^{\psi}
&
N
} \]
where $\psi := \lim_{\leftarrow i}\, \psi_i$.
Again $\psi \circ \tau_N = \opn{id}_N$, so $\tau_N$ is a split injection.
Writing 
$N' := \opn{Ker}(\psi)$, we have a canonical decomposition
$\what{N} = N \oplus N'$. So $\tau_N$ is bijective (i.e.\ $N$ is complete)
if and only $N' = 0$.

The decompositions 
$N_i = M_i \oplus M'_i$
are compatible as $i$ varies, and hence
\[ N' \cong  \lim_{\leftarrow i}\, M'_i . \]
Now in the inverse system $\{ M'_i \}_{i \geq 0}$ 
the homomorphisms $M'_{j} \to M'_i$, for $j \geq i$, are surjective. 
Therefore in the limit, every homomorphism $N' \to M'_i$ is surjective.
It follows that $N' = 0$ if and only if $M'_i = 0$ for all $i$.
We conclude that (i) $\Leftrightarrow$ (ii).
\end{proof}

In the proof above we also showed that:

\begin{cor} \label{cor:17}
Let $M$ be an $A$-module. Then its $\a$-adic completion $\what{M}$ is 
$\a$-adically separated. Moreover, the homomorphism
$\tau_{\what{M}} : \what{M} \to \Lambda_{\a} \what{M}$ is a split injection.
\end{cor}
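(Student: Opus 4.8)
The plan is to extract both claims directly from the proof of Theorem \ref{thm:6}, keeping the notation used there. Recall that in that argument we set $N := \what{M}$ and, passing to the inverse limit over $i$ in the commutative diagrams (\ref{eqn:14}), produced a homomorphism $\psi : \what{N} \to N$ satisfying $\psi \circ \tau_N = \opn{id}_N$. Since $\Lambda_{\a} \what{M} = \what{N}$ and $\tau_{\what{M}} = \tau_N$, this says precisely that $\tau_{\what{M}} : \what{M} \to \Lambda_{\a} \what{M}$ is a split injection, with explicit retraction $\psi$. So the ``moreover'' assertion needs nothing beyond quoting that earlier computation.

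For the first assertion I would recall that $\a$-adic separatedness of a module is, by definition, the injectivity of its structural completion homomorphism; for the module $\what{M}$ this homomorphism is $\tau_{\what{M}}$, which we have just observed is a split injection and hence injective. Therefore $\what{M}$ is $\a$-adically separated. (Equivalently, $\bigcap_{i \geq 0} \a^{i} \what{M} \subseteq \opn{Ker}(\tau_{\what{M}}) = 0$.)

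I expect no real obstacle: the corollary is a bookkeeping statement recording two facts already established in the course of proving Theorem \ref{thm:6}. The one point that needs care is that the symbols $N$ and $\psi$ must be the ones attached to $M$ itself, so that $\what{N} = \Lambda_{\a} \what{M}$; one must not instead re-run the Theorem \ref{thm:6} construction with $\what{M}$ as its input, which would only produce a retraction of $\tau$ one level higher in the tower of completions.
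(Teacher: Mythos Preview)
Your proposal is correct and matches the paper's approach exactly: the paper's ``proof'' of this corollary is nothing more than the sentence ``In the proof above we also showed that,'' pointing back to the portion of the proof of Theorem~\ref{thm:6} where $\psi \circ \tau_N = \opn{id}_N$ is established for $N = \what{M}$. Your careful unpacking of which identifications are being made (and your cautionary remark about not re-running the construction with $\what{M}$ as input) is a faithful and slightly more explicit version of what the paper leaves implicit.
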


Some examples of the bad behavior of completion can be found in the literature.
Strooker \cite[Subsection 2.2.5]{St} mentions unpublished work of Bartijn. And
there is an example in \cite{CA}, which is very close to the example we now
present. 

\begin{exa} \label{exa:7}
Let $\K$ be a field, and let
$A := \K[ t_1, t_2, \ldots ]$, the ring of polynomials in
countably many variables. In it consider the maximal ideal
$\a = (t_1, t_2, \ldots )$.
We will produce {\em an $A$-module $M$ whose $\a$-adic completion
$\what{M}$ is not  $\a$-adically complete}. 
In fact we will take $M := A$, the free module of rank $1$.

Let $\what{A}$ be the $\a$-adic completion of $A$, and let
$\mfrak{b} := \opn{Ker}(\pi_{A, 0} : \what{A} \to A_0)$.
The ring $\what{A}$ is canonically isomorphic to the ring of formal power series
$\K[[ t_1, t_2, \ldots ]]$.
In \cite[Exercise III.2.12]{CA} it is shown that  the ring 
$\what{A}$ is not $\mfrak{b}$-adically complete (when $\K$ is finite).
As stated in the previous paragraph, we will show something slightly different:
the $A$-module $\what{A}$ is not $\a$-adically complete (with no assumption on
the field $\K$). This is done using Theorem \ref{thm:6}. 

In order to utilize the notation of Theorem \ref{thm:6} and its proof,
let's write $M := A$ and $N := \what{M}$. To prove that $N$ is not $\a$-adically
complete it suffices to show that the homomorphism
$\tau_{M, 0} : M_0 \to N_0$
is not surjective.

Consider an element $b \in N$, with image 
$\bar{b} := \theta_{N,0}(b) \in N_0$.
The element $\bar{b}$ is in the image of $\tau_{M, 0}$ if and only if
$b \in \a N + \opn{Im}(\tau_M)$.
Now any element of  $\a N$ is of the form
$\sum_{k = 1}^n \, t_{k} b_k$ for some $n \geq 0$ and $b_k \in N$. 
And any element of $\opn{Im}(\tau_M)$ is a polynomial; so it lies in
$\K \oplus \a N$. Thus $b \in \a N + \opn{Im}(\tau_M)$ if and only if
\begin{equation} \label{eqn:13}
b =  \lambda + \sum_{k = 1}^n \, t_{k} b_k 
\end{equation}
for some $\lambda \in \K$, $n \geq 0$ and $b_k \in N$.

Let us take $b \in N = \K[[ t_1, t_2, \ldots ]]$ to be the power series
$b :=  \sum_{k = 1}^{\infty} t_{k}^{k}$.
Then $b$ cannot be written as in (\ref{eqn:13}), and hence
$\bar{b} \notin \opn{Im}(\tau_{M, 0})$.
\end{exa}

We end this section with a discussion of the topological interpretation of
$\a$-adic completion.
Any $A$-module $M$ has on it the $\a$-adic topology,
in which the collection of submodules $\{ \a^i M \}_{i \geq 0}$ is a basis of
open neighborhoods of the element $0$. Any homomorphism
$\phi : M \to N$ of $A$-modules is continuous for the $\a$-adic topologies.

Now consider an $\a$-adically separated $A$-module $M$. 
Recall that for an element $m \in M$ its {\em order}
(with respect to $\a$) is
\begin{equation} \label{eqn:18}
\opn{ord}_{\a}(m) := \opn{sup} \, \{ i \in \mbb{N} \mid
m \in \a^i M \} \in \mbb{N} \cup \{ \infty \} .
\end{equation}
Sometimes we shall write $\opn{ord}_{\a, M}(m)$, when we need to emphasize
the module $M$ (e.g.\ in the proof of Lemma \ref{lem:1}). 
Since $\bigcap_{i \geq 0} \a^i M = 0$, we see that 
$\opn{ord}_{\a}(m) = \infty$ if and only if $m = 0$. 
And $\opn{ord}_{\a}(m) = i \in \mbb{N}$ if and only if
$m \in \a^i M - \a^{i+1} M$.

For elements $m, n \in M$ define 
\begin{equation} \label{eqn:16}
\opn{dist}_{\a}(m, n) := (\smfrac{1}{2})^{\opn{ord}_{\a}(m-n)} \, .
\end{equation}
The function $\opn{dist}_{\a}$ is a metric on $M$, which we call 
the {\em $\a$-adic metric}. This metric determines the $\a$-adic topology
on $M$. The module $M$ is $\a$-adically complete
if and only if it is a complete metric space
with respect to the $\a$-adic metric.
See \cite[Section 10]{AM} or \cite[Section III.2.5]{CA}.

We continue with the assumption that $M$ is $\a$-adically separated; and we
view $M$ as a submodule of $\what{M}$ via the homomorphism $\tau_M$.
The $\a$-adically separated $A$-module
$\what{M}$ has on it two descending filtrations, defining two possibly distinct
metrics:
\begin{enumerate}
\rmitem{a} The filtration 
$\{ F^i \what{M} \}_{i \geq 0}$, where 
$F^i \what{M} := \opn{Ker}(\pi_{M, i-1})$
for $i \geq 1$, and $F^0 \what{M} := \what{M}$.
Here $\pi_{M, i-1}$ is the homomorphism in (\ref{eqn:20}).
There is a corresponding order function
\[ \opn{ord}'(m) := \opn{sup} \, \{ i \in \mbb{N} \mid
m \in F^i \what{M} \}  \]
for $m \in \what{M}$, and the corresponding metric is
\[ \opn{dist}'(m, n) := (\smfrac{1}{2})^{\opn{ord}'(m-n)} \]
for $m, n \in \what{M}$.

\rmitem{b} The filtration 
$\{ \a^{i} \what{M} \}_{i \geq 0}$, namely the $\a$-adic filtration of the
$A$-module $\what{M}$ itself. The corresponding order function 
$\opn{ord}_{\a, \what{M}}$ and metric $\opn{dist}_{\a, \what{M}}$
are given by formulas (\ref{eqn:18}) and (\ref{eqn:16}), replacing $M$ with 
$\what{M}$.
\end{enumerate}

The standard fact (see \cite[Section 10]{AM}) is that the metric space
$(\what{M}, \opn{dist}')$ is always the completion of the metric space
$(M, \opn{dist}_{\a})$. However:

\begin{cor} \label{cor:18}
Let $M$ be an $A$-module, with $\a$-adic completion $\what{M}$. The following
conditions are equivalent: 
\begin{itemize}
\rmitem{i} The $A$-module $\what{M}$ is $\a$-adically complete.
\rmitem{ii} The metrics $\opn{dist}_{\a, \what{M}}$ and
$\opn{dist}'$ on $\what{M}$ coincide.
\end{itemize}
\end{cor}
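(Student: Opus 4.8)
The plan is to reduce the equality of the two metrics on $N := \what{M}$ to an equality of the two filtrations on $N$, and then to recognize the latter as condition \tup{(iii)} of Theorem \ref{thm:6}, which by that theorem is equivalent to \tup{(i)}.

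First I would record the basic comparison of the two filtrations. Since $\pi_{M, i-1} : \what{M} \to A_{i-1} \otimes_A M \cong M / \a^i M$ is $A$-linear and its target is annihilated by $\a^i$, we always have $\a^i \what{M} \subseteq \opn{Ker}(\pi_{M, i-1}) = F^i \what{M}$ for $i \geq 1$ (and $F^0 \what{M} = \a^0 \what{M} = \what{M}$ trivially). Hence $\opn{ord}_{\a, \what{M}}(m) \leq \opn{ord}'(m)$ for all $m \in \what{M}$, and therefore $\opn{dist}_{\a, \what{M}} \geq \opn{dist}'$ pointwise. So the two metrics never cross, and the only question is whether these inequalities can be strict somewhere.

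Next I would note that two metrics of the shape $\opn{dist}(m, n) = (\smfrac{1}{2})^{\opn{ord}(m - n)}$ on $\what{M}$ coincide if and only if the underlying order functions coincide (set $n := 0$), and that $\opn{ord}_{\a, \what{M}} = \opn{ord}'$ if and only if $\a^i \what{M} = F^i \what{M}$ for all $i$. One inclusion here is the previous step; for the reverse, if $m \in F^i \what{M}$ then $\opn{ord}'(m) \geq i$, so $\opn{ord}_{\a, \what{M}}(m) \geq i$, i.e.\ $m \in \a^i \what{M}$. Thus condition \tup{(ii)} of the corollary is equivalent to the family of equalities $F^{i+1} \what{M} = \a^{i+1} \what{M}$ for all $i \geq 0$; that is, to $\opn{Ker}(\pi_{M, i}) = \a^{i+1} \what{M}$ for every $i \geq 0$.

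Finally I would invoke Theorem \ref{thm:6}: the displayed condition is exactly its item \tup{(iii)}, which is equivalent to item \tup{(i)}, namely that $\what{M}$ is $\a$-adically complete. This yields \tup{(i)} $\Leftrightarrow$ \tup{(ii)}. I do not expect a real obstacle here; essentially all the mathematical content already sits in Theorem \ref{thm:6}, and what remains is the elementary bookkeeping that passes between ``equal metrics'', ``equal order functions'', and ``equal filtrations'' — the only point worth a little care being to exploit the always-valid inclusion $\a^i \what{M} \subseteq F^i \what{M}$, so that matching the order functions reduces to checking a single inclusion.
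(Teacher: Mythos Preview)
Your proof is correct and takes essentially the same approach as the paper: the paper's proof is the single sentence ``This is immediate from the equivalence (i) $\Leftrightarrow$ (iii) in Theorem \ref{thm:6}.'' You have simply spelled out the elementary translation between ``equal metrics'', ``equal order functions'', and ``equal filtrations'' that the paper leaves implicit in the word ``immediate''.
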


\begin{proof}
This is immediate from the equivalence
(i) $\Leftrightarrow$ (iii) in Theorem \ref{thm:6}.
\end{proof}

\begin{exa}
Consider the module $M$ from Example \ref{exa:7}. Since its $\a$-adic
completion $\what{M}$ is not $\a$-adically complete, we know that the metrics 
$\opn{dist}_{\a, \what{M}}$ and $\opn{dist}'$ are not the same. Indeed, a
little calculation shows that for the element
$b =  \sum_{k = 1}^{\infty} t_{k}^{k}$ we have
$\opn{dist}_{\a, \what{M}}(b, 0) = 1$, but
$\opn{dist}'(b, 0) = \smfrac{1}{2}$.
\end{exa}

\section{Modules of Decaying Functions}
\label{sec:dec}

The ideas in this section are inspired by functional analysis.
As in Section \ref{sec:comp}, $A$ is a ring and $\a$ is an ideal in it.

Let $Z$ be a set, and let $M$ be an $A$-module. We denote by 
$\opn{F}(Z, M)$ the set of all functions $f : Z \to M$, and by
$\opn{F}_{\mrm{fin}}(Z, M)$ the set of functions with finite support. 
So 
\[ \opn{F}(Z, M) \cong \boprod_{z \in Z} M  \]
and
\[ \opn{F}_{\mrm{fin}}(Z, M) \cong \boplus_{z \in Z} M . \]
The set $\opn{F}(Z, M)$ is an $A$-module, and 
$\opn{F}_{\mrm{fin}}(Z, M)$ is a submodule. 

Now let us look at the special case $M = A$. 
For every $z \in Z$ there is the delta function 
$\delta_z \in \opn{F}_{\mrm{fin}}(Z, A)$, namely
$\delta_z(z) := 1$, and $\delta_z(z') := 0$ for $z' \neq z$. 
The $A$-module $\opn{F}_{\mrm{fin}}(Z, A)$ is free; as basis we
can take the collection of elements $\{ \delta_z \}_{z \in Z}$.

Suppose $M$ is an $\a$-adically separated $A$-module, and $m \in M$.
Recall the $\a$-adic order $\opn{ord}_{\a}(m)$ from formula (\ref{eqn:18}).

\begin{dfn} \label{dfn:1}
Let $Z$ be a set and $M$ an $\a$-adically separated $A$-module.
A function $f : Z \to M$ is called {\em decaying}
if for every $i \in \mbb{N}$ the set
\[ \{ z \in Z \mid \opn{ord}_{\a} ( f(z) ) \leq i \} \]
is finite. We denote by $\opn{F}_{\mrm{dec}}(Z, M)$
the set of all decaying functions $f : Z \to M$.
\end{dfn}

The support of a decaying function is of course countable. 
Any function with finite support is decaying. Thus we have inclusions
\[ \opn{F}_{\mrm{fin}}(Z, M) \subset \opn{F}_{\mrm{dec}}(Z, M)
\subset \opn{F}_{}(Z, M) . \]
It is easy to see that $\opn{F}_{\mrm{dec}}(Z, M)$ is an
$A$-submodule of $\opn{F}(Z, M)$.

\begin{exa} \label{exa:1}
Suppose that $\a^i M = 0$ for some $i$. Then a decaying function has
finite support, and we have 
\[ \opn{F}_{\mrm{fin}}(Z, M) = \opn{F}_{\mrm{dec}}(Z, M) . \]
\end{exa}

\begin{exa} \label{exa:2}
Suppose $A$ is complete.
Take variables $t_1, \ldots, t_n$, and consider the ring of
restricted formal power series 
$A\{ t_1, \ldots, t_n \}$ as in \cite[Section III.4.2]{CA}.
Then as $A$-modules we have
$A\{ t_1, \ldots, t_n \} \cong 
\opn{F}_{\mrm{dec}}(\mbb{N}^n, A)$.
\end{exa}

\begin{prop}
Let $M$ be an $\a$-adically separated module. Then 
$\opn{F}_{\mrm{dec}}(Z, M)$ is $\a$-adically separated.
\end{prop}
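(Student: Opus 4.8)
The plan is to show that $\bigcap_{i \geq 0} \a^i \opn{F}_{\mrm{dec}}(Z, M) = 0$, which by the criterion recalled in Section \ref{sec:comp} is equivalent to $\a$-adic separatedness. Write $N := \opn{F}_{\mrm{dec}}(Z, M)$ for brevity. The first step is to understand what $\a^i N$ looks like; the point is that a decaying function $f$ lying in $\a^i N$ must take all its values in $\a^i M$, i.e.\ $\a^i N \subseteq \opn{F}_{\mrm{dec}}(Z, \a^i M)$. This is immediate from the evaluation maps $\opn{ev}_z : N \to M$, $f \mapsto f(z)$, which are $A$-module homomorphisms, hence carry $\a^i N$ into $\a^i M$.

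Granting that inclusion, suppose $f \in \bigcap_{i \geq 0} \a^i N$. Then for every $z \in Z$ and every $i$ we have $f(z) \in \a^i M$, so $f(z) \in \bigcap_{i \geq 0} \a^i M$. Since $M$ is $\a$-adically separated, $\bigcap_{i \geq 0} \a^i M = 0$, so $f(z) = 0$ for all $z$, i.e.\ $f = 0$. Hence $N$ is $\a$-adically separated. In particular this also makes sense of the phrase ``$\a$-adically separated'' in the conclusion, since the notion of decaying function for $N$ in later sections presupposes it.

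The only real point requiring a moment's care — and the step I expect to be the mild obstacle — is the very first inclusion $\a^i N \subseteq \opn{F}_{\mrm{dec}}(Z, \a^i M)$, because an element of $\a^i N$ is a finite sum $\sum_j a_j f_j$ with $a_j \in \a^i$ and $f_j \in N$, and one must check both that each value lands in $\a^i M$ (clear, evaluating termwise) and that the sum is still decaying (clear, since a finite sum of decaying functions is decaying, the relevant ``small order'' sets being contained in the union of those for the $f_j$). Neither half is hard; I would simply record that the evaluation homomorphisms $\opn{ev}_z$ reduce the whole statement to separatedness of $M$ itself, and that $\opn{F}_{\mrm{dec}}(Z, M)$ being an $A$-submodule of $\opn{F}(Z,M) \cong \boprod_{z \in Z} M$ makes the intersection computation functorial in the obvious way. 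So the proof is essentially a one-line reduction via pointwise evaluation.
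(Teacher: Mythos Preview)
Your proof is correct and is essentially identical to the paper's: both establish the inclusion $\a^i \opn{F}_{\mrm{dec}}(Z,M) \subset \opn{F}_{\mrm{dec}}(Z,\a^i M)$ (you phrase it via the evaluation homomorphisms $\opn{ev}_z$, the paper just writes $a f(z) \in \a^i M$), and then intersect over $i$ to reduce to $\bigcap_i \a^i M = 0$.
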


\begin{proof}
Let $f : Z \to M$ be a decaying function and let
$a \in \a^i$. Then $a f(z) \in \a^i M$ for every $z \in Z$. 
We see that
\[ \a^i \cdot \opn{F}_{\mrm{dec}}(Z, M) \subset
\opn{F}_{\mrm{dec}}(Z, \a^i M) . \]
But $M$ is separated, so 
\[ \bigcap\nolimits_{i \geq 0} \, \a^i \cdot \opn{F}_{\mrm{dec}}(Z, M) \subset
\bigcap\nolimits_{i \geq 0} \, \opn{F}_{\mrm{dec}}(Z, \a^i M) = 
\opn{F}_{\mrm{dec}} \bigl( Z,\, \bigcap\nolimits_{i \geq 0} \,\a^i M \bigr) 
= 0 . \] 
\end{proof}

Let $\phi : M \to N$ be a homomorphism between $\a$-adically
separated $A$-modules. For any $m \in M$ we have
$\opn{ord}_{\a, N}(\phi(m)) \geq \opn{ord}_{\a, M}(m)$. Hence 
there is an induced $A$-linear homomorphism
\[ \opn{F}_{\mrm{dec}}(Z, M) \to 
\opn{F}_{\mrm{dec}}(Z, N) , \
f \mapsto \phi \circ f . \]
Let us denote by
$\cat{Mod}_{\mrm{sep}} A$ the full subcategory of $\cat{Mod} A$ consisting of
$\a$-adically separated modules; this is an additive category. We see that
for a fixed set $Z$ there is an additive functor
\[ \opn{F}_{\mrm{dec}}(Z, -) : \cat{Mod}_{\mrm{sep}} A \to 
\cat{Mod}_{\mrm{sep}} A . \]

Suppose $M$ is an $\a$-adically separated $A$-module.
Let $Z$ be a set, and let 
$f : Z \to M$ be a function. One says that the {\em series 
$\sum_{z \in Z} f(z)$ converges} in the $\a$-adic topology, to some element 
$m \in M$, if for any
natural number $i$ there is a finite subset
$Z_i \subset Z$, such that
\[ m - \sum_{z \in Z_i} f(z) \in \a^{i+1} M , \]
and
$f(z) \in \a^{i+1} M$ for all $z \notin Z_i$.
In this case one writes
\[ m = \sum_{z \in Z} f(z) . \]
Of course if the series converges then the sum $m$ is
unique. Cf.\ \cite[Section III.2.6]{CA}.

\begin{prop} \label{prop:1}
Let $M$ be an $\a$-adically complete $A$-module, and let
$f : Z \to M$ be a function. The following conditions are
equivalent\tup{:}
\begin{enumerate}
\rmitem{i} The function $f$ is decaying.
\rmitem{ii} The series $\sum_{z \in Z} f(z)$ converges in $M$ for
the $\a$-adic topology. 
\end{enumerate}
\end{prop}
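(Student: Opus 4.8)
The plan is to prove the equivalence directly from the definitions, treating the two implications separately and using the completeness of $M$ only for the direction (i) $\Rightarrow$ (ii).

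\medskip \noindent
\textbf{Plan for (i) $\Rightarrow$ (ii).} Suppose $f : Z \to M$ is decaying. For each $i \in \mbb{N}$ let $Z_i := \{ z \in Z \mid \opn{ord}_{\a}(f(z)) \leq i \}$, which is finite by hypothesis; note $Z_0 \subseteq Z_1 \subseteq \cdots$. Set $s_i := \sum_{z \in Z_i} f(z) \in M$. For $j \geq i$, the difference $s_j - s_i = \sum_{z \in Z_j - Z_i} f(z)$ is a finite sum of elements each lying in $\a^{i+1} M$ (since $z \notin Z_i$ means $\opn{ord}_{\a}(f(z)) \geq i+1$), so $s_j - s_i \in \a^{i+1} M$. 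Thus $(s_i)_{i \in \mbb{N}}$ is a Cauchy sequence for the $\a$-adic metric on $M$; since $M$ is $\a$-adically complete it is a complete metric space, so the sequence converges to some $m \in M$, and $m - s_i \in \a^{i+1} M$ for all $i$. Together with $f(z) \in \a^{i+1} M$ for all $z \notin Z_i$, this is exactly the definition of convergence of $\sum_{z \in Z} f(z)$ to $m$; hence (ii) holds.

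\medskip \noindent
\textbf{Plan for (ii) $\Rightarrow$ (i).} Suppose the series converges to some $m \in M$. Fix $i \in \mbb{N}$; by definition of convergence there is a finite subset $Z_i \subseteq Z$ with $f(z) \in \a^{i+1} M$ for all $z \notin Z_i$. Then $\{ z \in Z \mid \opn{ord}_{\a}(f(z)) \leq i \}$ is contained in $Z_i$, hence finite. As this holds for every $i$, the function $f$ is decaying. (This direction does not use completeness, only $\a$-adic separatedness, which is implicit in $M$ being complete.)

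\medskip \noindent
I expect no genuine obstacle here; the only point requiring care is the passage from a Cauchy sequence of partial sums to actual convergence of the (possibly uncountably indexed) series, which is handled cleanly once one observes that $f$ has countable support concentrated on $\bigcup_i Z_i$ and that the relevant condition ``$f(z) \in \a^{i+1} M$ for $z \notin Z_i$'' is built into the decay hypothesis. One should also remark that the limit $m$ is independent of the chosen exhaustion $(Z_i)$, but this follows from the already-noted uniqueness of the sum of a convergent series.
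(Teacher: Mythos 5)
Your proof is correct. The paper itself omits the argument (``The proof is easy, and we leave it out''), and yours is the standard one it presumably has in mind: take the nested finite sets $Z_i$ of elements of order $\leq i$, show the partial sums form a Cauchy sequence, and invoke completeness; the converse is a direct reading of the definitions. The only step you leave tacit is that $m - s_i \in \a^{i+1}M$ follows from $s_j - s_i \in \a^{i+1}M$ for all $j \geq i$ because $\a^{i+1}M$, being an open submodule in the $\a$-adic topology, is also closed; this is routine but worth a word if you want the proof to be fully self-contained.
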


The proof is easy, and we leave it out. An immediate consequence is that for an
$\a$-adically complete module $M$ there is an $A$-linear homomorphism
\[ \opn{F}_{\mrm{dec}}(Z, M) \to M , \ 
f \mapsto \sum_{z \in Z} f(z) . \]

\begin{cor} \label{cor:12}
Let $Z$ be a set, let $M$ be an $A$-module, and let $f : Z \to M$ be any
function.
Assume $A$ and $M$ are $\a$-adically complete. Then for any 
$g \in \opn{F}_{\mrm{dec}}(Z, A)$ the series 
$\sum_{z \in Z} g(z) f(z)$
converges in $M$. This gives rise to an $A$-linear homomorphism
\[ \phi : \opn{F}_{\mrm{dec}}(Z, A) \to M, \quad
\phi(g) := \sum_{z \in Z} g(z) f(z) . \]
\end{cor}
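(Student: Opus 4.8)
The plan is to reduce the statement to Proposition \ref{prop:1} by checking that, for a fixed decaying function $g$, the function $z \mapsto g(z) f(z)$ is itself decaying, and then to verify $A$-linearity of $\phi$ directly. First I would observe that multiplication by a fixed $a \in A$ does not decrease $\a$-adic order: $\opn{ord}_{\a, M}(a m) \geq \opn{ord}_{\a, M}(m)$ for all $m \in M$. Hence $\opn{ord}_{\a, M}(g(z) f(z)) \geq \opn{ord}_{\a, A}(g(z))$ (here I use that $\opn{ord}_{\a, M}(g(z) f(z)) \geq \opn{ord}_{\a, A}(g(z))$ because $g(z) \in \a^j$ forces $g(z) f(z) \in \a^j M$). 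Therefore for each $i$ the set $\{ z \mid \opn{ord}_{\a, M}(g(z) f(z)) \leq i \}$ is contained in $\{ z \mid \opn{ord}_{\a, A}(g(z)) \leq i \}$, which is finite because $g$ is decaying. So $z \mapsto g(z) f(z)$ is a decaying function $Z \to M$, and since $M$ is $\a$-adically complete, Proposition \ref{prop:1} tells us the series $\sum_{z \in Z} g(z) f(z)$ converges in $M$ to a well-defined element. This gives the map $\phi$ as a function.

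Next I would check that $\phi$ is $A$-linear. Additivity $\phi(g + g') = \phi(g) + \phi(g')$ and homogeneity $\phi(a g) = a \phi(g)$ both follow from elementary manipulations with convergent series in the $\a$-adic topology: if $\sum_{z} h(z) = m$ and $\sum_z h'(z) = m'$ converge, then $\sum_z (h(z) + h'(z)) = m + m'$ (take a common finite subset $Z_i$ for both), and if $\sum_z h(z) = m$ then $\sum_z a h(z) = a m$ (using again that multiplication by $a$ only raises order, so the same $Z_i$ works, possibly with $i$ adjusted). The only mild subtlety is that $\opn{F}_{\mrm{dec}}(Z, A)$ is closed under these operations, which is already recorded since it is an $A$-submodule of $\opn{F}(Z, A)$.

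I expect the statement to be essentially routine given Proposition \ref{prop:1}; the one place that deserves a sentence of care is the order inequality $\opn{ord}_{\a, M}(g(z) f(z)) \geq \opn{ord}_{\a, A}(g(z))$, which rests on the trivial fact $\a^j \cdot M \subseteq \a^j M$ but must be invoked explicitly so that the decay of $g$ transfers to decay of $z \mapsto g(z) f(z)$ as a function into $M$ (note that we do \emph{not} need $f$ to be decaying, only $g$). A secondary point worth a remark is that $A$ being complete is used only to make sense of $\opn{ord}_{\a, A}$ via $\a$-adic separatedness and to know that Proposition \ref{prop:1} applies to $A$ as well if one wants to interpret $\opn{F}_{\mrm{dec}}(Z, A)$ intrinsically; the essential completeness hypothesis for the convergence of $\sum_z g(z) f(z)$ is that of $M$.
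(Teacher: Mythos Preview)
Your proposal is correct and follows exactly the same approach as the paper: show that $z \mapsto g(z) f(z)$ is decaying (via the order inequality you spell out) and invoke Proposition \ref{prop:1}, then note that $A$-linearity is routine. The paper's proof is the same but terser, omitting the explicit verification of the order inequality and the linearity check that you provide.
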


\begin{proof}
Given $g \in \opn{F}_{\mrm{dec}}(Z, A)$ consider the 
the function $Z \to M$, 
$z \mapsto g(z) f(z)$. This function is decaying, so by Proposition
\ref{prop:1} the series 
$\sum_{z \in Z} g(z) f(z)$ converges. 
It is easy to check that the resulting function $\phi$ is $A$-linear.
\end{proof}

\begin{thm} \label{thm:7}
Let $M$ be an $A$-module whose $\a$-adic completion
$\what{M}$ is $\a$-adically complete. Then the canonical homomorphism
\begin{equation} \label{equ:1}
\opn{F}_{\mrm{dec}}(Z, \what{M}) \to 
\lim_{\leftarrow i}\, \opn{F}_{\mrm{fin}}(Z, A_i \otimes_A M) 
\end{equation}
induced by 
\[ \pi_{M, i} : \what{M} \to A_i \otimes_A M \]
is bijective.
\end{thm}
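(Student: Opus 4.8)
The plan is to exhibit an explicit two-sided inverse to the map \eqref{equ:1}, the only nontrivial verification being that the candidate preimage of a compatible family of finitely supported functions is genuinely \emph{decaying}. Throughout I abbreviate $\pi_i := \pi_{M,i} : \what{M} \to A_i \otimes_A M$. Since $\what{M}$ is $\a$-adically complete it is in particular $\a$-adically separated (Corollary \ref{cor:17}), so $\opn{F}_{\mrm{dec}}(Z, \what{M})$ is defined; and, crucially, the implication (i)$\Rightarrow$(iii) of Theorem \ref{thm:6} gives
\[
\opn{Ker}(\pi_i) = \a^{i+1} \what{M} \qquad \text{for every } i \geq 0 .
\]
Hence for $m \in \what{M}$ and $i \in \mbb{N}$ one has $\pi_i(m) = 0$ if and only if $\opn{ord}_{\a}(m) > i$; equivalently, for any function $f : Z \to \what{M}$ the support of $\pi_i \circ f$ equals $\{ z \in Z \mid \opn{ord}_{\a}(f(z)) \leq i \}$. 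In particular, if $f$ is decaying then each $\pi_i \circ f$ has finite support, i.e.\ lies in $\opn{F}_{\mrm{fin}}(Z, A_i \otimes_A M)$; compatibility in $i$ is immediate from the compatibility of the $\pi_i$ with the inverse system \eqref{eqn:11}. So \eqref{equ:1} is a well-defined $A$-linear homomorphism.

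For injectivity, suppose $f \in \opn{F}_{\mrm{dec}}(Z, \what{M})$ maps to $0$, i.e.\ $\pi_i(f(z)) = 0$ for all $i$ and all $z$. Then each $f(z)$ lies in $\bigcap_{i \geq 0} \opn{Ker}(\pi_i) = \bigcap_{i \geq 0} \a^{i+1}\what{M} = 0$, since $\what{M}$ is separated; thus $f = 0$. For surjectivity, take a compatible family $(g_i)_{i \geq 0}$ with $g_i \in \opn{F}_{\mrm{fin}}(Z, A_i \otimes_A M)$, where ``compatible'' means $g_i$ maps to $g_{i-1}$ under the transition map induced by $A_i \otimes_A M \to A_{i-1} \otimes_A M$. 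For each fixed $z \in Z$ the sequence $(g_i(z))_{i \geq 0}$ is then a compatible family in the inverse system \eqref{eqn:11}, so it defines a unique element $f(z) := (g_i(z))_{i \geq 0} \in \lim_{\leftarrow i}(A_i \otimes_A M) = \what{M}$, with $\pi_i(f(z)) = g_i(z)$. This produces a function $f : Z \to \what{M}$ with $\pi_i \circ f = g_i$ for all $i$.

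It remains only to check that $f$ is decaying, and this is the crux of the argument. By the observation in the first paragraph,
\[
\{ z \in Z \mid \opn{ord}_{\a}(f(z)) \leq i \} = \opn{supp}(\pi_i \circ f) = \opn{supp}(g_i),
\]
which is finite because $g_i \in \opn{F}_{\mrm{fin}}(Z, A_i \otimes_A M)$. Hence $f \in \opn{F}_{\mrm{dec}}(Z, \what{M})$, and it maps to $(g_i)_{i \geq 0}$ under \eqref{equ:1}, proving surjectivity. I expect this last step to be the only real obstacle: the hypothesis that $\what{M}$ be $\a$-adically complete is used precisely, and only, through the equality $\opn{Ker}(\pi_i) = \a^{i+1}\what{M}$. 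Without completeness one has merely $\a^{i+1}\what{M} \subseteq \opn{Ker}(\pi_i)$ — still enough for well-definedness of \eqref{equ:1} — but then $\{ z \mid \opn{ord}_{\a}(f(z)) \leq i \}$ could strictly contain $\opn{supp}(g_i)$ and fail to be finite, so the constructed $f$ need not decay and the map need not be surjective.
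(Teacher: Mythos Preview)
Your proof is correct and follows essentially the same approach as the paper: construct the inverse pointwise via $f(z) := \lim_{\leftarrow i} g_i(z)$, and use the equality $\opn{Ker}(\pi_{M,i}) = \a^{i+1}\what{M}$ from Theorem~\ref{thm:6} to translate finite support of each $g_i$ into the decay condition on $f$. Your treatment is somewhat more explicit than the paper's (you spell out well-definedness and the precise role of the completeness hypothesis), but the argument is the same.
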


\begin{proof}
Suppose $f \in \opn{F}_{\mrm{dec}}(Z, \what{M})$ is nonzero. So 
$f(z) \neq 0$ for some $z \in Z$. Since $\what{M}$ is separated, there is
some $i$ such that the image $\pi_{M, i}(f(z))$ of $f(z)$ in 
$A_i \otimes_A M$ is nonzero. This implies that the homomorphism
(\ref{equ:1}) is injective.

Conversely, suppose $\{ f_i \}_{i \geq 0}$ is an inverse system of
functions $f_i : Z \to A_i \otimes_A M$, each with finite
support. For any $z \in Z$ let 
\[ f(z) := \lim_{\leftarrow i}\, f_i(z) \in 
\lim_{\leftarrow i}\, (A_i \otimes_A M) = \what{M} . \] 
We get a function $f : Z \to \what{M}$ satisfying
$\pi_{M, i} \circ f = f_i$. Since each $f_i$ has finite support, and 
by Theorem \ref{thm:6} we know that 
$\opn{Ker}(\pi_{M, i}) = \a^{i+1} \what{M}$,
it follows that $f$ is a decaying function.
\end{proof}

\begin{cor} \label{cor:7}
Let $M$ be as in Theorem \tup{\ref{thm:7}}. Then the homomorphism
\[ \opn{F}_{\mrm{fin}}(Z, M) \to \opn{F}_{\mrm{dec}}(Z, \what{M}) \]
induced by $\tau_M$ makes $\opn{F}_{\mrm{dec}}(Z, \what{M})$ into an $\a$-adic
completion of $\opn{F}_{\mrm{fin}}(Z, M)$.
More precisely, there is an isomorphism
\[ \opn{F}_{\mrm{dec}}(Z, \what{M}) \cong 
\Lambda_{\a} \opn{F}_{\mrm{fin}}(Z, M) \]
that commutes with the homomorphisms from $\opn{F}_{\mrm{fin}}(Z, M)$, and is
functorial in $M$.
\end{cor}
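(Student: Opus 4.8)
The plan is to derive Corollary \ref{cor:7} directly from Theorem \ref{thm:7} by recognizing the right-hand side of \eqref{equ:1} as the $\a$-adic completion of $\opn{F}_{\mrm{fin}}(Z, M)$. First I would observe that for each $i \geq 0$ the obvious isomorphism $A_i \otimes_A M \cong M / \a^{i+1} M$ is compatible with taking finite-support functions, so
\[ A_i \otimes_A \opn{F}_{\mrm{fin}}(Z, M) \cong \opn{F}_{\mrm{fin}}(Z, A_i \otimes_A M) . \]
The key point here is that tensoring commutes with the direct sum $\opn{F}_{\mrm{fin}}(Z, M) \cong \boplus_{z \in Z} M$, and that $A_i \otimes_A(\boplus_z M) \cong \boplus_z (A_i \otimes_A M)$, which is again a module of finite-support functions. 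I would note these isomorphisms are natural in $M$ and compatible with the transition maps $A_{i+1} \otimes_A M \to A_i \otimes_A M$ as $i$ varies, so they assemble into an isomorphism of inverse systems.

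Next I would pass to the inverse limit: by the very definition \eqref{eqn:11} of completion,
\[ \Lambda_{\a} \opn{F}_{\mrm{fin}}(Z, M) = \lim_{\leftarrow i}\, \bigl( A_i \otimes_A \opn{F}_{\mrm{fin}}(Z, M) \bigr) \cong \lim_{\leftarrow i}\, \opn{F}_{\mrm{fin}}(Z, A_i \otimes_A M) . \]
Combining this with the bijection of Theorem \ref{thm:7}, I get an isomorphism
\[ \opn{F}_{\mrm{dec}}(Z, \what{M}) \iso \Lambda_{\a} \opn{F}_{\mrm{fin}}(Z, M) . \]
It then remains to check that this isomorphism is compatible with the canonical maps out of $\opn{F}_{\mrm{fin}}(Z, M)$, i.e.\ that the triangle formed by $\tau_{\opn{F}_{\mrm{fin}}(Z, M)} : \opn{F}_{\mrm{fin}}(Z, M) \to \Lambda_{\a} \opn{F}_{\mrm{fin}}(Z, M)$, the map $\opn{F}_{\mrm{fin}}(Z, M) \to \opn{F}_{\mrm{dec}}(Z, \what{M})$ induced by $\tau_M$, and the isomorphism above, commutes. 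This is a diagram chase: unravelling both composites on a delta-function $m \delta_z$ shows each sends it to the compatible system $(\pi_{M,i}(\tau_M(m)) \cdot \delta_z)_i$, using that $\pi_{M,i} \circ \tau_M = \theta_{M,i}$ from the definition of $\tau_M$. Functoriality in $M$ follows because every map in sight (the isomorphisms on each $A_i \otimes_A M$, the transition maps, and the bijection of Theorem \ref{thm:7}) is natural in $M$.

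The main obstacle, such as it is, is purely bookkeeping: one must be careful that the identification $A_i \otimes_A \opn{F}_{\mrm{fin}}(Z, M) \cong \opn{F}_{\mrm{fin}}(Z, A_i \otimes_A M)$ is the ``right'' one — the one compatible with the canonical homomorphism $\pi_{M,i}$ appearing in Theorem \ref{thm:7} — so that splicing in that theorem's bijection actually yields a map commuting with the structure homomorphisms. Everything else is formal: no finiteness or noetherian hypothesis is used beyond what is already packaged into the hypothesis that $\what{M}$ is $\a$-adically complete (which is what powers Theorem \ref{thm:7}). So I would keep the write-up short, essentially assembling the two displayed isomorphisms and remarking that the compatibility and functoriality are immediate from naturality.
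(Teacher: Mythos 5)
Your proposal is correct and follows exactly the same route as the paper: the key observation $A_i \otimes_A \opn{F}_{\mrm{fin}}(Z, M) \cong \opn{F}_{\mrm{fin}}(Z, A_i \otimes_A M)$, followed by passage to the inverse limit and an appeal to Theorem \ref{thm:7}. The paper's own proof is a two-line version of your argument, leaving the naturality and compatibility bookkeeping implicit.
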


The reason for the careful wording of the corollary is because 
$\opn{F}_{\mrm{dec}}(Z, \what{M})$ might fail to be $\a$-adically complete.
Cf.\ Example \ref{exa:7} and Corollary \ref{cor:18}.

\begin{proof}
Since there is a canonical isomorphism
\[ A_i \otimes_A \opn{F}_{\mrm{fin}}(Z, M) \cong 
\opn{F}_{\mrm{fin}}(Z, A_i \otimes_A M) \]
for every $i$, this follows from Theorem \ref{thm:7}.
\end{proof}

\begin{dfn}
Let $M$ be an $A$-module, and let 
$\{ m_z \}_{z \in Z}$ be a collection of elements of $M$.
Assume $A$ and $M$ are  $\a$-adically complete.  We say the
collection $\{ m_z \}_{z \in Z}$ {\em $\a$-adically generates} $M$ if for every
element $m \in M$ there exists some decaying function
$g : Z \to A$ such that 
\[ m = \sum_{z \in Z} g(z) m_z . \]
\end{dfn}

Here is a version of the Nakayama Lemma.

\begin{thm} \label{thm:8}
Let $M$ be an $A$-module, and let 
$\{ m_z \}_{z \in Z}$ be a collection of elements of $M$.
Assume $A$ and $M$ are  $\a$-adically complete.  
Write $M_0 := A_0 \otimes_A M$, and let
$\pi_0 : M \to M_0$ be the canonical homomorphism. 
Then the two conditions below are equivalent.
\begin{enumerate}
\rmitem{i} The collection $\{ \pi_0(m_z) \}_{z \in Z}$ generates the
$A_0$-module
$M_0$.
\rmitem{ii} The collection $\{ m_z \}_{z \in Z}$ $\a$-adically generates $M$.
\end{enumerate}
\end{thm}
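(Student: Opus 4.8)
The implication (ii) $\Rightarrow$ (i) is the easy direction: if $m \in M_0$, lift it to $M$, write it $\a$-adically in terms of the $m_z$, and apply $\pi_0$; since $\pi_0$ kills $\a M$, only the finitely many terms with $g(z) \notin \a$ survive, and these express $m$ as an $A_0$-combination of the $\pi_0(m_z)$. So I would dispose of that first in a line or two.

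The substance is (i) $\Rightarrow$ (ii), and the natural approach is a successive-approximation argument, exactly the kind used to prove the classical complete Nakayama lemma but now bookkeeping the decay. Fix $m \in M$. I would construct, by induction on $i$, decaying functions $g_i : Z \to A$ with finite support and an element $m_i := m - \sum_{z} g_i(z) m_z$ lying in $\a^{i+1} M$, such that $g_{i+1} - g_i$ takes values in $\a^{i+1}$ (so that the $g_i$ converge to a decaying function $g$ in $\opn{F}_{\mrm{dec}}(Z, A)$). The base step $i = 0$: by (i) the residues $\pi_0(m_z)$ generate $M_0$, so $\pi_0(m)$ is a finite $A_0$-combination; lift the coefficients to $A$, call the resulting finite-support function $g_0$, and then $m - \sum g_0(z) m_z \in \opn{Ker}(\pi_0) = \a M$. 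The inductive step: given $m_i \in \a^{i+1} M$, write $m_i = \sum_{k} a_k n_k$ with $a_k \in \a^{i+1}$ and $n_k \in M$; apply (i) again to each $\pi_0(n_k)$ to get a finite-support function, multiply its coefficients by $a_k$, sum over $k$, and add the result to $g_i$ to obtain $g_{i+1}$; the correction has values in $\a^{i+1}$, and $m_{i+1} = m - \sum g_{i+1}(z) m_z \in \a^{i+2} M$ because we have killed $m_i$ modulo $\a^{i+2} M$.

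It remains to check that the limiting function $g := \lim_i g_i$ is decaying and that $\sum_z g(z) m_z = m$. For the decay: for each $j$, the function $g_j$ has finite support, say $Z_j$, and $g - g_j$ has all values in $\a^{j+1}$; hence $\{ z : \opn{ord}_\a(g(z)) \le j \} \subseteq Z_j$ is finite, so $g \in \opn{F}_{\mrm{dec}}(Z, A)$ and the series $\sum_z g(z) m_z$ converges in $M$ by Corollary \ref{cor:12} (using that $A$ and $M$ are complete). For the value: $m - \sum_z g(z) m_z = (m - \sum_z g_i(z) m_z) - \sum_z (g(z) - g_i(z)) m_z = m_i - (\text{element of } \a^{i+1} M)$, which lies in $\a^{i+1} M$ for every $i$; since $M$ is separated (being complete), this difference is $0$.

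The main obstacle is purely organizational: one must run the two bookkeeping indices — the approximation index $i$ and the decay index $j$ — so that they interact correctly, i.e. arrange that the correction at stage $i$ has order $\ge i+1$ so the partial sums have finite (hence controlled) support at each finite stage and the limit is genuinely decaying. There is no deep input beyond completeness, separatedness, and the convergence statement of Corollary \ref{cor:12}; the care is all in making the induction hypothesis strong enough (tracking both the residue generation from (i) at every stage and the order of the increments) that the limiting $g$ lands in $\opn{F}_{\mrm{dec}}(Z, A)$ rather than merely in $\opn{F}(Z, A)$.
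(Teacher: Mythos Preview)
Your proof is correct but takes a different route from the paper. The paper rephrases both conditions as surjectivity statements: it introduces the homomorphism $\phi : \opn{F}_{\mrm{dec}}(Z, A) \to M$ of Corollary~\ref{cor:12} and the induced maps $\phi_i : \opn{F}_{\mrm{fin}}(Z, A_i) \to M_i$, so that (ii) becomes ``$\phi$ is surjective'' and (i) becomes ``$\phi_0$ is surjective''. For (i) $\Rightarrow$ (ii) it then argues structurally: nilpotent Nakayama gives each $\phi_i$ surjective; the Mittag-Leffler argument (as in Proposition~\ref{prop:10}) gives $\lim_{\leftarrow i} \phi_i$ surjective; and Theorem~\ref{thm:7} identifies $\opn{F}_{\mrm{dec}}(Z, A)$ with $\lim_{\leftarrow i} \opn{F}_{\mrm{fin}}(Z, A_i)$, whence $\phi$ is surjective. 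Your argument instead constructs the decaying function $g$ directly by successive approximation, tracking at each stage both the residual error $m_i \in \a^{i+1} M$ and the order of the correction $g_{i+1} - g_i$. Your approach is more elementary --- it does not invoke Theorem~\ref{thm:7} or the inverse-limit machinery --- and makes the decay of $g$ completely explicit; the paper's approach is cleaner once that machinery is in place, and shows more transparently why the result is the ``complete'' analogue of the nilpotent Nakayama lemma. One small point worth making explicit in your write-up: when you assert that $\sum_z (g(z) - g_i(z)) m_z \in \a^{i+1} M$, you are using that $\a^{i+1} M$ is closed in $M$ (equivalently, pass to $M_i = M/\a^{i+1} M$ and observe that the image of the convergent sum is the finite sum $\sum_z \overline{g_i(z)}\,\overline{m_z}$); this is immediate but deserves a word.
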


\begin{proof}
Let
$\phi : \opn{F}_{\mrm{dec}}(Z, A) \to M$
be the homomorphism corresponding to the function $f : Z \to M$,
$f(z) := m_z$, as in Corollary \ref{cor:12}.
Then $\{ m_z \}_{z \in Z}$
$\a$-adically generates $M$ if and only if $\phi$ is surjective.

For every $i \geq 0$ let $M_i := A_i \otimes_A M$.
There is a commutative diagram
\begin{equation} \label{eqn:15}
\UseTips \xymatrix @C=5ex @R=5ex {
\opn{F}_{\mrm{dec}}(Z, A)
\ar[r]^(0.65){\phi}
\ar[d]
& 
M
\ar[d]
\\
\opn{F}_{\mrm{fin}}(Z, A_i)
\ar[r]^(0.65){\phi_i}
&
M_i
} 
\end{equation}
in which the vertical arrows are the surjections coming from the ring
homomorphisms
$A \to A_i$. For $i = 0$ we have
$\pi_0(m_z) = \phi_0(\delta_z) \in M_0$ for all $z \in Z$. Hence the collection 
$\{ \pi_0(m_z) \}_{z \in Z}$ generates the $A_0$-module $M_0$
if and only if $\phi_0$ is surjective. 
The implication (ii) $\Rightarrow$ (i) is now clear.

Now let us assume (i), namely that $\phi_0$ is surjective.
Since for every $i$ the ideal $\a / \a^{i+1} = \opn{Ker}(A_i \to A_0)$ is
nilpotent, the usual Nakayama Lemma (see \cite[Corollary II.3.1]{CA})
says that $\phi_i$ is surjective. 
Consider the commutative diagram
\[ \UseTips \xymatrix @C=9ex @R=5ex {
\opn{F}_{\mrm{dec}}(Z, A)
\ar[r]^(0.6){\phi}
\ar[d]
& 
M
\ar[d]^{\tau_M}
\\
\lim_{\leftarrow i}\, \opn{F}_{\mrm{fin}}(Z, A_i)
\ar[r]^(0.55){\lim_{\leftarrow i}\, \phi_i}
&
\lim_{\leftarrow i}\, M_i
} \]
gotten as the inverse limit of the sequences (\ref{eqn:15}). 
As in the proof of Proposition \ref{prop:10} one shows that the homomorphism
$\lim_{\leftarrow i}\, \phi_i$ is surjective. By Theorem \ref{thm:7}
the left vertical arrow is bijective; and by assumption $\tau_M$ is bijective.
It follows that $\phi$ is surjective.
\end{proof}

To end this section here are some remarks.

\begin{rem} \label{rem:3}
Suppose $A$ is complete.
There is a canonical pairing
\[ \opn{F}(Z, A) \times \opn{F}_{\mrm{dec}}(Z, A) \to 
A , \]
namely
\[ \bra{f, g} := \sum_{z \in Z} \, f(z)  g(z) . \]
If we put the discrete topology on 
$\opn{F}_{\mrm{dec}}(Z, A)$, and a suitable topology on 
$\opn{F}_{}(Z, A)$, then this becomes a perfect pairing
(i.e.\ it identifies each of these $A$-modules with the continuous
dual of the other).
 
Suppose $h : Y \to Z$ is a function. Then there is a ring
homomorphism
\[ h^* : \opn{F}_{}(Z, A) \to \opn{F}_{}(Y, A)  \]
called pullback, namely $h^*(f) = f \circ h$.
And there is an 
$\opn{F}_{}(Z, A)$-module homomorphism 
\[ h_* : \opn{F}_{\mrm{dec}}(Y, A) \to \opn{F}_{\mrm{dec}}(Z, A) , \]
which is
\[ h_*(g)(z) := \bosum_{y \in h^{-1}(z)} g(y) \in A . \]

In this way $\opn{F}_{\mrm{dec}}(Z, A)$ resembles the space 
$\mrm{L}^1(Z)$ from functional analysis, and $\opn{F}_{}(Z, A)$
resembles the space $\mrm{L}^{\infty}(Z)$.
\end{rem}

\begin{rem}
Suppose $\{ M_z \}_{z \in Z}$ is a collection of $\a$-adically separated
$A$-modules. By an obvious generalization of Definition
\ref{dfn:1}, we can form the decaying direct product
$\prod^{\mrm{dec}}_{z \in Z} M_z$, which is a submodule of
$\prod_{z \in Z} M_z$. 
In case $A$ is noetherian and complete, and all the modules $M_z$ are finitely
generated, one can show (just as in Theorem \ref{thm:2}) that the module 
$\prod^{\mrm{dec}}_{z \in Z} M_z$ 
is $\a$-adically complete, and it is flat if and only if all the modules
$M_z$ are flat.
\end{rem}

\section{Noetherian Rings and their Completions}
\label{sec:noeth}

In this section $A$ is a noetherian ring, and $\a$ is an ideal in it. 
The $\a$-adic completion of $A$ is $\what{A}$, and we write
$\what{\a} := \a \what{A}$, which is an ideal in $\what{A}$.
It is well-known that the ring $\what{A}$ is $\what{\a}$-adically complete,
flat over $A$, and for every $i \geq 0$ the canonical homomorphism
$A_i = A / \a^{i+1} \to \what{A} / \what{\a}^{i+1}$ is bijective.
It is also well-known that every finitely generated $\what{A}$-module is
$\what{\a}$-adically complete. We are of course allowing the case
$A = \what{A}$. See \cite[Section 10]{AM} or \cite[Section III.3]{CA}.

Let $M$ be an $\what{A}$-module. Since $\a^i M = \what{\a}^i M$ for every 
$i \geq 0$, it follows that 
$\Lambda_{\a} M = \Lambda_{\what{\a}} M$. 
So $M$ is $\a$-adically separated (resp.\ complete) if and only if it is
$\what{\a}$-adically separated (resp.\ complete). And when $M$ is separated
we have
$\opn{ord}_{\a, M} = \opn{ord}_{\what{\a}, M}$, so a
function $f : Z \to M$ is $\a$-adically decaying if and only if it is
$\what{\a}$-adically decaying.

\begin{lem} \label{lem:1}
Suppose $M$ is a finitely generated $\what{A}$-module, and $N$ is an
$\what{A}$-submodule of $M$. Then
\[ \opn{F}_{\mrm{dec}}(Z, N) = 
\opn{F}_{\mrm{dec}}(Z, M) \cap \opn{F}_{}(Z, N)  \]
as submodules of $\opn{F}(Z, M)$.
\end{lem}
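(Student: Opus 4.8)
The plan is to prove the nontrivial inclusion $\opn{F}_{\mrm{dec}}(Z, M) \cap \opn{F}(Z, N) \subseteq \opn{F}_{\mrm{dec}}(Z, N)$, since the reverse inclusion is immediate: a decaying function into $N$ is a fortiori a function into $M$, and it is decaying as a function into $M$ because $\opn{ord}_{\a, M}(n) \geq \opn{ord}_{\a, N}(n)$ only goes the wrong way — so actually even this direction needs a word. Wait: for $n \in N$ we have $\opn{ord}_{\a, M}(n) \geq \opn{ord}_{\a, N}(n)$, hence $\{ z \mid \opn{ord}_{\a, M}(f(z)) \leq i \} \subseteq \{ z \mid \opn{ord}_{\a, N}(f(z)) \leq i \}$, so finiteness of the latter gives finiteness of the former; that settles $\supseteq$. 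The real content is the opposite containment, where we must control $\opn{ord}_{\a, N}$ from above in terms of $\opn{ord}_{\a, M}$.

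So take $f : Z \to N$ which is decaying as a function into $M$; I must show it is decaying as a function into $N$. Fix $i$; I want $\{ z \in Z \mid \opn{ord}_{\a, N}(f(z)) \leq i \}$ finite. The key input is the Artin–Rees lemma: since $A$ is noetherian, $\what{A}$ is noetherian, $M$ is finitely generated over $\what{A}$, and $N \subseteq M$ is a submodule, there is an integer $c \geq 0$ such that for all $j \geq c$,
\[ \a^{j} M \cap N = \a^{j - c}\, (\a^{c} M \cap N) \subseteq \a^{j - c} N . \]
(Here I use $\a^k M = \what{\a}^k M$, so it does not matter whether I run Artin–Rees over $A$ or over $\what{A}$; the paper's preamble to this section records this.) Consequently, if $n \in N$ satisfies $\opn{ord}_{\a, M}(n) \geq j \geq c$, then $n \in \a^j M \cap N \subseteq \a^{j-c} N$, so $\opn{ord}_{\a, N}(n) \geq j - c$. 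Contrapositively, $\opn{ord}_{\a, N}(n) \leq i$ forces $\opn{ord}_{\a, M}(n) \leq i + c$ (once $i + c \geq c$, which always holds). Therefore
\[ \{ z \in Z \mid \opn{ord}_{\a, N}(f(z)) \leq i \} \subseteq \{ z \in Z \mid \opn{ord}_{\a, M}(f(z)) \leq i + c \} , \]
and the right-hand set is finite because $f$ is decaying into $M$. This proves $f \in \opn{F}_{\mrm{dec}}(Z, N)$, completing the inclusion $\subseteq$ and hence the lemma.

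The main obstacle, such as it is, is recognizing that Artin–Rees is exactly the tool that converts "high $\a$-adic order in the big module $M$" into "not-too-much-lower $\a$-adic order in the submodule $N$" with a uniform constant $c$ independent of the element — that uniformity is what makes the index shift $i \mapsto i + c$ work simultaneously for all $z \in Z$. A minor point to get right is that $N$ need not be finitely generated, but Artin–Rees does not require that: it needs $A$ (equivalently $\what A$) noetherian, $M$ finitely generated, and $N$ merely a submodule, which is precisely the hypothesis given. One should also note in passing that $N$, being a submodule of the $\a$-adically separated module $M$ (finitely generated over the noetherian ring $\what A$, hence $\what\a$-adically separated by Krull's intersection theorem), is itself $\a$-adically separated, so $\opn{F}_{\mrm{dec}}(Z, N)$ is defined in the sense of Definition \ref{dfn:1}.
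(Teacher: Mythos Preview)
Your proof is correct and follows essentially the same route as the paper: both arguments use the Artin--Rees lemma to obtain a uniform constant $c$ (the paper's $i_0$) bounding $\opn{ord}_{\a, M}(n) - \opn{ord}_{\a, N}(n)$, and then observe that this makes the decay conditions relative to $M$ and to $N$ equivalent. One small remark: since $\what{A}$ is noetherian and $M$ is finitely generated, the submodule $N$ is automatically finitely generated, so your caveat about Artin--Rees not requiring this is moot (though harmless).
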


\begin{proof}
Since 
$\a^{i} N \subset \a^{i} M$
for any $i \geq 0$, it follows that 
$\opn{ord}_{\a, N}(n) \leq \opn{ord}_{\a, M}(n)$
for any $n \in N$. 
By the Artin-Rees Lemma  (cf.\ \cite[Corollary III.3.1]{CA}), there is
some $i_0$ such that 
\[ N \cap (\a^{i_0 + i} M) \subset \a^{i} N \]
for all $i \geq 0$. Therefore for $n \in N$ we have
\[ \opn{ord}_{\a, M}(n) \leq \opn{ord}_{\a, N}(n) + i_0  . \]
We conclude that the $\a$-adic decay conditions with respect to $M$
and to $N$ are equivalent, for a function $f : Z \to N$. 
\end{proof}

Let us denote by $\cat{Mod}_{\mrm{f}} \what{A}$ the full subcategory of
$\cat{Mod} \what{A}$ consisting of finitely generated $\what{A}$-modules. The
subcategory $\cat{Mod}_{\mrm{f}} \what{A}$ is abelian (since $\what{A}$ is
noetherian). Note that 
$\cat{Mod}_{\mrm{f}} \what{A} \subset \cat{Mod}_{\mrm{sep}} \what{A}$.

\begin{lem} \label{lem:2}
For a given set $Z$, the functor 
\[ \opn{F}_{\mrm{dec}}(Z, -) : 
\cat{Mod}_{\mrm{f}} \what{A} \to \cat{Mod} \what{A} \]
is exact.
\end{lem}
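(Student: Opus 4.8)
The plan is to verify exactness by checking it on a short exact sequence $0 \to N \xrightarrow{\alpha} M \xrightarrow{\beta} M'' \to 0$ of finitely generated $\what{A}$-modules, and to use the fact that the functor $\opn{F}_{\mrm{dec}}(Z,-)$ is already known to be additive. Left-exactness is the easy half: the functor $\opn{F}_{\mrm{dec}}(Z,-)$ is a subfunctor of $\opn{F}(Z,-) \cong \prod_{z \in Z}(-)$, which is exact, so injectivity of $\opn{F}_{\mrm{dec}}(Z,\alpha)$ is automatic, and exactness in the middle — i.e.\ that a decaying function $f\colon Z \to M$ with $\beta \circ f = 0$ factors through $N$ — follows from Lemma \ref{lem:1}: such an $f$ takes values in $N \subseteq M$, hence lies in $\opn{F}_{\mrm{dec}}(Z,M) \cap \opn{F}(Z,N) = \opn{F}_{\mrm{dec}}(Z,N)$.

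The substantive point is surjectivity of $\opn{F}_{\mrm{dec}}(Z,\beta)\colon \opn{F}_{\mrm{dec}}(Z,M) \to \opn{F}_{\mrm{dec}}(Z,M'')$. Given a decaying function $g\colon Z \to M''$, I want to lift each value $g(z)$ to some $f(z) \in M$ with $\beta(f(z)) = g(z)$, in such a way that $f$ is again decaying. The naive lift (choose preimages arbitrarily) need not decay, so the lifting must be done with control on orders. Here is where the Artin--Rees Lemma enters, exactly as in Lemma \ref{lem:1}: applied to the submodule $N = \alpha(N) \subseteq M$, it gives an integer $i_0$ such that $N \cap \a^{i_0 + i} M \subseteq \a^i N$ for all $i$, and consequently the surjection $\beta$ satisfies $\beta(\a^i M) = \a^i M''$ together with the estimate that an element of $\a^i M''$ has a preimage in $\a^{i - i_0} M$ (for $i \geq i_0$). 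Concretely: if $\opn{ord}_{\a, M''}(g(z)) = i$, pick a preimage $f(z) \in M$ with $\opn{ord}_{\a, M}(f(z)) \geq i - i_0$ (and for the finitely many $z$ with $g(z) = 0$ set $f(z) := 0$). Then $\{z \mid \opn{ord}_{\a, M}(f(z)) \leq j\} \subseteq \{z \mid \opn{ord}_{\a, M''}(g(z)) \leq j + i_0\}$, which is finite because $g$ decays; hence $f \in \opn{F}_{\mrm{dec}}(Z, M)$, and by construction $\beta \circ f = g$.

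The main obstacle — the only place anything nontrivial happens — is obtaining the order estimate "$x \in \a^i M'' \Rightarrow x$ has a preimage in $\a^{i - i_0} M$" from Artin--Rees; once that is in hand, the decay bookkeeping is the same shifting argument already used in Lemma \ref{lem:1}. One small care: the preimage $f(z)$ should be genuinely chosen so that $\opn{ord}_{\a,M}(f(z))$ is large, not merely that \emph{some} preimage in $\a^{i-i_0}M$ exists; this is fine since we simply take $f(z)$ to be such an element. Also worth noting is that finite generation of $M$ (hence of $N$) is essential precisely because Artin--Rees requires it; this is why the functor is only asserted exact on $\cat{Mod}_{\mrm{f}} \what{A}$ and not on all of $\cat{Mod} \what{A}$.
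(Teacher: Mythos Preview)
Your argument is correct and follows the same outline as the paper's proof: left-exactness is easy, exactness in the middle is Lemma \ref{lem:1}, and the real content is surjectivity via order-controlled lifting. However, you have overcomplicated the surjectivity step. The equality $\beta(\a^i M) = \a^i M''$ holds for \emph{any} surjection of $A$-modules, with no Artin--Rees needed: if $x = \sum a_j m''_j \in \a^i M''$ with $a_j \in \a^i$, lift each $m''_j$ to $m_j \in M$ and take $\sum a_j m_j \in \a^i M$. So you may lift $g(z)$ to $f(z)$ with $\opn{ord}_{\a,M}(f(z)) \geq \opn{ord}_{\a,M''}(g(z))$ exactly, not merely $\geq \opn{ord}_{\a,M''}(g(z)) - i_0$; this is what the paper does. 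Artin--Rees is genuinely needed only inside Lemma \ref{lem:1}, for the middle term. One small slip: there are not ``finitely many $z$ with $g(z)=0$'' --- typically there are uncountably many --- but setting $f(z):=0$ there is of course still the right move.
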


\begin{proof}
Consider an exact sequence
\[ 0 \to M' \xar{\phi} M \xar{\psi} M'' \to 0 \]
of finitely generated $\what{A}$-modules. We want to show that the sequence
\[ 0 \to \opn{F}_{\mrm{dec}}(Z, M') \xar{\phi} 
\opn{F}_{\mrm{dec}}(Z, M) \xar{\psi} 
\opn{F}_{\mrm{dec}}(Z, M'') \to 0 \]
is also exact. 

Since $\psi(M) = M''$, it follows that  
$\psi(\a^i M) = \a^i M''$ for all $i$.
Take any 
$f \in \opn{F}_{\mrm{dec}}(Z, M'')$.
For any $z \in Z$ we can lift $f(z) \in M''$ to some element 
$g(z) \in M$, such that
$\opn{ord}_{\a, M}(g(z)) \geq \opn{ord}_{\a, M''}(f(z))$.
We get a decaying function $g : Z \to M$ lifting $f$.
So we have exactness at $\opn{F}_{\mrm{dec}}(Z, M'')$. 

Exactness at $\opn{F}_{\mrm{dec}}(Z, M)$ is by Lemma \ref{lem:1}, and 
exactness at $\opn{F}_{\mrm{dec}}(Z, M')$ is trivial.
\end{proof}

\begin{lem} \label{lem:3}
Let $M$ be a finitely generated $\what{A}$-module. Then the canonical
homomorphism
\[ M \otimes_{\what{A}} \opn{F}_{\mrm{dec}}(Z, \what{A}) \to 
\opn{F}_{\mrm{dec}}(Z, M)  \]
is bijective. 
\end{lem}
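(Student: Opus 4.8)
The plan is to reduce the assertion to the previously established exactness and free-module cases by a standard two-step argument: first verify the claim when $M$ is free of finite rank, and then pass to an arbitrary finitely generated $\what{A}$-module via a finite presentation, using the exactness of $\opn{F}_{\mrm{dec}}(Z, -)$ from Lemma \ref{lem:2} and the right-exactness of $M \otimes_{\what{A}} -$. I would first pin down the canonical homomorphism in the statement: it sends $m \otimes g$ (for $m \in M$, $g \in \opn{F}_{\mrm{dec}}(Z, \what{A})$) to the function $z \mapsto g(z) \cdot m$, and more generally it is the $\what{A}$-linearization of the obvious map $M \times \opn{F}_{\mrm{dec}}(Z, \what{A}) \to \opn{F}_{\mrm{dec}}(Z, M)$. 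One should check this is well defined, i.e.\ that $z \mapsto g(z) m$ really is decaying; this is clear since $\opn{ord}_{\a,M}(g(z) m) \geq \opn{ord}_{\a,\what{A}}(g(z))$, so finiteness of the sets in Definition \ref{dfn:1} is inherited.

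For the base case $M = \what{A}^{\,n}$, both sides commute with finite direct sums: $\opn{F}_{\mrm{dec}}(Z, \what{A}^{\,n}) \cong \opn{F}_{\mrm{dec}}(Z, \what{A})^n$ (a function into a finite product is decaying iff each component is), and likewise $\what{A}^{\,n} \otimes_{\what{A}} \opn{F}_{\mrm{dec}}(Z, \what{A}) \cong \opn{F}_{\mrm{dec}}(Z, \what{A})^n$. One checks the canonical map respects these identifications and is therefore bijective in this case.

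For general finitely generated $M$, since $\what{A}$ is noetherian, choose a presentation $\what{A}^{\,m} \to \what{A}^{\,n} \to M \to 0$. Applying $- \otimes_{\what{A}} \opn{F}_{\mrm{dec}}(Z, \what{A})$ gives an exact sequence (right-exactness of tensor product), and applying $\opn{F}_{\mrm{dec}}(Z, -)$ to $\what{A}^{\,m} \to \what{A}^{\,n} \to M \to 0$ gives another exact sequence by Lemma \ref{lem:2} (the functor is exact on $\cat{Mod}_{\mrm{f}} \what{A}$, in particular right exact). The canonical homomorphism furnishes a morphism between these two exact sequences, which by the base case is an isomorphism on the two left terms $\what{A}^{\,m}$ and $\what{A}^{\,n}$. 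A five-lemma-style diagram chase then forces the map on $M$ to be an isomorphism as well.

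The only genuinely delicate point is the appeal to Lemma \ref{lem:2}: that lemma requires all three modules in the short exact sequence to be finitely generated, and its proof in turn relies on Lemma \ref{lem:1} (Artin--Rees) to guarantee that the decay condition behaves well with respect to submodules. Since $\what{A}$ is noetherian, the kernel and image appearing in the presentation are automatically finitely generated, so there is no obstruction here — but it is worth stating explicitly that this is where noetherianness of $\what{A}$ is used. Everything else is routine diagram chasing.
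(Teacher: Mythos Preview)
Your proposal is correct and follows essentially the same route as the paper: choose a finite free presentation of $M$, use right-exactness of the tensor product for the top row and Lemma~\ref{lem:2} for the bottom row, note that the comparison map is bijective on finite free modules, and conclude by a diagram chase. The paper is terser (it does not spell out the canonical map or the role of noetherianness), but the argument is the same.
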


\begin{proof}
We use the standard trick of finite free presentations. 
Choose some finite presentation of $M$; namely an exact sequence
$Q \to P \to M \to 0$, where $P$ and $Q$ are finitely generated free 
$\what{A}$-modules. There is an induced commutative diagram
\[ \UseTips \xymatrix @C=5ex @R=5ex {
Q \otimes_{\what{A}} \opn{F}_{\mrm{dec}}(Z, \what{A})
\ar[r]
\ar[d]^{\phi_Q}
&
P \otimes_{\what{A}} \opn{F}_{\mrm{dec}}(Z, \what{A})
\ar[r]
\ar[d]\ar[d]^{\phi_P}
&
M \otimes_{\what{A}} \opn{F}_{\mrm{dec}}(Z, \what{A})
\ar[r]
\ar[d]\ar[d]^{\phi_M}
&
0
\\
\opn{F}_{\mrm{dec}}(Z, Q)
\ar[r]
&
\opn{F}_{\mrm{dec}}(Z, P)
\ar[r]
&
\opn{F}_{\mrm{dec}}(Z, M)
\ar[r]
&
0 \ . 
} \]
The top row is exact because of right-exactness of the tensor product; and the
bottom row is exact by Lemma \ref{lem:2}.
The homomorphisms $\phi_P$ and $\phi_Q$ are bijective since $P$ and $Q$ are
finite rank free modules. It follows that $\phi_M$ is also bijective.
\end{proof}

Here is the main result of this section. Observe that it refers only to the
complete ring $\what{A}$.

\begin{thm} \label{thm:2}
Let $\what{A}$ be a noetherian ring, $\what{\a}$-adically complete
with respect to some ideal $\what{\a}$. Let $Z$ be any set.
Then\tup{:}
\begin{enumerate}
\item For any $i \geq 0$ the canonical homomorphism
\[ A_i \otimes_{\what{A}} \opn{F}_{\mrm{dec}}(Z, \what{A}) \to 
\opn{F}_{\mrm{fin}}(Z, A_i) \]
is bijective. Here $A_i := \what{A} / \what{\a}^{i+1}$.
\item The $\what{A}$-module $\opn{F}_{\mrm{dec}}(Z, \what{A})$ is flat and
$\what{\a}$-adically complete. 
\end{enumerate}
\end{thm}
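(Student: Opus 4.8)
The plan is to reduce everything to Lemmas \ref{lem:1}--\ref{lem:3} and to Theorem \ref{thm:7}, all applied with the complete ring $\what{A}$ itself as base ring and $\what{\a}$ as the ideal; part (1) comes first, since it feeds into part (2). For part (1), I would apply Lemma \ref{lem:3} to the finitely generated $\what{A}$-module $M := A_i = \what{A} / \what{\a}^{i+1}$, obtaining a canonical isomorphism $A_i \otimes_{\what{A}} \opn{F}_{\mrm{dec}}(Z, \what{A}) \iso \opn{F}_{\mrm{dec}}(Z, A_i)$. Since $\what{\a}^{i+1}$ annihilates $A_i$, Example \ref{exa:1} identifies $\opn{F}_{\mrm{dec}}(Z, A_i)$ with $\opn{F}_{\mrm{fin}}(Z, A_i)$; chasing an elementary tensor $\bar{a} \otimes f$ (which maps to $z \mapsto \bar{a}\, \pi_{\what{A}, i}(f(z))$) shows that the composite is exactly the canonical homomorphism in the statement, so it is bijective.

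For the flatness assertion in part (2), I would use the standard criterion that over a noetherian ring a module is flat as soon as tensoring with it preserves the injectivity of every inclusion of finitely generated modules (equivalently, $\opn{Tor}_1$ against every cyclic module vanishes). So let $M' \subseteq M$ be an inclusion of finitely generated $\what{A}$-modules. By Lemma \ref{lem:3} applied to $M'$ and to $M$, the induced map $M' \otimes_{\what{A}} \opn{F}_{\mrm{dec}}(Z, \what{A}) \to M \otimes_{\what{A}} \opn{F}_{\mrm{dec}}(Z, \what{A})$ is identified with the inclusion $\opn{F}_{\mrm{dec}}(Z, M') \inj \opn{F}_{\mrm{dec}}(Z, M)$ of submodules of $\opn{F}(Z, M)$ supplied by Lemma \ref{lem:1}, hence is injective. (Equivalently, Lemmas \ref{lem:2} and \ref{lem:3} together say that $M \mapsto M \otimes_{\what{A}} \opn{F}_{\mrm{dec}}(Z, \what{A})$ is exact on $\cat{Mod}_{\mrm{f}} \what{A}$.) Therefore $\opn{F}_{\mrm{dec}}(Z, \what{A})$ is $\what{A}$-flat.

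For the completeness assertion in part (2), since $\what{A}$ is $\what{\a}$-adically complete its $\what{\a}$-adic completion is $\what{A}$ itself and is complete, so Theorem \ref{thm:7} applies with $\what{A}$ in the role of $M$ and shows that the canonical map $\opn{F}_{\mrm{dec}}(Z, \what{A}) \to \lim_{\leftarrow i}\, \opn{F}_{\mrm{fin}}(Z, A_i)$ is bijective. On the other hand, by part (1) the isomorphism $A_i \otimes_{\what{A}} \opn{F}_{\mrm{dec}}(Z, \what{A}) \cong \opn{F}_{\mrm{fin}}(Z, A_i)$ is compatible with the structure maps of the two inverse systems, so it gives $\Lambda_{\what{\a}} \opn{F}_{\mrm{dec}}(Z, \what{A}) = \lim_{\leftarrow i}\, \bigl( A_i \otimes_{\what{A}} \opn{F}_{\mrm{dec}}(Z, \what{A}) \bigr) \cong \lim_{\leftarrow i}\, \opn{F}_{\mrm{fin}}(Z, A_i)$. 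The point left to check is that these two identifications agree with the completion homomorphism $\tau_{\opn{F}_{\mrm{dec}}(Z, \what{A})}$ --- both send a decaying function $f$ to the compatible system $(\pi_{\what{A}, i} \circ f)_{i}$ --- and this is a routine unwinding of definitions. It then follows that $\tau_{\opn{F}_{\mrm{dec}}(Z, \what{A})}$ is bijective, i.e.\ $\opn{F}_{\mrm{dec}}(Z, \what{A})$ is $\what{\a}$-adically complete.

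Within this theorem I expect no serious obstacle: the genuine work (Artin--Rees and the finite-free-presentation trick) has already been done in Lemmas \ref{lem:1}--\ref{lem:3}, and the completeness input comes from Theorem \ref{thm:7}. What needs care is purely bookkeeping --- verifying that each canonical map in the statement is identified with the map produced by the relevant lemma, and invoking the flatness criterion in a form valid over an arbitrary noetherian ring rather than only a local one. Of these, I expect the compatibility check with $\tau$ in the completeness part to demand the most attention.
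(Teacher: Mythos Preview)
Your proposal is correct and follows essentially the same route as the paper: part (1) is Lemma \ref{lem:3} applied to $M=A_i$ (with the identification $\opn{F}_{\mrm{dec}}(Z,A_i)=\opn{F}_{\mrm{fin}}(Z,A_i)$), flatness in part (2) comes from Lemmas \ref{lem:2}--\ref{lem:3} via the noetherian flatness criterion, and completeness is part (1) combined with Theorem \ref{thm:7} for $M=\what{A}$. Your write-up is somewhat more explicit about the compatibility checks than the paper's, but the underlying argument is identical.
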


\begin{proof}
(1) This is true by Lemma  \ref{lem:3}, with $M := A_i$.

\medskip \noindent
(2) Since $\what{A}$ is noetherian, the $\what{A}$-module 
$\opn{F}_{\mrm{dec}}(Z, \what{A})$ is flat if and only if the functor
$- \otimes_{\what{A}} \opn{F}_{\mrm{dec}}(Z, \what{A})$ is exact on 
$\cat{Mod}_{\mrm{f}} \what{A}$. The latter is true by Lemmas \ref{lem:2} and
\ref{lem:3}.

As for completeness, combining part (1) above with Theorem \ref{thm:7}
(for the module $M := \what{A}$) we see that the canonical homomorphism
\[ \tau_{\opn{F}_{\mrm{dec}}(Z, \what{A})} :
\opn{F}_{\mrm{dec}}(Z, \what{A}) \to 
\lim_{\leftarrow i}\, \bigl( A_i \otimes_{\what{A}} 
\opn{F}_{\mrm{dec}}(Z, \what{A}) \bigr) \]
is bijective.
\end{proof}

Here are several corollaries to Theorem \ref{thm:2}.

\begin{cor} \label{cor:3}
Let $M$ be any $A$-module. Its $\a$-adic completion $\what{M}$
is $\a$-adically complete.
\end{cor}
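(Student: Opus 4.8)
The plan is to reduce the statement to Theorem \ref{thm:2} by presenting the completion $\what{M}$ as a quotient of an $\a$-adically free module, and then use that the completion functor preserves surjections (Proposition \ref{prop:10}) together with the fact that a quotient of a complete module by the image of a surjection-of-completions is again complete. First I would choose a surjection $\opn{F}_{\mrm{fin}}(Z, A) \surj M$ from a free $A$-module; this exists for any $A$-module $M$, with $Z$ a generating set. Applying the completion functor $\Lambda_{\a}$ and using Proposition \ref{prop:10}, I get a surjection $\Lambda_{\a} \opn{F}_{\mrm{fin}}(Z, A) \surj \what{M}$.

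Next I would identify the source. By Corollary \ref{cor:7} (applied to the module $M := A$, whose completion $\what{A}$ is $\a$-adically complete by Theorem \ref{thm:2}(2) — note $\what{\a}^i \what{A} = \a^i \what{A}$, so this is legitimate here), there is an isomorphism
\[
\Lambda_{\a} \opn{F}_{\mrm{fin}}(Z, A) \cong \opn{F}_{\mrm{dec}}(Z, \what{A}) .
\]
By Theorem \ref{thm:2}(2) the right-hand module is $\a$-adically complete (again via $\what{\a}$-adic = $\a$-adic completeness for $\what{A}$-modules). So I have a surjection $\pi : P \surj \what{M}$ with $P := \opn{F}_{\mrm{dec}}(Z, \what{A})$ an $\a$-adically complete $A$-module.

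Finally I would deduce completeness of $\what{M}$ from completeness of $P$ via the surjection $\pi$. The key point is that $\what{M}$ is also $\a$-adically separated — this is exactly Corollary \ref{cor:17}. For a separated module that is a quotient of a complete module by a submodule $K := \opn{Ker}(\pi)$, one checks completeness directly: given a Cauchy sequence (equivalently, a compatible system in $\lim_{\leftarrow i} \what{M}/\a^{i+1}\what{M}$), lift it through the surjections $P/\a^{i+1}P \surj \what{M}/\a^{i+1}\what{M}$ (these are surjective since $\pi$ is), choosing lifts that form a Cauchy system in $P$ — possible because one can correct successive lifts by elements of $\a^{i+1}P$ — take its limit in $P$, and push down. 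Alternatively, and more cleanly, I would invoke the snake-lemma / Mittag-Leffler argument from the proof of Proposition \ref{prop:10}: applying $\Lambda_{\a}$ to $0 \to K \to P \to \what{M} \to 0$ gives that $\Lambda_{\a}(\pi) : \Lambda_{\a}P \to \Lambda_{\a}\what{M}$ is surjective, and since $P$ is complete, $\Lambda_{\a} P = P$; combined with the commutative square relating $\tau_P$, $\tau_{\what{M}}$ and $\pi$, surjectivity of $\tau_{\what{M}}$ follows, and injectivity is Corollary \ref{cor:17}.

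The main obstacle is the very last step: extracting completeness of a quotient from completeness of the source is not formal for general (non-noetherian, infinitely generated) modules, precisely because $\Lambda_{\a}$ is not exact. What makes it work is the combination of Proposition \ref{prop:10} (surjectivity is preserved) and Corollary \ref{cor:17} (the completion is automatically separated); I would make sure to lean on both rather than attempting a naive Cauchy-sequence chase, which is where one is tempted to assume $K$ is closed in $P$ — a subtlety the paper's earlier examples show can fail. Everything else (existence of the free presentation, the identification via Corollary \ref{cor:7}, applicability of Theorem \ref{thm:2} to $\what{A}$) is routine.
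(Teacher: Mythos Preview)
Your proposal is correct and follows the paper's overall strategy: choose a free presentation $Q := \opn{F}_{\mrm{fin}}(Z, A) \surj M$, complete it via Proposition~\ref{prop:10} to a surjection $\what{Q} \surj \what{M}$, and identify $\what{Q} \cong \opn{F}_{\mrm{dec}}(Z, \what{A})$ as an $\a$-adically complete module using Corollary~\ref{cor:7} and Theorem~\ref{thm:2}(2). The only difference is in the finishing move. The paper descends to each level $i$ and uses the criterion of Theorem~\ref{thm:6}: from the commutative square with $\tau_{Q,i}$ and $\tau_{M,i}$, surjectivity of $\tau_{Q,i}$ (which holds since $\what{Q}$ is complete) forces surjectivity of $\tau_{M,i}$, and then Theorem~\ref{thm:6} gives completeness of $\what{M}$. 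Your argument (b) instead works directly at the module level: the functoriality square for $\pi$ and $\tau$, together with $\tau_{\what{Q}}$ an isomorphism and $\Lambda_{\a}(\pi)$ surjective, yields $\tau_{\what{M}}$ surjective, while injectivity is Corollary~\ref{cor:17}. Your route is slightly more direct, bypassing the level-by-level criterion; the paper's route makes the role of Theorem~\ref{thm:6} explicit. Both are valid, and your caution about the last step is well placed: as Example~\ref{exa:5} shows, a quotient of a complete module need not be complete, and it is precisely the separatedness of $\what{M}$ (Corollary~\ref{cor:17}) that closes the gap.
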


\begin{proof}
Choose any surjection 
$\phi : \opn{F}_{\mrm{fin}}(Z, A) \to M$, where $Z$ is some set, and
write $Q := \opn{F}_{\mrm{fin}}(Z, A)$.
By Proposition \ref{prop:10} the homomorphism
$\what{\phi} : \what{Q} \to \what{M}$
is surjective. Hence for every $i \geq 0$ we get a commutative diagram
\[ \UseTips \xymatrix @C=8ex @R=5ex {
A_i \otimes_A Q
\ar[r]^{\tau_{Q, i}}
\ar[d]_{\opn{id}_{A_i} \otimes\, \phi}
& 
A_i \otimes_A \what{Q}
\ar[d]
\ar[d]^{\opn{id}_{A_i} \otimes\, \what{\phi}}
\\
A_i \otimes_A M
\ar[r]^{\tau_{M, i}}
&
A_i \otimes_A \what{M}
} \]
with surjective vertical arrows. 
By Corollary \ref{cor:7} and Theorem \ref{thm:2}(2) the module $\what{Q}$ is
$\a$-adically complete, and hence
by Theorem \ref{thm:6} the homomorphisms $\tau_{Q, i}$ is surjective.
It follows that $\tau_{M, i}$ is also surjective, for every $i$. 
Again using Theorem \ref{thm:6} we conclude that $\what{M}$ is complete.
\end{proof}

\begin{cor} \label{cor:16}
Let $B$ be a ring, and let $\mfrak{b}$ be a finitely generated ideal in it.
Given any $B$-module $M$, its $\mfrak{b}$-adic completion
$\Lambda_{\mfrak{b}} M$ is $\mfrak{b}$-adically complete. 
\end{cor}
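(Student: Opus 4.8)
The plan is to reduce the statement for the possibly non-noetherian ring $B$ to Corollary \ref{cor:3}, which handles the noetherian case. The key observation is that $\mfrak{b}$-adic completion of a $B$-module depends only on the ring modulo powers of $\mfrak{b}$ together with the noetherian approximation of $\mfrak{b}$ inside $B$. More precisely, since $\mfrak{b}$ is finitely generated, choose generators $b_1, \dots, b_n$ of $\mfrak{b}$, and let $A \subset B$ be the subring generated over $\mbb{Z}$ (or over the prime subring) by $b_1, \dots, b_n$. Then $A$ is noetherian by the Hilbert basis theorem, and $\a := (b_1, \dots, b_n) A$ is an ideal of $A$ with $\a B = \mfrak{b}$.

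The first step is to check that for every $i \geq 0$ one has $\mfrak{b}^{i} = \a^{i} B$ and, more importantly, that the $B$-module quotients appearing in the inverse system computing $\Lambda_{\mfrak{b}} M$ agree with those computing $\Lambda_{\a} M$ when $M$ is regarded as an $A$-module. Indeed, $\mfrak{b}^{i+1} M = (\a^{i+1} B) M = \a^{i+1} M$, so $B_i \otimes_B M \cong M / \mfrak{b}^{i+1} M = M / \a^{i+1} M \cong A_i \otimes_A M$ as abelian groups, compatibly in $i$. Passing to the inverse limit, $\Lambda_{\mfrak{b}} M \cong \Lambda_{\a} M$ as abelian groups, and in fact this identification is compatible with the $B$-module structure on the completion, since the $B$-action on each $B_i \otimes_B M$ is the one induced from $B$, and restricts to the $A$-action. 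Thus the underlying abelian group of $\Lambda_{\mfrak{b}} M$, together with its descending filtration by the kernels $\opn{Ker}(\pi_{M,i})$, is exactly that of $\Lambda_{\a} M$.

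The second step: by Corollary \ref{cor:3} applied to the noetherian ring $A$, the ideal $\a$, and the $A$-module $M$, the $A$-module $\Lambda_{\a} M$ is $\a$-adically complete. By Theorem \ref{thm:6} (the equivalence (i) $\Leftrightarrow$ (iii)), this is equivalent to the equality $\opn{Ker}(\pi_{M,i}) = \a^{i+1} \Lambda_{\a} M$ for all $i$. But the kernels $\opn{Ker}(\pi_{M,i})$ depend only on the inverse system $\{ M/\mfrak{b}^{i+1} M \}$, hence are the same whether we view things over $A$ or over $B$; and $\a^{i+1} \Lambda_{\a} M = \mfrak{b}^{i+1} \Lambda_{\mfrak{b}} M$ since $\mfrak{b} = \a B$ and the completion is a $B$-module. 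Therefore $\opn{Ker}(\pi_{M,i}) = \mfrak{b}^{i+1} \Lambda_{\mfrak{b}} M$ for all $i$, and invoking Theorem \ref{thm:6} once more (now over $B$) gives that $\Lambda_{\mfrak{b}} M$ is $\mfrak{b}$-adically complete, as desired.

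The main obstacle I anticipate is making the compatibility of module structures in the first step fully precise: one must verify that the isomorphism $\Lambda_{\mfrak{b}} M \cong \Lambda_{\a} M$ intertwines the $B$-module structure (on the left) with the $A$-module structure (on the right, via restriction of scalars along $A \hookrightarrow B$), so that the filtration-theoretic criterion of Theorem \ref{thm:6} can legitimately be transported. This is essentially bookkeeping — the $B$-action on $M/\mfrak{b}^{i+1}M$ factors through $B/\mfrak{b}^{i+1}$, but since $\mfrak{b}$ is finitely generated by elements of $A$ the relevant submodule lattice $\{\mfrak{b}^{j} \Lambda_{\mfrak{b}} M\}$ is generated over $A$ anyway — but it is the one point that needs care rather than a one-line citation.
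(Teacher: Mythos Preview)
Your proposal is correct and follows essentially the same approach as the paper: reduce to the noetherian case via a noetherian ring $A$ mapping to $B$ whose ideal $\a$ generates $\mfrak{b}$, then invoke Corollary~\ref{cor:3}. The paper chooses $A = \mbb{Z}[t_1,\dots,t_n]$ with $t_i \mapsto b_i$, whereas you take the subring of $B$ generated by the $b_i$; both choices work for the same reason. The paper is slightly more economical at the end: once one knows that $\Lambda_{\mfrak{b}} N \cong \Lambda_{\a} N$ functorially for all $B$-modules $N$ (compatibly with the $\tau$-maps), it follows immediately that $\mfrak{b}$-adic completeness and $\a$-adic completeness coincide for $B$-modules, so your detour through Theorem~\ref{thm:6} is unnecessary, though not wrong.
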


\begin{proof}
Choose generators $b_1, \ldots, b_n$ of the ideal $\mfrak{b}$.
Consider the polynomial ring
$A := \mbb{Z}[t_1, \ldots, t_n]$, the ideal 
$\a := (t_1, \ldots, t_n) \subset A$, and the ring homomorphism
$f : A \to B$ defined by $f(t_i) := b_i$.
Then for any $B$-module $N$ there is a canonical isomorphism of $B$-modules
 $\Lambda_{\mfrak{b}} N \cong \Lambda_{\a} N$,
that commutes with the homomorphisms from $N$. Since $A$ is noetherian we know
from Corollary \ref{cor:3} that 
$N := \Lambda_{\a} M$ is $\a$-adically complete. 
\end{proof}

\begin{rem} \label{rem:2}
The assertions of Corollaries \ref{cor:3} and \ref{cor:16} are not new, yet
they seem to be virtually unknown. After we proved Theorem \ref{thm:2},
A.-M. Simon mentioned to us the book \cite{St}, and in Subsection 2.2.5 of that
book we located these assertions (in slightly different wording). We then
learned that Corollary \ref{cor:16} appeared much earlier as
\cite[Theorem 15]{Ml}. Note that our proof of Theorem \ref{thm:2},
involving the concept of decaying functions, is completely
new, and is not similar to the proofs in these cited works. 

Corollary \ref{cor:16} resembles \cite[Proposition III.14]{CA}. However a close
inspection reveals that these two assertions refer to distinct notions of
completion. See Example \ref{exa:7}, Corollary \ref{cor:18} and the 
discussion between them.
\end{rem}

\begin{cor} \label{cor:14}
Let $M$ be any $A$-module. Then 
the $A$-module $\opn{F}_{\mrm{dec}}(Z, \what{M})$ is $\a$-adically complete.
\end{cor}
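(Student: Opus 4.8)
The goal is to show that for an arbitrary $A$-module $M$, the $A$-module $\opn{F}_{\mrm{dec}}(Z, \what{M})$ is $\a$-adically complete. The plan is to reduce this to facts already proved about the complete ring $\what{A}$, via Corollary \ref{cor:3} and Theorem \ref{thm:2}. First I would invoke Corollary \ref{cor:3} to conclude that $\what{M}$ is $\a$-adically complete; set $N := \what{M}$, an $\a$-adically complete $A$-module, which carries a canonical $\what{A}$-module structure, and since $\a^i N = \what{\a}^i N$ the $\a$-adic and $\what{\a}$-adic notions (separatedness, completeness, decay, order) coincide on $N$, as noted at the start of this section.

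The next step is to realize $\opn{F}_{\mrm{dec}}(Z, N)$ as a quotient of the known complete module $\opn{F}_{\mrm{dec}}(Z, \what{A})$. Choose a surjection of $\what{A}$-modules $\psi : P \surj N$ with $P = \opn{F}_{\mrm{fin}}(W, \what{A})$ free for some set $W$. Since $\psi$ is surjective, $\psi(\a^i P) = \a^i N$ for every $i$, so (exactly as in the proof of Lemma \ref{lem:2}) every decaying function into $N$ lifts to a decaying function into $P$: that is, the induced map $\opn{F}_{\mrm{dec}}(Z, P) \to \opn{F}_{\mrm{dec}}(Z, N)$ is surjective. Now $\opn{F}_{\mrm{dec}}(Z, \opn{F}_{\mrm{fin}}(W, \what{A})) \cong \opn{F}_{\mrm{dec}}(Z \times W, \what{A})$ for the decay condition, so by Theorem \ref{thm:2}(2) (applied to the complete ring $\what{A}$ and the index set $Z \times W$) the module $\opn{F}_{\mrm{dec}}(Z, P)$ is flat and $\a$-adically complete.

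So we are reduced to the following general fact: if $Q$ is an $\a$-adically complete $A$-module and $Q \surj Q''$ is a surjection of $A$-modules with $Q''$ $\a$-adically separated, then $Q''$ is $\a$-adically complete. I would prove this by the criterion of Theorem \ref{thm:6}: writing $L := \opn{Ker}(Q \to Q'')$ and using that the surjection carries $\a^i Q$ onto $\a^i Q''$, one checks that $\tau_{Q'', i} : A_i \otimes_A Q'' \to A_i \otimes_A \what{Q''}$ fits into a square with the corresponding (surjective, by Theorem \ref{thm:6} applied to the complete $Q$) map $\tau_{Q,i}$ and surjective vertical maps $A_i \otimes_A Q \surj A_i \otimes_A Q''$ and $A_i \otimes_A \what{Q} \surj A_i \otimes_A \what{Q''}$ (the latter from Proposition \ref{prop:10} and right-exactness of $A_i \otimes_A -$), forcing $\tau_{Q'', i}$ to be surjective for all $i$; then Theorem \ref{thm:6} gives that $\what{Q''}$ is complete — and since $Q''$ is separated, $\tau_{Q''}$ is the injection of a separated module into its completion, which, combined with $\tau_{Q''}$ being surjective (as $\tau_{Q'',0}$ surjective plus the Nakayama-type inverse-limit argument of Corollary \ref{cor:3}), shows $Q''$ is complete. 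Applying this with $Q := \opn{F}_{\mrm{dec}}(Z, P)$ and $Q'' := \opn{F}_{\mrm{dec}}(Z, N) = \opn{F}_{\mrm{dec}}(Z, \what{M})$ — which is $\a$-adically separated by the Proposition following Definition \ref{dfn:1} — finishes the proof.

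The main obstacle, and the step requiring the most care, is the reduction fact about quotients: one must be vigilant that a surjection of modules need not induce a surjection on completions "compatibly" in the naive sense, and that $\a$-adic separatedness of the quotient is genuinely needed (without it the statement can fail). The rest — the lifting of decaying functions, the identification $\opn{F}_{\mrm{dec}}(Z, \opn{F}_{\mrm{fin}}(W, \what{A})) \cong \opn{F}_{\mrm{dec}}(Z \times W, \what{A})$, and the bookkeeping with $\a$ versus $\what{\a}$ — is routine given the machinery of Sections \ref{sec:comp}--\ref{sec:noeth}.
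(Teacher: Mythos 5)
Your overall strategy (realize $\opn{F}_{\mrm{dec}}(Z, \what{M})$ as a separated quotient of a known complete module, then prove that a separated quotient of a complete module is complete) is workable, but the argument as written contains a genuine gap. You claim the identification
\[
\opn{F}_{\mrm{dec}}\bigl(Z, \opn{F}_{\mrm{fin}}(W, \what{A})\bigr) \;\cong\; \opn{F}_{\mrm{dec}}(Z \times W, \what{A}),
\]
and then invoke Theorem \ref{thm:2}(2) to conclude this module is complete. That isomorphism is false: the left side is in general a \emph{proper} submodule of the right side. The point is that a decaying function $\til{f} : Z \times W \to \what{A}$ need not have each slice $\til{f}(z,-) : W \to \what{A}$ of \emph{finite} support --- its orders can simply go to infinity along $W$. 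The simplest counterexample is $Z$ a singleton: then $\opn{F}_{\mrm{dec}}(Z, \opn{F}_{\mrm{fin}}(W, \what{A})) = \opn{F}_{\mrm{fin}}(W, \what{A})$, which is a free module of infinite rank and is \emph{not} $\a$-adically complete, whereas $\opn{F}_{\mrm{dec}}(Z\times W, \what{A}) = \opn{F}_{\mrm{dec}}(W,\what{A})$ \emph{is} complete. So the module you are trying to surject from is not in fact complete, and the reduction collapses.

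The fix is to choose $P := \opn{F}_{\mrm{dec}}(W, \what{A})$, an $\a$-adically free module surjecting onto $N = \what{M}$ (which exists by Corollary \ref{cor:6}), rather than the underlying free module $\opn{F}_{\mrm{fin}}(W, \what{A})$. With this replacement one does have $\opn{F}_{\mrm{dec}}(Z, \opn{F}_{\mrm{dec}}(W, \what{A})) \cong \opn{F}_{\mrm{dec}}(Z \times W, \what{A})$ (checking the two decay conditions against each other is a straightforward finiteness argument), and the lifting of decaying functions along $P \surj N$ still goes through because surjections send $\a^i P$ onto $\a^i N$. Your ``reduction fact'' --- that a separated quotient of an $\a$-adically complete module is complete --- is correct and can be proved exactly along the lines you sketch, using the Mittag--Leffler exact sequence of Proposition \ref{prop:10} to identify $\Lambda_\a(Q/L) \cong Q/\ol{L}$. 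That said, you should be aware that the paper's own proof is considerably shorter and avoids any quotient argument: it applies Corollary \ref{cor:3} to get $\what{M}$ complete, then Corollary \ref{cor:7} to identify $\opn{F}_{\mrm{dec}}(Z, \what{M})$ with $\Lambda_\a \opn{F}_{\mrm{fin}}(Z, M)$, and then Corollary \ref{cor:3} a second time to get completeness of that completion. The observation that $\opn{F}_{\mrm{dec}}(Z, \what{M})$ is itself the $\a$-adic completion of a module makes the whole machinery of quotients unnecessary.
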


\begin{proof}
According to Corollary \ref{cor:3} the module $\what{M}$ is complete. 
By Corollary \ref{cor:7} we know that $\opn{F}_{\mrm{dec}}(Z, \what{M})$ is 
(canonically isomorphic to) the
$\a$-adic completion of $\opn{F}_{\mrm{fin}}(Z, M)$. Now use
Corollary \ref{cor:3} again to conclude that 
$\opn{F}_{\mrm{dec}}(Z, \what{M})$ is complete.
\end{proof}

\begin{cor}  \label{cor:11}
Let $Z$ be a set, let $M$ be an $\a$-adically complete $A$-module, and let
$f : Z \to M$ be any function. Then there is a
unique $A$-linear homomorphism
\[ \phi : \opn{F}_{\mrm{dec}}(Z, \what{A}) \to M \]
such that 
$\phi(\delta_z) = f(z)$
for all $z \in Z$. 
\end{cor}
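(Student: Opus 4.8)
The plan is to build $\phi$ by the universal-type construction already available from Corollary \ref{cor:12}, and then to extract the desired characterization in terms of the delta functions. First I would note that $\opn{F}_{\mrm{dec}}(Z, \what{A}) = \opn{F}_{\mrm{dec}}(Z, A)$ as $A$-modules whenever we only need the $A$-module structure — but in fact we want the $\what{A}$-action, so instead I would apply Corollary \ref{cor:12} directly with the complete ring $\what{A}$ in place of $A$ and with $M$ viewed as an $\what{A}$-module (which it canonically is, being $\a$-adically complete, hence $\what{\a}$-adically complete over $\what{A}$; see the discussion opening Section \ref{sec:noeth}). Applied to the given function $f : Z \to M$, Corollary \ref{cor:12} produces an $\what{A}$-linear — in particular $A$-linear — homomorphism
\[ \phi : \opn{F}_{\mrm{dec}}(Z, \what{A}) \to M, \quad \phi(g) := \sum_{z \in Z} g(z) f(z) . \]
Evaluating on a delta function $\delta_z \in \opn{F}_{\mrm{fin}}(Z, \what{A}) \subset \opn{F}_{\mrm{dec}}(Z, \what{A})$, we have $\delta_z(z) = 1$ and $\delta_z(z') = 0$ for $z' \neq z$, so the series collapses and $\phi(\delta_z) = f(z)$, as required.

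For uniqueness, suppose $\phi, \phi'$ are two $A$-linear homomorphisms agreeing on every $\delta_z$. Then they agree on the $A$-submodule $\opn{F}_{\mrm{fin}}(Z, \what{A})$ spanned by the $\delta_z$'s. To upgrade this to all of $\opn{F}_{\mrm{dec}}(Z, \what{A})$ I would use a continuity/convergence argument: any $g \in \opn{F}_{\mrm{dec}}(Z, \what{A})$ is, by Proposition \ref{prop:1} applied to the complete module $\what{A}$, the $\a$-adic limit of its finite truncations $g_i := \sum_{z \in Z_i} g(z)\delta_z$ for suitable finite $Z_i$; more precisely the sequence of partial sums converges to $g$ in $\opn{F}_{\mrm{dec}}(Z, \what{A})$, which is $\a$-adically separated. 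Since $\phi$ and $\phi'$ are $A$-linear they are $\a$-adically continuous, and they agree on each $g_i$, so $\phi(g) - \phi'(g) \in \a^{i+1} M$ for every $i$; as $M$ is separated, $\phi(g) = \phi'(g)$.

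The one point requiring a little care — and the place I expect to have to be most careful — is the convergence of the truncations $g_i \to g$ inside $\opn{F}_{\mrm{dec}}(Z, \what{A})$ in its own $\a$-adic topology, rather than just pointwise or in each $\opn{F}_{\mrm{fin}}(Z, A_i)$. Here the decay hypothesis is exactly what is needed: for each $i$ only finitely many $z$ have $\opn{ord}_{\a}(g(z)) \le i$, so choosing $Z_i$ to contain all of those, $g - g_i$ takes values in $\a^{i+1}\what{A}$, hence $g - g_i \in \opn{F}_{\mrm{dec}}(Z, \a^{i+1}\what{A}) = \a^{i+1} \opn{F}_{\mrm{dec}}(Z, \what{A})$ — the last equality being the only slightly nonobvious step, which for the complete noetherian ring $\what{A}$ follows from Theorem \ref{thm:2}(1) together with the description of $\opn{Ker}(\pi)$ in Theorem \ref{thm:6}, or can be seen directly. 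Alternatively, one can bypass the topology on $\opn{F}_{\mrm{dec}}(Z,\what{A})$ entirely by using Corollary \ref{cor:7}: $\opn{F}_{\mrm{dec}}(Z,\what{A})$ is an $\a$-adic completion of the free module $\opn{F}_{\mrm{fin}}(Z, A)$, and an $A$-linear map out of a completion into a complete module is determined by its restriction to the original module — which gives uniqueness immediately and even re-derives existence via the universal property of completion (using that $M$ is complete).
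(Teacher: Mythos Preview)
Your approach matches the paper's: existence via Corollary \ref{cor:12}, uniqueness via density of the $A$-span of the $\delta_z$'s together with continuity and separatedness of $M$.

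There is one slip in your first uniqueness argument. You write that $\phi,\phi'$ agree on ``the $A$-submodule $\opn{F}_{\mrm{fin}}(Z,\what{A})$ spanned by the $\delta_z$'s,'' but the $A$-span of $\{\delta_z\}$ is only (the image of) $\opn{F}_{\mrm{fin}}(Z,A)$, not $\opn{F}_{\mrm{fin}}(Z,\what{A})$. Your truncations $g_i = \sum_{z\in Z_i} g(z)\delta_z$ have coefficients $g(z)\in\what{A}$, so they lie in $\opn{F}_{\mrm{fin}}(Z,\what{A})$; since $\phi,\phi'$ are only assumed $A$-linear, you cannot pull out the $\what{A}$-scalars to conclude $\phi(g_i)=\phi'(g_i)$. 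The fix is either to further approximate each $g(z)$ by an element of $A$ (so that the truncation lands in $\opn{F}_{\mrm{fin}}(Z,A)$), or---as the paper does and as you yourself propose in your alternative---to invoke directly that $\opn{F}_{\mrm{fin}}(Z,A)$ is dense in $\opn{F}_{\mrm{dec}}(Z,\what{A})$ by Theorem \ref{thm:2}(1) (equivalently Corollary \ref{cor:7}). Your alternative argument via the universal property of completion is correct and is essentially the paper's proof.
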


\begin{proof}
The existence of such a homomorphism was already proved in Corollary
\ref{cor:12}. Recall that the formula is
\[ \phi(g) = \sum_{z \in Z} g(z) f(z) \in M \]
for $g \in \opn{F}_{\mrm{dec}}(Z, \what{A})$.
Uniqueness is because $M$ is complete,
and the image of $\opn{F}_{\mrm{fin}}(Z, A)$ in
$\opn{F}_{\mrm{dec}}(Z, \what{A})$, which is the $A$-submodule generated by the 
collection \linebreak $\{ \delta_z \}_{z \in Z}$, is dense in 
$\opn{F}_{\mrm{dec}}(Z, \what{A})$, by Theorem \ref{thm:2}(1).
\end{proof}

\begin{exa}
Take any set $Z$.  Consider the function 
$f : Z \to \opn{F}_{\mrm{dec}}(Z, \what{A})$,
$f(z) := \delta_z$. The corresponding homomorphism $\phi$ is the identity of
$\opn{F}_{\mrm{dec}}(Z, \what{A})$. This says that
\[ g = \sum_{z \in Z} g(z) \delta_z \]
for any $g \in \opn{F}_{\mrm{dec}}(Z, \what{A})$.
\end{exa}

\begin{dfn} 
An $A$-module $P$ is called {\em $\a$-adically free} if it
isomorphic to the $A$-module $\opn{F}_{\mrm{dec}}(Z, \what{A})$
for some set $Z$.
\end{dfn}

\begin{cor}
Suppose $B$ is another noetherian ring, $\mfrak{b} \subset B$
is an ideal, and $f : A \to B$ is a ring homomorphism
satisfying $f(\a) \subset \mfrak{b}$. If $P$ is an $\a$-adically free
$A$-module, then the $B$-module
\[ B \hatotimes{A} P := \Lambda_{\mfrak{b}} (B \otimes_A P) \]
is $\mfrak{b}$-adically free.
\end{cor}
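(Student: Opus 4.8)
The plan is to show that $B \hatotimes{A} P$ is isomorphic to $\opn{F}_{\mrm{dec}}(Z, \what{B})$, where $\what{B}$ denotes the $\mfrak{b}$-adic completion of $B$; by the very definition of $\mfrak{b}$-adic freeness this is exactly what has to be proved.

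The first step is to reduce to a completed free module. Since $P$ is $\a$-adically free, $P \cong \opn{F}_{\mrm{dec}}(Z, \what{A})$ for some set $Z$, and by Corollary \ref{cor:7} this is $A$-module isomorphic to $\Lambda_{\a} Q$, where $Q := \opn{F}_{\mrm{fin}}(Z, A) = \boplus_{z \in Z} A$. Fix such an isomorphism and let $\rho : Q \to P$ be the composite $Q \xrightarrow{\tau_Q} \what{Q} = \Lambda_{\a} Q \iso P$. The key claim is that for every $i \geq 0$ the induced homomorphism $A_i \otimes_A Q \to A_i \otimes_A P$ is bijective. This follows from the case $\rho = \tau_Q$: since $A$ is noetherian, Corollary \ref{cor:3} says $\what{Q}$ is $\a$-adically complete, so Theorem \ref{thm:6}, implication (i)$\Rightarrow$(ii), shows $\tau_{Q,i}$ is surjective, while the argument in the proof of Theorem \ref{thm:6} shows $\tau_{Q,i}$ is always split injective. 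Hence $\tau_{Q,i}$, and therefore the map induced by $\rho$, is bijective.

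Next I would transport this across $f$. Because $f(\a) \subset \mfrak{b}$ we have $f(\a^{j+1}) \subset \mfrak{b}^{j+1}$, so $B_j := B / \mfrak{b}^{j+1}$ is naturally an $A_j := A / \a^{j+1}$-module, and for any $A$-module $N$ there are natural isomorphisms $B_j \otimes_B (B \otimes_A N) \cong B_j \otimes_A N \cong B_j \otimes_{A_j} (A_j \otimes_A N)$. Applying $B_j \otimes_{A_j} -$ to the isomorphism $A_j \otimes_A Q \iso A_j \otimes_A P$ from the previous step shows that the natural map
\[ B_j \otimes_B (B \otimes_A Q) \longrightarrow B_j \otimes_B (B \otimes_A P) \]
is bijective for every $j \geq 0$. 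Passing to the inverse limit over $j$, the induced homomorphism $\Lambda_{\mfrak{b}}(B \otimes_A Q) \to \Lambda_{\mfrak{b}}(B \otimes_A P) = B \hatotimes{A} P$ is bijective.

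Finally I would evaluate the source of this map. We have $B \otimes_A Q = B \otimes_A \opn{F}_{\mrm{fin}}(Z, A) \cong \opn{F}_{\mrm{fin}}(Z, B)$, and since $B$ is noetherian, Corollary \ref{cor:3} shows $\what{B}$ is $\mfrak{b}$-adically complete, so Corollary \ref{cor:7} applied to the ring $B$, the ideal $\mfrak{b}$, and the $B$-module $B$ gives $\Lambda_{\mfrak{b}} \opn{F}_{\mrm{fin}}(Z, B) \cong \opn{F}_{\mrm{dec}}(Z, \what{B})$. Chaining the isomorphisms yields $B \hatotimes{A} P \cong \opn{F}_{\mrm{dec}}(Z, \what{B})$, so it is $\mfrak{b}$-adically free. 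The only point that is not pure bookkeeping is the claim that $A_i \otimes_A \rho$ is bijective — i.e.\ that completing a free $A$-module does not change its reductions modulo the powers of $\a$; once that is granted, everything reduces to formal manipulation of tensor products and inverse limits, the essential trick being that each $B_j$ is an $A_j$-module, so reduction modulo $\mfrak{b}^{j+1}$ factors through $A_j$.
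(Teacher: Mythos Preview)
Your proof is correct and follows essentially the same route as the paper: both arguments reduce to computing $B_j \otimes_A P \cong \opn{F}_{\mrm{fin}}(Z, B_j)$ and then passing to the inverse limit. The only difference is cosmetic: where the paper invokes Theorem~\ref{thm:2}(1) directly to get $A_i \otimes_A \opn{F}_{\mrm{dec}}(Z, \what{A}) \cong \opn{F}_{\mrm{fin}}(Z, A_i)$, you instead introduce the free module $Q$ and argue that $\tau_{Q,i}$ is bijective via Theorem~\ref{thm:6} and Corollary~\ref{cor:3}; these amount to the same thing.
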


\begin{proof}
Letting $B_i := B / \mfrak{b}^{i+1}$, we have induced ring
homomorphisms $A_i \to B_i$ for all $i \geq 0$. 
Choose an $A$-module isomorphism
$P \cong \opn{F}_{\mrm{dec}}(Z, \what{A})$. 
Then by Theorem \ref{thm:2} we have canonical isomorphisms
\[ B_i \otimes_A P \cong B_i \otimes_{A_i} A_i \otimes_A 
\opn{F}_{\mrm{dec}}(Z, \what{A})
\cong B_i \otimes_{A_i} \opn{F}_{\mrm{fin}}(Z, A_i)
\cong \opn{F}_{\mrm{fin}}(Z, B_i) . \]
We see that
\[ \Lambda_{\mfrak{b}} (B \otimes_A P) \cong 
\lim_{\leftarrow i}\, (B_i \otimes_A P) \cong
\lim_{\leftarrow i}\, \opn{F}_{\mrm{fin}}(Z, B_i) \cong
\opn{F}_{\mrm{dec} / \mfrak{b}}(Z, \Lambda_{\mfrak{b}} B) , \]
where $\opn{F}_{\mrm{dec} / \mfrak{b}}(Z, -)$ refers to the 
$\mfrak{b}$-adic decay condition.
\end{proof}

\begin{prop} \label{prop:11}
The following two conditions are equivalent for an $A$-module $P$\tup{:}
\begin{enumerate}
\rmitem{i} $P$ is $\a$-adically free.
\rmitem{ii} $P$ is isomorphic to $\a$-adic completion $\what{Q}$ of some free
$A$-module $Q$.
\end{enumerate}
\end{prop}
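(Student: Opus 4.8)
The plan is to prove Proposition \ref{prop:11} by identifying the $\a$-adic completion of a free module with a module of decaying functions. Given a free $A$-module $Q$, write $Q \cong \opn{F}_{\mrm{fin}}(Z, A)$ for a suitable index set $Z$ (the cardinality of a basis). The key input is Corollary \ref{cor:7}, which (via Corollary \ref{cor:3}, guaranteeing that $\what{A}$ is $\a$-adically complete) tells us that $\opn{F}_{\mrm{dec}}(Z, \what{A})$ is canonically isomorphic to $\Lambda_{\a} \opn{F}_{\mrm{fin}}(Z, A) = \what{Q}$. This immediately gives both implications.

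More precisely, for (ii) $\Rightarrow$ (i): if $P \cong \what{Q}$ for some free $Q$, choose a basis to get $Q \cong \opn{F}_{\mrm{fin}}(Z, A)$, and then $P \cong \Lambda_{\a} \opn{F}_{\mrm{fin}}(Z, A) \cong \opn{F}_{\mrm{dec}}(Z, \what{A})$ by Corollary \ref{cor:7} applied with $M := A$ (note $\what{A}$ is complete by Corollary \ref{cor:3}, so the hypothesis of Theorem \ref{thm:7} and Corollary \ref{cor:7} is satisfied). Hence $P$ is $\a$-adically free by definition. For (i) $\Rightarrow$ (ii): if $P$ is $\a$-adically free, then by definition $P \cong \opn{F}_{\mrm{dec}}(Z, \what{A})$ for some $Z$; take $Q := \opn{F}_{\mrm{fin}}(Z, A)$, which is free, and again Corollary \ref{cor:7} gives $\what{Q} \cong \opn{F}_{\mrm{dec}}(Z, \what{A}) \cong P$.

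I do not anticipate any real obstacle here: the proposition is essentially a reformulation of Corollary \ref{cor:7} once one knows (Corollary \ref{cor:3}) that completions of $A$-modules are complete over the noetherian ring $A$, so that Theorem \ref{thm:7} applies to $M = A$. The only point deserving care is to make sure that "free $A$-module" is interpreted as $\opn{F}_{\mrm{fin}}(Z, A)$ for an arbitrary set $Z$ (allowing infinite rank), which is exactly the generality in which Corollary \ref{cor:7} is stated. So the proof will be just a couple of lines invoking these two corollaries in each direction.

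\begin{proof}
(i) $\Rightarrow$ (ii): Assume $P$ is $\a$-adically free, so $P \cong \opn{F}_{\mrm{dec}}(Z, \what{A})$ for some set $Z$. Let $Q := \opn{F}_{\mrm{fin}}(Z, A)$; this is a free $A$-module. Since $A$ is noetherian, Corollary \ref{cor:3} shows that $\what{A}$ is $\a$-adically complete, so Corollary \ref{cor:7} (with $M := A$) gives an isomorphism $\what{Q} = \Lambda_{\a} \opn{F}_{\mrm{fin}}(Z, A) \cong \opn{F}_{\mrm{dec}}(Z, \what{A}) \cong P$.

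(ii) $\Rightarrow$ (i): Assume $P \cong \what{Q}$ for some free $A$-module $Q$. Choosing a basis, we have $Q \cong \opn{F}_{\mrm{fin}}(Z, A)$ for a suitable set $Z$. As above, $\what{A}$ is $\a$-adically complete by Corollary \ref{cor:3}, so Corollary \ref{cor:7} yields $P \cong \Lambda_{\a} \opn{F}_{\mrm{fin}}(Z, A) \cong \opn{F}_{\mrm{dec}}(Z, \what{A})$. Hence $P$ is $\a$-adically free.
\end{proof}
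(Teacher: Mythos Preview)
Your proof is correct and follows essentially the same approach as the paper: both directions reduce to Corollary~\ref{cor:7} after identifying a free module with $\opn{F}_{\mrm{fin}}(Z, A)$ via a choice of basis. Your explicit verification that $\what{A}$ is $\a$-adically complete (needed for the hypothesis of Corollary~\ref{cor:7}) is a nice touch, though the paper simply takes this as the well-known fact recorded at the start of Section~\ref{sec:noeth} rather than invoking Corollary~\ref{cor:3}.
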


\begin{proof}
First suppose 
$P \cong \what{Q}$ for some free $A$-module $Q$. By choosing a basis for
$Q$, indexed by a set $Z$, we get an isomorphism
$Q \cong \opn{F}_{\mrm{fin}}(Z, A)$.
According to Corollary \ref{cor:7} we get an isomorphism
$P \cong \opn{F}_{\mrm{dec}}(Z, \what{A})$.
The reverse implication is proved similarly.
\end{proof}

\begin{exa}
Suppose $A$ is complete, $\K$ is a field, and $\K \to A$ is a ring homomorphism.
Let $V$ be a $\K$-module. The $A$-module
$A \otimes_{\K} V$ is free, and therefore its 
$\a$-adic completion 
$A \hatotimes{\K} V := \Lambda_{\a} (A \otimes_{\K} V)$ is $\a$-adically free.
\end{exa}

\begin{cor} \label{cor:6}
Suppose $M$ is an $\a$-adically complete $A$-module. Then 
there is a surjection $\phi : P \to M$ for some $\a$-adically free $A$-module
$P$.
\end{cor}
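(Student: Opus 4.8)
The plan is to realize $P$ as the $\a$-adic completion of a free $A$-module that surjects onto $M$, and then to identify that completion with a module of decaying functions.

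First I would choose any free $A$-module $Q$ together with a surjection $\psi : Q \surj M$; for concreteness take $Z := M$ (the underlying set), $Q := \opn{F}_{\mrm{fin}}(Z, A)$, and let $\psi$ be the evaluation map sending $\delta_m \mapsto m$. Next, apply the completion functor $\Lambda_{\a}$. By Proposition \ref{prop:10} the induced homomorphism $\what{\psi} : \what{Q} \to \what{M}$ is again surjective. Since $M$ is $\a$-adically complete, $\tau_M : M \to \what{M}$ is bijective, so $\tau_M^{-1} \circ \what{\psi} : \what{Q} \surj M$ is a surjection.

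It remains to identify $\what{Q}$. Because $A$ is noetherian, Corollary \ref{cor:3} gives that $\what{A} = \Lambda_{\a} A$ is $\a$-adically complete, so the hypothesis of Theorem \ref{thm:7} is satisfied for the module $A$. Hence Corollary \ref{cor:7}, applied with $Q = \opn{F}_{\mrm{fin}}(Z, A)$, yields an isomorphism $\what{Q} = \Lambda_{\a} \opn{F}_{\mrm{fin}}(Z, A) \cong \opn{F}_{\mrm{dec}}(Z, \what{A})$. Thus $P := \what{Q} \cong \opn{F}_{\mrm{dec}}(Z, \what{A})$ is $\a$-adically free and surjects onto $M$, as required.

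There is essentially no obstacle here: the free cover, surjectivity of $\Lambda_{\a}$ on surjections, and the computation of $\Lambda_{\a}\opn{F}_{\mrm{fin}}$ are all already available. The only point needing a moment's care is that Corollary \ref{cor:7} is invoked with its module "$M$" taken to be $A$ itself, which is legitimate precisely because $\what{A}$ is complete (Corollary \ref{cor:3}), and this is where noetherianity of $A$ is used. An alternative, more hands-on route is to pick elements $m_z \in M$ whose images generate the $A_0$-module $M_0 := A_0 \otimes_A M$ (for instance index by $Z := M_0$), apply Theorem \ref{thm:8} — strictly, with $\what{A}$ in place of $A$, using that $M$ is a $\what{\a}$-adically complete $\what{A}$-module and that $\what{\a}$-decay coincides with $\a$-decay — to conclude that $\{ m_z \}_{z \in Z}$ $\a$-adically generates $M$, and then take $\phi : \opn{F}_{\mrm{dec}}(Z, \what{A}) \to M$ to be the homomorphism with $\phi(\delta_z) = m_z$ provided by Corollary \ref{cor:11}; its surjectivity is immediate from the generation statement.
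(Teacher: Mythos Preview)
Your proof is correct and follows essentially the same route as the paper: choose a free cover $\psi : Q \surj M$, complete to get $\what{\psi} : \what{Q} \surj \what{M} \cong M$ via Proposition~\ref{prop:10}, and then observe that $\what{Q}$ is $\a$-adically free. The paper packages that last step as a reference to Proposition~\ref{prop:11} (whose proof is exactly your Corollary~\ref{cor:7} argument), whereas you unpack it directly; your alternative Nakayama-style route via Theorem~\ref{thm:8} and Corollary~\ref{cor:11} is also fine but not what the paper does.
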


\begin{proof}
Choose a surjection $\psi : Q \to M$, where $Q$ is some free $A$-module.
By Proposition \ref{prop:10} the induced homomorphism 
$\what{\psi} : \what{Q} \to \what{M}$
is surjective. We know that $P := \what{Q}$ is $\a$-adically free
(see Proposition \ref{prop:11}), and that 
$\tau_M : M \to \what{M}$ is bijective. So we can take
$\phi := \tau_M^{-1} \circ \what{\psi}$.
\end{proof}

\begin{dfn}
An $A$-module $P$ is called {\em $\a$-adically projective} if it satisfies the
following two conditions:
\begin{enumerate}
\rmitem{i}$P$ is $\a$-adically complete.

\rmitem{ii} Suppose $M$ and $N$ are $\a$-adically complete $A$-modules, 
and $\phi : M \to N$ a surjective homomorphism. Then any homomorphism
$\psi : P \to N$ can be lifted to a homomorphism 
$\til{\psi} : P \to M$,
such that $\psi = \phi \circ \til{\psi}$.
\end{enumerate}
\end{dfn}

\begin{rem}
It can be shown that condition (ii) above is equivalent to $P$ being {\em
topologically projective}, in the sense of 
\cite[Section 0${}_{\mrm{IV}}$.19.2]{EGA-IV}
\end{rem}

\begin{cor} \label{cor:15}
An $A$-module $P$ is $\a$-adically projective if and only if it is a direct
summand of an $\a$-adically free module.
\end{cor}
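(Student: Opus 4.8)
The plan is to mimic the classical proof that projective modules are exactly the direct summands of free modules, replacing ``free'' by ``$\a$-adically free'' and ``surjection'' by ``surjection of $\a$-adically complete modules''; the key extra input is Corollary \ref{cor:6}, which gives enough $\a$-adically free modules to surject onto any $\a$-adically complete module.

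For the ``if'' direction, suppose $P$ is a direct summand of an $\a$-adically free module $F = \opn{F}_{\mrm{dec}}(Z, \what{A})$, say $F \cong P \oplus P'$. First I would check that $F$ is $\a$-adically complete: this is Theorem \ref{thm:2}(2) (or Corollary \ref{cor:14}). Since $\Lambda_{\a}$ is additive, $\what{F} \cong \what{P} \oplus \what{P'}$, and from $\tau_F = \tau_P \oplus \tau_{P'}$ being bijective one deduces $\tau_P$ is bijective, so $P$ is $\a$-adically complete, giving condition (i). For the lifting property (ii), let $\phi : M \to N$ be a surjection of $\a$-adically complete modules and $\psi : P \to N$ a homomorphism. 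Compose with the projection $F \to P$ to get $\psi' : F \to N$; it suffices to lift $\psi'$ to $F$ and then restrict to $P$. So it is enough to prove the lifting property for $F$ itself. For this I would use Corollary \ref{cor:11}: for each $z \in Z$ choose $m_z \in M$ with $\phi(m_z) = \psi'(\delta_z)$, which is possible since $\phi$ is surjective; by Corollary \ref{cor:11} there is a unique $A$-linear $\til\psi' : F \to M$ with $\til\psi'(\delta_z) = m_z$. Then $\phi \circ \til\psi'$ and $\psi'$ agree on all $\delta_z$, hence agree on the dense submodule $\opn{F}_{\mrm{fin}}(Z, A)$, and since $N$ is complete and both maps are continuous they are equal. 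Thus $F$, and therefore $P$, has the lifting property.

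For the ``only if'' direction, suppose $P$ is $\a$-adically projective. By Corollary \ref{cor:6} there is a surjection $\phi : F \to P$ with $F$ an $\a$-adically free module; note $P$ and $F$ are both $\a$-adically complete (the latter by Theorem \ref{thm:2}(2)), so condition (ii) applies. Apply the lifting property of $P$ to $\phi : F \to P$ and the identity $\psi := \opn{id}_P : P \to P$: we obtain $\til\psi : P \to F$ with $\phi \circ \til\psi = \opn{id}_P$. Hence $\phi$ is a split surjection, $\til\psi$ identifies $P$ with a direct summand of $F$, and $F$ is $\a$-adically free. This completes the argument.

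The main obstacle is the ``if'' direction, specifically verifying the lifting property: one must be careful that the lift produced via Corollary \ref{cor:11} actually satisfies $\phi \circ \til\psi' = \psi'$ everywhere, not just on the $\delta_z$. This is where completeness of $N$ and the density of $\opn{F}_{\mrm{fin}}(Z, A)$ in $F$ (Theorem \ref{thm:2}(1)) are essential; without completeness of the target the equality on a dense subset would not propagate. The reduction from $P$ to the ambient $\a$-adically free module $F$ is routine, as is the additivity bookkeeping for condition (i).
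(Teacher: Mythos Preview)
Your proposal is correct and follows essentially the same route as the paper: the paper shows the $\a$-adically free module is $\a$-adically projective via Theorem~\ref{thm:2}(2) and Corollary~\ref{cor:11}, then passes to direct summands, and for the converse uses Corollary~\ref{cor:6} to split a surjection from an $\a$-adically free module---exactly your argument, only more tersely stated. One small simplification: to verify $\phi \circ \til\psi' = \psi'$ you need not argue via density and continuity, since the uniqueness clause of Corollary~\ref{cor:11} (applied with target $N$) gives this immediately.
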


\begin{proof}
First assume that $P$ is a direct summand of an $\a$-adically free module; say
$P \oplus P' = Q$. By Theorem \ref{thm:2}(2) and Corollary \ref{cor:11}, the 
$\a$-adically free module $Q$ is $\a$-adically projective.
And it is easy to see that a direct summand of an $\a$-adically projective
module is also $\a$-adically projective. 

Conversely, assume that $P$ is $\a$-adically projective. Because $P$ is
complete, by Corollary \ref{cor:6}  there exists a surjection
$\phi : Q \to P$ for some $\a$-adically free module $Q$. 
Since $P$ and $Q$ are both complete, condition (ii) says that
$\phi$ is split. 
\end{proof}

To finish this section, here are a couple of examples and a remark. The first
example is a bit facile, but instructive.

\begin{exa} \label{exa:6}
Let $\K$ be a field, and let
$A := \K[[ t ]]$, the ring of formal power series in one variable. 
It is a complete noetherian local ring with maximal ideal $\a = (t)$.
Let $K := \K((t))$, the field of fractions of $A$. Consider the inclusion
$\phi : A \to K$. For any $i \geq 0$ we have
$A_i \otimes_A K = 0$. Therefore $\what{K} = 0$, and 
$\what{\phi} : \what{A} \to \what{K}$ is not injective.
\end{exa}

We see that the functor $\Lambda_{\a}$ does not respect injections. Since it
does respect surjections (Proposition \ref{prop:10}), one is tempted to guess
that $\Lambda_{\a}$ is right exact. But here is a counterexample.

\begin{exa} \label{exa:5}
With $A := \K[[t]]$ and $\a = (t)$ as in the previous example, let 
$P, Q := \opn{F}_{\mrm{dec}}(\N, A)$. Define a homomorphism
$\phi : P \to Q$ by 
$\phi(\delta_i) := t^i \delta_i$, where 
$\delta_i \in \opn{F}_{\mrm{dec}}(\N, A)$
are the delta functions. It is easy to see that $\phi$ is injective. 

We claim that the submodule $L := \opn{Im}(\phi)$ is not closed in $Q$. 
Indeed, consider the element 
$f := \sum_{i \in \N} t^i \delta_i \in Q$. 
Clearly $f$ is in the closure $\bar{L}$ of $L$. If there were
some $g \in P$ such that $f = \phi(g)$, then writing
$a_i := g(i) \in A$, we would have
$g = \sum_{i} a_i \delta_i$. Hence
\[ f = \phi(g) = \sum_{i} a_i \phi(\delta_i) = 
\sum_{i} a_i t^i \delta_i . \]
By uniqueness of the series expansion, it would follow that 
$a_i = 1$ for all $i$. But then the function 
$g : \N \to A$ would not be decaying; so we arrive at a contradiction.

Let us define $M := Q / L$.  So there is an exact sequence of $A$-modules
\begin{equation} \label{eqn:6}
0 \to P \xar{\phi} Q \xar{\psi} M \to  0 . 
\end{equation}
Now $P$ and $Q$ are complete, so we can identify them with their completions 
$\what{P}$ and $\what{Q}$. According to Proposition \ref{prop:10} the
homomorphism $\what{\psi} : Q \to \what{M}$ is surjective, and by Corollary
\ref{cor:3} the module
$\what{M}$ is complete. Therefore 
$\opn{Ker}(\what{\psi}) = \bar{L}$. Because $L \subsetneq \bar{L}$ we see
that  $\tau_M : M \to \what{M}$ is surjective but not bijective. Thus
$M$ {\em is not $\a$-adically complete}.
Also, since $\what{\phi} = \phi$, we see that the sequence 
\[ 0 \to \what{P} \xar{\what{\phi}} \what{Q} \xar{\what{\psi}} \what{M} \to  0 
\]
that we get by completing (\ref{eqn:6}) is not exact at 
$\what{Q}$. This shows that {\em the functor $\Lambda_{\a}$ is not right exact}.
\end{exa}

\begin{rem} \label{rem:1}
Suppose $A$ is complete and $Q$ is a free $A$-module of {\em countable} rank.
V. Drinfeld and M. Hochster mentioned to us an alternative proof of the fact
that $\what{Q}$ is flat and $\a$-adically complete.  In this
case $Q$ is isomorphic, as $A$-module, to the polynomial algebra $A[t]$.
Then the completion $\what{Q}$ is isomorphic, as $A$-module, to the algebra
$A \{ t \}$ of restricted formal power series; see Example
\ref{exa:2}. It is shown in \cite{CA} that 
$A \{ t \}$ is $\a$-adically complete and flat over $A$.
\end{rem}

\section{Complete Noetherian Local Rings}
\label{sec:local}

In this section $A$ is a complete noetherian local commutative ring, with
maximal ideal $\m$.
For $i \geq 0$ we write $A_i := A / \m^{i+1}$. 

\begin{dfn} \label{dfn:3}
An {\em $\m$-adic system of $A$-modules} is a collection 
$\{ M_i \}_{i \in \mbb{N}}$ of \linebreak $A$-modules, together with a
collection $\{ \psi_i \}_{i \in \mbb{N}}$ of 
homomorphisms 
$\psi_i : M_{i+1} \to M_i$.
The conditions are:
\begin{enumerate}
\rmitem{i} For every $i$ one has $\m^{i+1} M_i = 0$. Thus 
$M_i$ is an $A_i$-module.
\rmitem{ii} For every $i$ the $A_i$-linear homomorphism
$A_i \otimes_{A_{i+1}} M_{i+1} \to M_i$
induced by $\psi_i$ is an isomorphism. 
\end{enumerate}
\end{dfn}

Usually the collection of homomorphisms 
$\{ \psi_i \}_{i \in \mbb{N}}$ remains implicit.

\begin{exa}
Suppose $M$ is an $A$-module, and let $M_i := A_i \otimes_A M$. Then 
$\{ M_i \}_{i \in \mbb{N}}$ is an $\m$-adic system of
$A$-modules. 
\end{exa}

\begin{thm} \label{thm:1}
Let $A$ be a complete noetherian local ring, with maximal ideal
$\m$, and let $\{ M_i \}_{i \in \mbb{N}}$
be an $\m$-adic system of $A$-modules. Assume that 
$M_i$ is flat over $A_i$ for every $i$. Define
$M := \lim_{\leftarrow i}\, M_i$.
Then the following hold.
\begin{enumerate}
\item The $A$-module $M$ is $\m$-adically free.
\item For every $i \geq 0$ the canonical homomorphism
$A_i \otimes_A M \to M_i$
is bijective.
\end{enumerate}
\end{thm}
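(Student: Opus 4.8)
**The plan is to reduce everything to Theorem \ref{thm:2} by choosing compatible bases for the $M_i$ modulo higher powers of $\m$, "lifting" them to a decaying family inside $M$, and showing that lift gives an isomorphism.**

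First I would use condition (ii) of Definition \ref{dfn:3} together with flatness of $M_i$ over $A_i$ to recognize that each $M_i$ is actually a \emph{free} $A_i$-module: indeed $M_0$ is flat over the field-like ring $A_0 = A/\m$, hence free, say with basis indexed by a set $Z$; and then by the complete Nakayama argument (or just the ordinary Nakayama lemma applied to $A_i$, whose nilradical-type ideal $\m/\m^{i+1}$ is nilpotent) any lift of that basis to $M_i$ generates $M_i$, while flatness over $A_i$ forces these generators to be a basis. Doing this compatibly down the system (lifting the chosen basis of $M_{i}$ to $M_{i+1}$ using the isomorphism $A_i\otimes_{A_{i+1}} M_{i+1}\iso M_i$) produces, for each $z\in Z$, an element $m_z\in M = \lim_{\leftarrow i} M_i$ whose image in $M_i$ is the $i$-th basis element. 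Concretely each $M_i \cong \opn{F}_{\mrm{fin}}(Z, A_i)$ compatibly in $i$.

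Next I would invoke Corollary \ref{cor:11} (or Corollary \ref{cor:12}): since $A$ is complete and $M$ is $\m$-adically complete — which holds because $M = \lim_{\leftarrow i} M_i$ with $\m^{i+1}M_i = 0$, so $M$ is by construction an inverse limit of the required shape — there is a unique $A$-linear homomorphism
\[ \phi : \opn{F}_{\mrm{dec}}(Z, \what{A}) = \opn{F}_{\mrm{dec}}(Z, A) \to M, \qquad \phi(\delta_z) = m_z. \]
By Theorem \ref{thm:2}(1), applied to the complete ring $A$, we have $A_i \otimes_A \opn{F}_{\mrm{dec}}(Z, A) \cong \opn{F}_{\mrm{fin}}(Z, A_i)$, and under this identification the map $A_i\otimes_A\phi$ is exactly the chosen isomorphism $\opn{F}_{\mrm{fin}}(Z, A_i) \iso M_i$. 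Passing to the inverse limit over $i$, and using that $\opn{F}_{\mrm{dec}}(Z,A)$ is $\m$-adically complete (Theorem \ref{thm:2}(2)) while $M$ is too, we conclude $\phi$ itself is an isomorphism. This proves (1). For (2), once $\phi$ is an isomorphism, $A_i\otimes_A M \cong A_i\otimes_A \opn{F}_{\mrm{dec}}(Z,A) \cong \opn{F}_{\mrm{fin}}(Z,A_i) \cong M_i$, and one checks this composite is the canonical map.

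**The main obstacle** will be the compatible choice of bases and the verification that the induced $A_i\otimes_A\phi$ really is the structural isomorphism $M_i \cong \opn{F}_{\mrm{fin}}(Z,A_i)$ — i.e.\ correctly threading the identifications from Definition \ref{dfn:3}(ii) so that the square relating level $i$ and level $i+1$ commutes on the nose. A secondary subtlety is confirming that $M_i$ is genuinely free (not merely flat) over $A_i$: this needs that flat plus finitely-presented-on-each-orbit, or rather that a flat module over an Artinian-type local quotient with a lift of a basis of $M_0$ is free; the clean route is to note $M_i$ is flat over $A_i$ and $M_i/\m M_i \cong M_0$ is free over $A_0$, then lift a basis and use Nakayama for surjectivity and flatness (tensoring the candidate short exact sequence $0\to K\to \opn{F}_{\mrm{fin}}(Z,A_i)\to M_i\to 0$ with $A_0$) for injectivity. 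Everything else is a routine inverse-limit argument resting on Theorem \ref{thm:2} and Corollary \ref{cor:11}.
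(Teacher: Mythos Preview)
Your approach is essentially the paper's: the auxiliary step you flag as a ``secondary subtlety'' is exactly the lemma the paper isolates and proves (Nakayama for surjectivity, then tensor the short exact sequence $0 \to K \to \opn{F}_{\mrm{fin}}(Z, A_{i+1}) \to M_{i+1} \to 0$ with $A_i$ and use flatness of $M_{i+1}$ to kill $\opn{Tor}_1$, forcing $A_i \otimes_{A_{i+1}} K = 0$ and hence $K = 0$). The compatible bases then give an inverse system of isomorphisms $M_i \cong \opn{F}_{\mrm{fin}}(Z, A_i)$, and one passes to the limit via Theorem~\ref{thm:2}.

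There is one small logical slip. You invoke Corollary~\ref{cor:11} by asserting that $M$ is $\m$-adically complete ``because $M = \lim_{\leftarrow i} M_i$ with $\m^{i+1} M_i = 0$''. That inference is not valid: $\m$-adic completeness of $M$ means $M \cong \lim_{\leftarrow i} (A_i \otimes_A M)$, and you do not yet know $A_i \otimes_A M \to M_i$ is injective --- that is precisely assertion (2), which you are still proving. The paper sidesteps this by never appealing to completeness of $M$: once you have the compatible isomorphisms $\opn{F}_{\mrm{fin}}(Z, A_i) \iso M_i$, simply apply $\lim_{\leftarrow i}$ to both sides. The left-hand limit is $\opn{F}_{\mrm{dec}}(Z, A)$ by Theorem~\ref{thm:7} (or Theorem~\ref{thm:2}), and the right-hand limit is $M$ by definition. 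This gives the isomorphism $\phi$ directly, and then (2) follows from Theorem~\ref{thm:2}(1). Your detour through Corollary~\ref{cor:11} is unnecessary and is the source of the circularity; drop it and take the inverse limit of the level-$i$ isomorphisms instead.
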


We need an auxiliary result. 

\begin{lem}
In the setup of the theorem, suppose $M_i$ is a free 
$A_i$-module with basis 
$\{ \bar{m}_z \}_{z \in Z}$. Let $m_z \in M_{i+1}$ be a lifting of 
$\bar{m}_z$. Then $M_{i+1}$ is a free $A_{i+1}$-module with basis
$\{ m_z \}_{z \in Z}$.
\end{lem}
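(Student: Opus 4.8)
The plan is to lift a free basis through one step of an $\m$-adic system by reducing to the classical Nakayama lemma modulo $\m^{i+1}$, since everything in sight is annihilated by $\m^{i+1}$. Concretely, set $R := A_{i+1}$, which is a noetherian ring in which the ideal $\bar{\m} := \m / \m^{i+2}$ is nilpotent (indeed $\bar{\m}^{i+1} = 0$), and $R / \bar{\m} \cong A_0$. I want to show the $R$-module homomorphism $\rho : \opn{F}_{\mrm{fin}}(Z, R) \to M_{i+1}$ sending $\delta_z \mapsto m_z$ is an isomorphism.

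First I would show $\rho$ is surjective. Reducing modulo $\bar{\m}$, the composite $\opn{F}_{\mrm{fin}}(Z, R) \to M_{i+1} \to A_0 \otimes_R M_{i+1}$ is the $A_0$-linear map induced by $\{ \bar{m}_z \bmod \m \}$. Now by condition (ii) of the $\m$-adic system together with the flatness hypothesis, $A_0 \otimes_{A_i} M_i \cong A_0 \otimes_{A_{i+1}} M_{i+1}$; and since $M_i$ is free on $\{\bar{m}_z\}$, its reduction $A_0 \otimes_{A_i} M_i$ is free on the images of the $\bar{m}_z$. Hence $A_0 \otimes_R M_{i+1}$ is generated by the images of the $m_z$, so the reduced map is surjective; since $\bar{\m}$ is nilpotent, the usual Nakayama lemma (as cited in the proof of Theorem~\ref{thm:8}, i.e. \cite[Corollary II.3.1]{CA}) gives that $\rho$ itself is surjective.

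Next I would show $\rho$ is injective, i.e. that $\{m_z\}_{z\in Z}$ is $R$-linearly independent. Here is where I expect the main obstacle: a priori a finite $R$-linear relation $\sum_z a_z m_z = 0$ need not reduce to a relation among the $\bar m_z$ in $M_i$ unless I use flatness of $M_{i+1}$ over $R = A_{i+1}$. The cleanest route is: $M_{i+1}$ is a flat $A_{i+1}$-module (this is part of what must be checked — it follows from condition (ii), flatness of each $M_j$ over $A_j$, and the local criterion for flatness over the artinian local ring $A_{i+1}$; alternatively one proves it by a short induction on $i$ using the exact sequence $0 \to \m^{i+1}/\m^{i+2} \to A_{i+1} \to A_i \to 0$ and the fact that $\m^{i+1}/\m^{i+2}$ is a vector space over $A_0$ on which flatness of $M_i$ forces $\operatorname{Tor}^{A_{i+1}}_1(A_i, M_{i+1}) = 0$). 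Granting flatness of $M_{i+1}$, a surjection $F := \opn{F}_{\mrm{fin}}(Z, R) \surj M_{i+1}$ between two flat $R$-modules that becomes an isomorphism after $- \ox_R A_0$ must be an isomorphism: its kernel $K$ sits in $0 \to K \to F \to M_{i+1} \to 0$, and tensoring with $A_0$ (using $\operatorname{Tor}^R_1(A_0, M_{i+1}) = 0$) gives $A_0 \ox_R K = 0$; since $K$ is a submodule of the noetherian-ish — actually $F$ need not be finitely generated — so instead I argue directly: if $0 \neq \sum_{z\in S} a_z m_z = 0$ with $S$ finite, restrict to the finitely generated free submodule $\opn{F}_{\mrm{fin}}(S, R) \to M_{i+1}$; its image $M'$ is a finitely generated submodule, the map $\opn{F}_{\mrm{fin}}(S,R) \to M'$ is surjective with kernel $K'$, and $A_0 \ox_R K' \inj A_0 \ox_R \opn{F}_{\mrm{fin}}(S,R)$ lands in the kernel of an isomorphism (the reduced map is injective on $\opn{F}_{\mrm{fin}}(S, A_0)$ because $\{\bar m_z\bmod\m\}_{z\in S}$ is part of a basis of the free module $A_0\ox_{A_i}M_i$), so $A_0 \ox_R K' = 0$, whence $K' = \bar\m K'$ and $K' = 0$ by nilpotence of $\bar\m$ applied to the finitely generated module $K'$ (here I use that a submodule of a finite free module over the noetherian ring $R$ is finitely generated). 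Therefore $\rho$ is injective.

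Combining the two steps, $\rho$ is an $R$-module isomorphism, i.e. $M_{i+1}$ is free over $A_{i+1}$ with basis $\{m_z\}_{z\in Z}$, which is exactly the claim. The only slightly delicate points are (a) extracting flatness of $M_{i+1}$ over $A_{i+1}$ from the hypotheses — I would isolate this as the key sub-lemma and handle it by the $\operatorname{Tor}$-vanishing argument above — and (b) being careful that $Z$ may be infinite, which is why I pass to a finite sub-index-set $S$ before invoking Nakayama for the injectivity argument.
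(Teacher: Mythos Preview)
Your surjectivity argument is fine. Two comments on the rest.

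First, flatness of $M_{i+1}$ over $A_{i+1}$ is already a hypothesis of Theorem~\ref{thm:1} (``$M_i$ is flat over $A_i$ for every $i$'', taking the index to be $i+1$), so your digression (a) about extracting it via the local criterion is unnecessary.

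Second, and more importantly, your first instinct for injectivity was correct and you abandoned it for a faulty detour. You had the exact sequence $0 \to K \to F \to M_{i+1} \to 0$ with $F = \opn{F}_{\mrm{fin}}(Z, R)$, tensored with $A_0$, and used $\opn{Tor}^R_1(A_0, M_{i+1}) = 0$ (flatness of $M_{i+1}$) to obtain $A_0 \otimes_R K = 0$, i.e.\ $K = \bar{\m} K$. You then worried that $K$ need not be finitely generated --- but the Nakayama argument you need here holds for \emph{any} module: if $I$ is nilpotent and $K = IK$, then iterating gives $K = I^n K = 0$, with no finiteness hypothesis. This is exactly the paper's proof (the paper tensors with $A_i$ rather than $A_0$, so that the relevant ideal $\m^{i+1}/\m^{i+2}$ even has square zero, but the mechanism is identical). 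Your detour through a finite $S \subset Z$ and the image $M' \subset M_{i+1}$ introduces a genuine gap: the claimed injection $A_0 \otimes_R K' \hookrightarrow A_0 \otimes_R \opn{F}_{\mrm{fin}}(S, R)$ would require $\opn{Tor}^R_1(A_0, M') = 0$, but $M'$ is only a submodule of the flat module $M_{i+1}$ and need not itself be flat over the artinian local ring $R$ (for instance $(x) \subset \K[x]/(x^2)$ is not flat). So drop the restriction to $S$ and finish as you originally started.
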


This result must be  well known, but we could not locate a reference
in the literature. The closest we got is \cite[Proposition 3.G]{Ma}.

\begin{proof}
Since the ideal 
$\m^{i+1} / \m^{i+2} = \opn{Ker}(A_{i+1} \to A_i)$ is nilpotent, 
Nakayama's Lemma says that the collection 
$\{ m_z \}_{z \in Z}$ generates $M_{i+1}$. So there is an exact
sequence of $A_{i+1}$-modules
\[ 0 \to N \to \opn{F}_{\mrm{fin}}(Z, A_{i+1}) \xar{\phi}
M_{i+1} \to 0 , \]
where $\phi(\delta_z) := m_z$ and $N := \opn{Ker}(\phi)$. Applying the
operation 
$A_i \otimes_{A_{i+1}} -$ to this sequence we get an exact sequence
\[ \opn{Tor}^{A_{i+1}}_1(A_i, M_{i+1}) \to 
A_i \otimes_{A_{i+1}} N \to 
\opn{F}_{\mrm{fin}}(Z, A_{i}) \xar{\bar{\phi}}
M_{i} \to 0 . \] 
Since $M_{i+1}$ is flat we get
$\opn{Tor}^{A_{i+1}}_1(A_i, M_{i+1}) = 0$.
On the other hand, since $\{ \bar{m}_z \}_{z \in Z}$ is a basis, we
see that  
$\bar{\phi} : \opn{F}_{\mrm{fin}}(Z, A_{i}) \to M_{i}$
is bijective. It follows that
$A_i \otimes_{A_{i+1}} N = 0$. Using the Nakayama Lemma once more we
see that $N = 0$.
\end{proof}

\begin{proof}[Proof of the theorem]
Since $A_0$ is a field, the $A_0$-module $M_0$ is free. Let us
choose a basis $\{ m_z \}_{z \in Z}$ for $M_0$.
By the lemma above, used recursively, we can lift this basis
to a basis of $M_i$ for every $i \geq 0$. Thus we get an inverse
system of isomorphisms 
$M_i \cong \opn{F}_{\mrm{fin}}(Z, A_{i})$. 
In the limit we get
$M \cong \opn{F}_{\mrm{dec}}(Z, A)$, by Theorem \ref{thm:2}(1,2).
So $M$ is $\m$-adically free.

Finally, according to Theorem \ref{thm:2}(1) we have 
$A_i \otimes_A M \cong M_i$.
\end{proof}

\begin{cor} \label{cor:1}
The following conditions are equivalent for an $A$-module $M$\tup{:}
\begin{enumerate}
\rmitem{i} $M$ is flat and $\m$-adically complete. 
\rmitem{ii} There is an $\m$-adic system of
$A$-modules $\{ M_i \}_{i \in \mbb{N}}$, such that $M_i$ is flat over
$A_i$ for every $i$, and an isomorphism of $A$-modules
$M \cong \lim_{\leftarrow i}\, M_i$.
\rmitem{iii} $M$ is $\m$-adically free.
\end{enumerate}
\end{cor}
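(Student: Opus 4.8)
The plan is to prove the cyclic chain of implications (i) $\Rightarrow$ (ii) $\Rightarrow$ (iii) $\Rightarrow$ (i). Essentially all the substantive work has already been done in Theorems \ref{thm:1} and \ref{thm:2}, so the task here is mainly to assemble those pieces and check a few routine compatibilities.

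For (i) $\Rightarrow$ (ii), assume $M$ is flat and $\m$-adically complete. I would set $M_i := A_i \otimes_A M$, with $\psi_i : M_{i+1} \to M_i$ the canonical map induced by $A_{i+1} \to A_i$. Axiom (i) of Definition \ref{dfn:3} is automatic since $\m^{i+1} M_i = 0$, and axiom (ii) holds because $A_i \otimes_{A_{i+1}} (A_{i+1} \otimes_A M) \cong A_i \otimes_A M$ canonically. Flatness of $M$ over $A$ gives, by base change, flatness of $M_i = A_i \otimes_A M$ over $A_i$. Finally, since $M$ is $\m$-adically complete, $M \cong \what{M} = \lim_{\leftarrow i} M_i$. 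This establishes (ii).

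For (ii) $\Rightarrow$ (iii), there is nothing new to do: this is exactly Theorem \ref{thm:1}(1), which says that $M = \lim_{\leftarrow i} M_i$ is $\m$-adically free whenever $\{ M_i \}$ is an $\m$-adic system with each $M_i$ flat over $A_i$ (the proof of that theorem recursively lifts a basis of the $A_0$-vector space $M_0$ to compatible bases of the $M_i$ and then passes to the limit using Theorem \ref{thm:2}). For (iii) $\Rightarrow$ (i), suppose $M \cong \opn{F}_{\mrm{dec}}(Z, \what{A})$ for some set $Z$. Since $A$ is already $\m$-adically complete we have $\what{A} = A$ and $\what{\a} = \m$, so Theorem \ref{thm:2}(2) applies verbatim and tells us precisely that $\opn{F}_{\mrm{dec}}(Z, A)$ is flat over $A$ and $\m$-adically complete. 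This closes the cycle.

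I do not anticipate any genuine obstacle. The only points requiring a moment's care are: the base-change argument that $A_i \otimes_A M$ is $A_i$-flat in (i) $\Rightarrow$ (ii); verifying that $\{ A_i \otimes_A M \}$ satisfies both axioms of an $\m$-adic system; and noting that $A$ being complete forces $\what{A} = A$ so that Theorem \ref{thm:2} can be invoked directly in (iii) $\Rightarrow$ (i). Everything substantive — the recursive basis-lifting in an $\m$-adic system, and the flatness and completeness of modules of decaying functions — is already contained in the quoted theorems.
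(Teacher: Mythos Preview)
Your proposal is correct and follows exactly the paper's own proof: the cyclic implications (i) $\Rightarrow$ (ii) $\Rightarrow$ (iii) $\Rightarrow$ (i), with the first declared trivial, the second being Theorem \ref{thm:1}(1), and the third being Theorem \ref{thm:2}(2). You have simply spelled out the routine details of (i) $\Rightarrow$ (ii) that the paper leaves implicit.
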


\begin{proof}
The implication (i) $\Rightarrow$ (ii) is trivial. 
The implication (ii) $\Rightarrow$ (iii) is Theorem \ref{thm:1}(1).
And the implication (iii) $\Rightarrow$ (i) is Theorem \ref{thm:2}(2).
\end{proof}

\begin{rem}
A special case of Corollary \ref{cor:1}, namely when 
$A = \K[[t]]$, the ring of formal power series in a variable
$t$ over a field $\K$, was proved in \cite[Lemma A.1]{CFT}.
\end{rem}

\begin{rem}
Assume $A$ is an equal characteristic complete local ring, namely
it contains a field $\K$ such that $\K \cong A / \m$.
Let $Q$ be a free $A$-module and $P := \what{Q}$. In this case there
is an alternative way to prove Theorem \ref{thm:2}(2). 
First choose an isomorphism $Q \cong A \otimes_{\K} V$ for some
$\K$-module $V$. Next choose a filtered $\K$-basis 
$\{ a_j \}_{j \in \mbb{N}}$ for $A$ (cf.\ \cite[Definition 6.5]{Ye1}; we may
assume $\m$ is not nilpotent). Then we obtain $\K$-module
isomorphisms
$A \cong \prod_{j \geq 0} \K$,
$P \cong \prod_{j \geq 0} V$ and
$\m^i P \cong \prod_{j \geq j_i} V$,
where 
$0 = j_0 < j_1 < j_2 \cdots$.
This implies completeness of $P$. Flatness is proved similarly, but
it is a bit more complicated.
\end{rem}

\section{Flat Complete Sheaves of Modules}
\label{sec:sheaves}

In this section there is some overlap with material from \cite{KS}.

Let $X$ be a topological space and $A$ a commutative ring.
Recall that given sheaves $\mcal{M}, \mcal{N}$ of $A$-modules on $X$, the
sheaf of $A$-modules
$\mcal{N} \otimes_A \mcal{M}$ is the sheaf associated to the presheaf
\[ U \mapsto \Gamma(U, \mcal{N}) \otimes_A \Gamma(U, \mcal{M}) , \]
for open sets $U \subset X$. 
If $N$ is an $A$-module, then we can similarly consider the sheaf 
$N \otimes_A \mcal{M}$ on $X$; this is the sheaf associated to the presheaf
\[ U \mapsto N \otimes_A \Gamma(U, \mcal{M}) . \]
Given an $A$-algebra $B$, the sheaf
$B \otimes_A \mcal{M}$ becomes a sheaf of $B$-modules.
If \linebreak  $\{ \mcal{M}_i \}_{i \in \N}$ is an inverse system of sheaves of
modules on $X$, then 
$\opn{lim}_{\leftarrow i}\, \mcal{M}_i$ is the sheaf
$U \mapsto \opn{lim}_{\leftarrow i}\, \Gamma(U, \mcal{M}_i)$.
Recall that the sheaf $\mcal{M}$ is said to be flat over $A$ if for
every point $x \in X$ the stalk $\mcal{M}_x$ is a flat $A$-module. 

Now suppose $A$ is a complete noetherian local ring, with maximal
ideal $\m$. For $i \geq 0$ we write $A_i := A / \m^{i+1}$. 

\begin{dfn}
Let $\mcal{M}$ be a sheaf of $A$-modules on $X$. 
\begin{enumerate}
\item The $\m$-adic completion of $\mcal{M}$ is the sheaf
\[ \what{\mcal{M}} := \opn{lim}_{\leftarrow i}\, 
(A_i \otimes_A \mcal{M}) . \]
\item The sheaf $\mcal{M}$ is called {\em $\m$-adically complete} if
the canonical sheaf homomorphism
$\tau_{\mcal{M}} : \mcal{M} \to \what{\mcal{M}}$ is an isomorphism.
\end{enumerate}
\end{dfn}

We sometimes use the notation
$\Lambda_{\m} \mcal{M} := \what{\mcal{M}}$. With this notation we have an
additive functor
\[ \Lambda_{\m} : \cat{Mod} A_X \to \cat{Mod} A_X . \]
Here $A_X$ is the constant sheaf $A$ on $X$, and $\cat{Mod} A_X$ is the
category of sheaves of $A_X$-modules, which is the same as the category of
sheaves of $A$-modules on $X$.

Suppose $B$ is another complete noetherian local ring, with maximal ideal $\n$,
and we are given a local homomorphism $A \to B$. For any sheaf of $A$-modules
$\mcal{M}$ on $X$, and any $B$-module $N$, we write
\[ N \hatotimes{A} \mcal{M} := \Lambda_{\n} (N \otimes_{A} \mcal{M}) . \]

The inverse limit in the completion operation does not commute with
the direct limit of passing to stalks. Hence the stalk $\mcal{M}_x$ of
an $\m$-adically complete sheaf of $A$-modules $\mcal{M}$, at a point
$x \in X$, is usually not an $\m$-adically complete $A$-module. 
This is a well known fact; see \cite[Paragraph 10.1.5]{EGA-I}, or
the next example.

\begin{exa}
Take $X := \mbf{A}^1_{\K} = \opn{Spec} \K[t]$, the 
affine line over an infinite field $\K$, with coordinate $t$ and
structure sheaf $\mcal{O}_X$. 
Let $A := \K[[s ]]$, the formal power series ring in the variable
$s $. This is a complete noetherian local ring, whose maximal
ideal is $\m = (s)$. Let
\[ \mcal{M} := \mcal{O}_X[[s ]] \cong  
A \hatotimes{\K} \mcal{O}_X  . \]
The sheaf $\mcal{M}$ is $\m$-adically complete; indeed on any open
set $U \subset X$ (they are all affine) one has
$\Gamma(U, \mcal{M}) \cong \Gamma(U, \mcal{O}_X)[[s ]]$.

Now let us look at the closed point $x := (t) \in X$. 
Here the stalk is
\[ \mcal{M}_x \cong \opn{lim}_{U \rightarrow} \, 
\Gamma(U, \mcal{O}_X)[[s ]] , \]
where $U$ runs over the open neighborhoods of $x$. 
This is a dense submodule of its completion
$\what{\mcal{M}_x} \cong \mcal{O}_{X, x}[[s ]] \cong 
\K[t]_{(t)}[[s ]]$.
Given an element $a \in \mcal{M}_x$, there is an open
neighborhood $U$ of $x$, such that 
$a = \sum_{i \geq 0} a_i s ^i$,
with
$a_i \in \Gamma(U, \mcal{O}_X)$. 
Thus if we choose a sequence $\{ \lambda_i \}_{i \geq 0}$
of distinct elements of $\K$, all nonzero, then 
the power series
$a := \sum_{i \geq 0} (t - \lambda_i)^{-1} s ^i$
is in $\what{\mcal{M}_x}$ but not in $\mcal{M}_x$. 
\end{exa}

Even if the ideal $\m$ is nilpotent, so completion is not an issue, it
is not very useful to consider sheaves of $A$-modules on $X$ that are
locally free. This is because such a sheaf must be locally constant. 
The standard practice is to talk about flat sheaves of $A$-modules.

Let $\mcal{M}$ be a sheaf of $A$-modules on $X$. 
For $i \geq 0$ we define $\m^i \mcal{M}$ to be the image of the canonical
sheaf homomorphism
$\m^i \otimes_A \mcal{M} \to \mcal{M}$.
Let 
\[ \opn{gr}^i_{\m} \mcal{M} := \m^i \mcal{M} / \m^{i+1} \mcal{M} . \]
The direct sum 
$\opn{gr}_{\m} \mcal{M} := \boplus_i \opn{gr}^i_{\m} \mcal{M}$ is a sheaf of
graded modules over the graded ring $\opn{gr}_{\m} A$. 

\begin{prop}
Let $\mcal{M}$ be a flat sheaf of  $A$-modules on $X$.
Then the canonical sheaf homomorphism
\[ (\opn{gr}_{\m} A) \otimes_{A_0} \opn{gr}^0_{\m} \mcal{M} \to 
\opn{gr}_{\m} \mcal{M} \]
is an isomorphism. 
\end{prop}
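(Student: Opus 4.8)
The plan is to reduce the statement to a purely local, algebraic fact about flat modules over the local ring $A$ and then apply it stalkwise. First I would recall that flatness of $\mcal{M}$ means each stalk $\mcal{M}_x$ is a flat $A$-module, and that forming stalks commutes with tensor products and with the formation of $\m^i \mcal{M}$, $\opn{gr}^i_\m \mcal{M}$, and the graded pieces (stalks are exact filtered colimits). Hence the canonical sheaf homomorphism in question has stalk at $x$ equal to the canonical homomorphism
\[ (\opn{gr}_\m A) \otimes_{A_0} \bigl( \mcal{M}_x / \m \mcal{M}_x \bigr) \to \opn{gr}_\m (\mcal{M}_x) . \]
So it suffices to prove: for a flat $A$-module $M$, the canonical map $(\opn{gr}_\m A) \otimes_{A_0} (M / \m M) \to \opn{gr}_\m M$ is an isomorphism. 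Since a map of sheaves is an isomorphism iff it is an isomorphism on all stalks, this finishes the reduction.

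For the algebraic statement, the key point is that for a flat module $M$ the natural map $\m^i \otimes_A M \to M$ is injective with image $\m^i M$, and more usefully $\a \otimes_A M \cong \a M$ for every ideal $\a$. The standard argument: tensoring $0 \to \m^{i+1} \to \m^i \to \opn{gr}^i_\m A \to 0$ with the flat module $M$ gives a short exact sequence $0 \to \m^{i+1} M \to \m^i M \to (\opn{gr}^i_\m A) \otimes_A M \to 0$, using $\m^j \otimes_A M \cong \m^j M$. Thus $\opn{gr}^i_\m M \cong (\opn{gr}^i_\m A) \otimes_A M \cong (\opn{gr}^i_\m A) \otimes_{A_0} (A_0 \otimes_A M) = (\opn{gr}^i_\m A) \otimes_{A_0} (M/\m M)$, where the middle isomorphism holds because $\opn{gr}^i_\m A$ is already an $A_0$-module so tensoring over $A$ factors through $A_0 \otimes_A (-)$. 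Summing over $i$ gives the desired isomorphism, and one checks it agrees with the canonical map (it is built from the same structure maps).

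I would then assemble these: state the algebraic lemma, prove it as above, and deduce the proposition by passing to stalks, invoking that $(\opn{gr}_\m A) \otimes_{A_0} \opn{gr}^0_\m \mcal{M}$ and $\opn{gr}_\m \mcal{M}$ have the expected stalks and that the canonical sheaf map induces the canonical map on stalks. The main obstacle — really the only subtle point — is the bookkeeping that taking stalks commutes with all the constructions involved ($\m^i \mcal{M}$ is defined as an image sheaf, and one must note that sheafification and image formation are compatible with stalks, so $(\m^i \mcal{M})_x = \m^i (\mcal{M}_x)$), and that the canonical sheaf homomorphism is genuinely the sheafification of the one induced on presheaf sections, hence on stalks is the algebraic canonical map. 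Everything else is the routine flatness computation sketched above. Note also that nothing here requires $A$ to be complete or even noetherian for this particular proposition — only flatness is used — though completeness is the running hypothesis of the section.
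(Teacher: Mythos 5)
Your proposal is correct and follows essentially the same route as the paper: reduce the assertion to stalks, where $\mcal{M}_x$ is a flat $A$-module, and then invoke the standard algebraic fact that for a flat module $M$ the natural map $(\opn{gr}_{\m} A) \otimes_{A_0} (M/\m M) \to \opn{gr}_{\m} M$ is an isomorphism. The only difference is that the paper cites Bourbaki \cite[Theorem III.5.1]{CA} for this algebraic fact, whereas you prove it directly by tensoring $0 \to \m^{i+1} \to \m^i \to \opn{gr}^i_{\m} A \to 0$ with $M$; your remark that neither noetherianness nor completeness is needed here is also accurate.
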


\begin{proof}
It is enough to show that this homomorphism becomes an isomorphism 
at stalks. But at a point $x \in X$ the $A$-module $\mcal{M}_x$
is flat, so we can use \cite[Theorem III.5.1]{CA}.
\end{proof}

\begin{dfn} \label{dfn:20}
Let $\mcal{N}$ be a sheaf of abelian groups on $X$.
We denote by $\mrm{H}^1(X, \mcal{N})$ its first sheaf cohomology.
\begin{enumerate}
\item We say that an open set $U$ of $X$ is {\em $\mcal{N}$-simply connected} 
if $\mrm{H}^1(U, \mcal{N}) = 0$. 
\item The space $X$ is said to be {\em locally $\mcal{N}$-simply connected} if
it has a basis of the topology consisting of open sets
that are $\mcal{N}$-simply connected.
\end{enumerate}
\end{dfn}

\begin{exa}
Here are a few typical examples of a topological space $X$, and a
sheaf $\mcal{N}$, such that $X$ is locally $\mcal{N}$-simply connected:
\begin{enumerate}
\item $X$ is an algebraic variety over a field, with structure sheaf 
$\mcal{O}_X$, and $\mcal{N}$ is a coherent $\mcal{O}_X$-module. Then
any affine open set $U$ is $\mcal{N}$-simply connected.
\item $X$ is a complex analytic manifold, with structure sheaf 
$\mcal{O}_X$, and $\mcal{N}$ is a coherent $\mcal{O}_X$-module. Then
any Stein open set $U$ is $\mcal{N}$-simply connected.
\item $X$ is a differentiable manifold, with structure
sheaf $\mcal{O}_X$, and $\mcal{N}$ is any $\mcal{O}_X$-module. Then
any open set $U$ is $\mcal{N}$-simply connected.
\item $X$ is a differentiable manifold, and $\mcal{N}$ is a constant
sheaf of abelian groups. Then any contractible open set $U$ is $\mcal{N}$-simply
connected.
\end{enumerate}
\end{exa}

\begin{thm} \label{thm:3}
Let $A$ be a complete noetherian local ring, with maximal
ideal $\m$. Let $X$ be a topological space, and let $\mcal{M}$ be a 
flat $\m$-adically complete sheaf of $A$-modules on  $X$. 
We write $\mcal{M}_i := A_i \otimes_A \mcal{M}$,
$M :=  \Gamma(X, \mcal{M})$ and 
$M_i := \Gamma(X, \mcal{M}_i)$
for $i \geq 0$.
Assume that $X$ is $\mcal{M}_0$-simply connected. Then the following
are true.
\begin{enumerate}
\item The $A$-module $M$ is $\m$-adically free.
\item For every $i \geq 0$ the canonical homomorphism
$A_i \otimes_{A} M \to M_i$ is bijective.
\end{enumerate}
\end{thm}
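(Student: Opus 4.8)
The plan is to reduce Theorem \ref{thm:3} to Theorem \ref{thm:1}, i.e.\ to show that $\{ M_i \}_{i \in \N}$ is an $\m$-adic system of $A$-modules with each $M_i$ flat over $A_i$, and that $M = \lim_{\leftarrow i} M_i$. The first thing I would check is that $M_i$ is flat over $A_i$. Since $\mcal{M}$ is flat over $A$, the stalks $(\mcal{M}_i)_x = A_i \otimes_A \mcal{M}_x$ are flat over $A_i$; but flatness of the module of global sections $M_i = \Gamma(X, \mcal{M}_i)$ does not follow formally from flatness of stalks, so this is where the hypothesis that $X$ is $\mcal{M}_0$-simply connected (hence all the $\mcal{M}_i$-simply connectedness one can deduce) must enter, presumably via an interpretation of $M_i$ as a ``filtered'' object and a long exact cohomology sequence argument --- see below.

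Second, I would establish the $\m$-adic system conditions. Condition (i), namely $\m^{i+1} M_i = 0$, is immediate since $\m^{i+1} \mcal{M}_i = 0$ at the sheaf level. Condition (ii), that $A_i \otimes_{A_{i+1}} M_{i+1} \to M_i$ is an isomorphism, is the crux: at the sheaf level we have $A_i \otimes_{A_{i+1}} \mcal{M}_{i+1} \cong \mcal{M}_i$ (both being $A_i \otimes_A \mcal{M}$), but applying $\Gamma(X, -)$ and tensoring do not automatically commute. The natural approach is to look at the short exact sequence of sheaves
\[ 0 \to \m^{i+1}/\m^{i+2} \otimes_{A_0} \mcal{M}_0 \to \mcal{M}_{i+1} \to \mcal{M}_i \to 0 , \]
where the left-hand term is identified using the flatness result (the Proposition just before Definition \ref{dfn:20}, which gives $\opn{gr}_{\m} \mcal{M} \cong \opn{gr}_{\m} A \otimes_{A_0} \opn{gr}^0_{\m} \mcal{M}$); note $\m^{i+1}/\m^{i+2}$ is a finite-dimensional $A_0$-vector space, so this term is a finite direct sum of copies of $\mcal{M}_0$. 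Taking the long exact cohomology sequence and using $\mathrm{H}^1(X, \mcal{M}_0) = 0$ (from $\mcal{M}_0$-simple connectedness, applied with $U = X$) kills the obstruction: $\Gamma(X,-)$ is exact on this particular sequence. Running this inductively gives both the flatness of $M_i$ over $A_i$ (lifting a flat module through a square-zero extension with vanishing $\mathrm{Tor}$, exactly as in the Lemma before the proof of Theorem \ref{thm:1}) and the surjectivity/bijectivity needed for condition (ii), together with the identification $\Gamma(X, \m^{i+1}\mcal{M}) = \m^{i+1} M$ type statements.

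Third, once $\{ M_i \}$ is known to be an $\m$-adic system with $M_i$ flat over $A_i$, I must check $M = \Gamma(X, \mcal{M}) \cong \lim_{\leftarrow i} M_i$. Since $\mcal{M} = \what{\mcal{M}} = \lim_{\leftarrow i} \mcal{M}_i$ as sheaves, and $\Gamma(X, -)$ commutes with inverse limits of sheaves, we get $M = \Gamma(X, \lim_{\leftarrow i} \mcal{M}_i) = \lim_{\leftarrow i} \Gamma(X, \mcal{M}_i) = \lim_{\leftarrow i} M_i$. Then Theorem \ref{thm:1}(1) gives that $M$ is $\m$-adically free, which is part (1), and Theorem \ref{thm:1}(2) (equivalently Theorem \ref{thm:2}(1)) gives $A_i \otimes_A M \xrightarrow{\simeq} M_i$, which is part (2).

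\textbf{Main obstacle.} The hard part is genuinely the passage between sheaf-level and global-sections-level exactness, i.e.\ showing $\Gamma(X,-)$ behaves exactly enough on the relevant short exact sequences. This is precisely what the $\mcal{M}_0$-simply connectedness is for: since every term appearing is (locally, or even globally after the $\opn{gr}$ computation) built out of $\mcal{M}_0$ by finite direct sums, the vanishing $\mathrm{H}^1(X, \mcal{M}_0) = 0$ suffices to make the long exact sequence degenerate. I would need to be careful that the inductive bookkeeping only ever requires $\mathrm{H}^1$ of (finite sums of) $\mcal{M}_0$ and never of some genuinely larger sheaf; the $\opn{gr}$-computation from the earlier Proposition is exactly what guarantees this, since $\m^{i+1}/\m^{i+2}$ is finite-dimensional over the field $A_0$. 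Everything else --- the flat-lifting lemma, the Mittag-Leffler/inverse-limit manipulations, the appeal to Theorems \ref{thm:1} and \ref{thm:2} --- is then bookkeeping already carried out in the complete-local case.
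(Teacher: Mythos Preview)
Your approach is correct and follows the paper's overall strategy: show that $\{M_i\}$ is an $\m$-adic system of flat $A_i$-modules, note $M \cong \lim_{\leftarrow i} M_i$ (since $\Gamma$ commutes with inverse limits of sheaves), and apply Theorem \ref{thm:1}. The paper packages the cohomological input differently, into an auxiliary result (Lemma \ref{lem:5}): for \emph{every} $A_i$-module $N$ one has $\mrm{H}^1(X, N \otimes_{A_i} \mcal{M}_i) = 0$ and $N \otimes_{A_i} M_i \xrightarrow{\simeq} \Gamma(X, N \otimes_{A_i} \mcal{M}_i)$, proved by induction on $i$ via the filtration $0 \to V \to N \to A_{i-1} \otimes_{A_i} N \to 0$ with $V$ an $A_0$-module. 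Flatness of $M_i$ then drops out by testing exactness of $(-) \otimes_{A_i} M_i$ on an arbitrary short exact sequence of $A_i$-modules, and the transition isomorphism is the case $N = A_{i-1}$. Your variant instead uses only the specific sequence $0 \to (\m^{i+1}/\m^{i+2}) \otimes_{A_0} \mcal{M}_0 \to \mcal{M}_{i+1} \to \mcal{M}_i \to 0$, exploiting that $\m^{i+1}/\m^{i+2}$ is finite-dimensional over $A_0$ so only $\mrm{H}^1$ of \emph{finite} sums of $\mcal{M}_0$ is ever required --- a mild economy over the paper's formulation. One small correction: the lemma preceding Theorem \ref{thm:1} runs in the opposite direction (it \emph{assumes} $M_{i+1}$ is flat and deduces that a lifted basis is a basis), so it is not literally what you invoke. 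What you actually need is either a direct basis-lifting argument from the global exact sequence $0 \to I \otimes_{A_0} M_0 \to M_{i+1} \to M_i \to 0$ (with $I = \m^{i+1}/\m^{i+2}$, using $I^2 = 0$ for surjectivity and the explicit kernel for injectivity), or the local criterion of flatness for the square-zero ideal $I \subset A_{i+1}$; both go through without difficulty once the exact sequence is in hand.
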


We need a lemma first.

\begin{lem} \label{lem:5}
In the setup of the theorem, let $N$ be an $A_i$-module. Then\tup{:}
\begin{enumerate}
\item $\mrm{H}^1(X, N \otimes_{A_i} \mcal{M}_i) = 0$.
\item The canonical homomorphism
\[ N \otimes_{A_i} M_i \to 
\Gamma(X, N \otimes_{A_i} \mcal{M}_i) \] 
is bijective.
\end{enumerate}
\end{lem}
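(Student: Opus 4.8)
\textbf{Proof proposal for Lemma \ref{lem:5}.}

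The plan is to reduce both statements to the case of a \emph{free} $A_i$-module $N$ by a finite presentation argument, using the flatness of $\mcal{M}_i$ over $A_i$ (which follows from the flatness of $\mcal{M}$ over $A$, since $\mcal{M}_i = A_i \otimes_A \mcal{M}$ and $A_i$ is a quotient of $A$). For a free module $N \cong \opn{F}_{\mrm{fin}}(W, A_i)$ with $W$ a \emph{finite} index set — and here we crucially use that $A_i$ is noetherian, so $N$ is finitely generated, though actually we first need to handle arbitrary, possibly infinitely generated, $A_i$-modules $N$, so let me instead present $N$ as a cokernel of a map of free modules — we have $N \otimes_{A_i} \mcal{M}_i \cong \bigoplus_{w \in W} \mcal{M}_i$ (a \emph{finite} direct sum when $W$ is finite). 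Then $\mrm{H}^1(X, \mcal{M}_i^{\oplus W}) \cong \mrm{H}^1(X, \mcal{M}_i)^{\oplus W}$, and by dévissage along the finite filtration of $\mcal{M}_i = A_i \otimes_A \mcal{M}$ coming from the powers of $\m/\m^{i+1}$ — whose graded pieces are of the form $A_0 \otimes_A \mcal{M}$ tensored with a vector space over $A_0$, hence direct sums of copies of $\mcal{M}_0$ — together with the long exact cohomology sequence, the vanishing $\mrm{H}^1(X, \mcal{M}_0) = 0$ coming from $\mcal{M}_0$-simple connectedness of $X$ propagates to give $\mrm{H}^1(X, \mcal{M}_i) = 0$, and likewise $\mrm{H}^1(X, \mcal{M}_i^{\oplus W}) = 0$. (One must be careful that $\mrm{H}^1$ commutes with the relevant direct sums; on a space that is not noetherian this can fail, but it holds for finite direct sums, which is all we strictly need after reducing to finitely generated $N$ — and by noetherianity of $A_i$ and standard colimit arguments the general case follows.)

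For part (2): having established that $\mrm{H}^1$ of every sheaf of the form $N' \otimes_{A_i} \mcal{M}_i$ vanishes (for $N'$ free of finite rank, hence by the filtration argument for all $N'$ that are subquotients), choose a finite presentation $F_1 \to F_0 \to N \to 0$ of $N$ over $A_i$. Tensoring with $\mcal{M}_i$ over $A_i$ is right exact, and since $\mcal{M}_i$ is flat over $A_i$ it is in fact exact, so we get an exact sequence of sheaves $F_1 \otimes_{A_i} \mcal{M}_i \to F_0 \otimes_{A_i} \mcal{M}_i \to N \otimes_{A_i} \mcal{M}_i \to 0$; splitting off the kernel $\mcal{K}$ of the left map (again a subsheaf of a flat sheaf, to which the $\mrm{H}^1$-vanishing applies) and applying $\Gamma(X, -)$ to the two short exact sequences, the vanishing of the relevant $\mrm{H}^1$'s makes $\Gamma(X, -)$ exact on this diagram. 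Comparing with the right-exact sequence $F_1 \otimes_{A_i} M_i \to F_0 \otimes_{A_i} M_i \to N \otimes_{A_i} M_i \to 0$ obtained by applying $N \otimes_{A_i} -$ (or rather $- \otimes_{A_i} M_i$) and using that $\Gamma(X, F_j \otimes_{A_i} \mcal{M}_i) = F_j \otimes_{A_i} M_i$ for free $F_j$ of finite rank (immediate, $\Gamma$ commutes with finite direct sums), the five lemma gives that $N \otimes_{A_i} M_i \to \Gamma(X, N \otimes_{A_i} \mcal{M}_i)$ is bijective.

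The main obstacle I anticipate is the bookkeeping around \emph{infinite} direct sums: $\mrm{H}^1(X, -)$ and $\Gamma(X, -)$ do not in general commute with arbitrary direct sums on an arbitrary topological space, so one cannot naively reduce to free modules of infinite rank. The clean route is: first prove both statements for $N$ a \emph{finitely generated} $A_i$-module — here finite presentations and finite direct sums suffice, and the dévissage along the $\m$-adic filtration of $\mcal{M}_i$ (finite since $\m^{i+1}$ kills $\mcal{M}_i$) together with the hypothesis $\mrm{H}^1(X, \mcal{M}_0) = 0$ does all the work — and then observe that an arbitrary $A_i$-module is a filtered colimit of its finitely generated submodules, but since we are in the setting where $A$ (hence $A_i$) is noetherian, one can alternatively just argue that the statement for finitely generated $N$ is what is actually invoked in the proof of Theorem \ref{thm:3} (e.g.\ for $N = A_0$ or $N$ a quotient/ideal of $A_i$), so it is harmless to state the lemma with that implicit restriction or to spell out the colimit step. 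I would present the finitely-generated case in full and remark on the reduction.
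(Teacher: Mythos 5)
Your reduction for part (1) has a genuine gap. You establish $\mrm{H}^1(X, \mcal{M}_i) = 0$ by d\'evissage and hence conclude $\mrm{H}^1(X, N \otimes_{A_i} \mcal{M}_i) = 0$ for $N$ free of finite rank, and then claim this extends ``by the filtration argument for all $N'$ that are subquotients.'' That step does not work: if $0 \to K \to F_0 \to N \to 0$ is a presentation and you tensor it with the flat $\mcal{M}_i$, the long exact sequence reads
\[ \mrm{H}^1(X, F_0 \otimes_{A_i} \mcal{M}_i) \to \mrm{H}^1(X, N \otimes_{A_i} \mcal{M}_i) \to \mrm{H}^2(X, K \otimes_{A_i} \mcal{M}_i), \]
and vanishing of the left term does not force vanishing of the middle term without control of the $\mrm{H}^2$ on the right, which you do not have. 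Vanishing of $\mrm{H}^1$ is neither inherited by quotient sheaves nor by subsheaves in general, so ``subquotients of free modules'' --- i.e.\ all finitely generated modules --- do not come for free. Since part (2) of your argument calls on part (1) for the (non-free) kernel $K$, the gap propagates there too.

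The correct device, and what the paper does, is to d\'evisser $N$ rather than present it. The paper inducts on $i$, using the exact sequence $0 \to V \to N \to A_{i-1} \otimes_{A_i} N \to 0$ with $V := \m^i N$ an $A_0$-module. Tensoring with the flat sheaf $\mcal{M}_i$ gives
\[ 0 \to V \otimes_{A_0} \mcal{M}_0 \to N \otimes_{A_i} \mcal{M}_i \to \bar{N} \otimes_{A_{i-1}} \mcal{M}_{i-1} \to 0, \]
and in the cohomology exact sequence the middle $\mrm{H}^1$ is caught between the $\mrm{H}^1$ of the two outer terms, which vanish by the base case and the induction hypothesis respectively --- no $\mrm{H}^2$ appears. (Equivalently: filter $N$ by $\m^j N$ and run a downward induction on $j$; each step only involves an $\mrm{H}^1 \to \mrm{H}^1 \to \mrm{H}^1$ segment.) Your instinct to use a finite filtration with graded pieces built out of $\mcal{M}_0$ is the right one, but it must be applied to $N$, not used to reduce $N$ to a free module. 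Part (2) is then proved by the same induction and the five lemma, rather than by a finite presentation.

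Your worry about infinite direct sums is reasonable, and indeed the base case $i=0$ requires $\mrm{H}^1(X, N \otimes_{A_0} \mcal{M}_0) = 0$ for $N$ a possibly infinite-rank $A_0$-vector space, which uses that $\mrm{H}^1$ commutes with the relevant direct sum; the paper is silent on this point as well, but in all the applications (Theorem \ref{thm:3} and Corollary \ref{cor:5}) only finitely generated $N$ is needed, so restricting the lemma to finitely generated $N$ as you propose is harmless. The real fix you need is to swap the finite-presentation reduction for the induction on $i$.
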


Again this is familiar, but we did not find a reference.

\begin{proof}
(1) The proof is by induction on $i$. For $i = 0$ the ring
$\K := A_0$ is a field, so $N$ is a free $\K$-module, and
\[ \mrm{H}^1(X, N \otimes_{\K} \mcal{M}_0) \cong
N \otimes_{\K} \mrm{H}^1(X, \mcal{M}_0) = 0 . \]
Now assume $i \geq 1$. We have an exact sequence of $A_i$-modules
\[ 0 \to V \to N \to A_{i-1} \otimes_{A_i} N \to 0 , \]
where $V$ is some $\K$-module. Since the sheaf
$\mcal{M}_i$ is flat over $A_i$, there is an exact sequence of sheaves
\[ 0 \to V \otimes_{A_i} \mcal{M}_i \to 
N \otimes_{A_i} \mcal{M}_i \to 
A_{i-1} \otimes_{A_i} N  \otimes_{A_i} \mcal{M}_i \to 0 , \]
which can be rewritten as
\[ 0 \to V \otimes_{\K} \mcal{M}_0 \to 
N \otimes_{A_i} \mcal{M}_i \to 
\bar{N}  \otimes_{A_{i-1}} \mcal{M}_{i-1} \to 0 , \]
where $\bar{N} := A_{i-1} \otimes_{A_i} N$. 
In global cohomology we get an an exact sequence
\[ \cdots \to \mrm{H}^1(X, V \otimes_{\K} \mcal{M}_0) 
\to \mrm{H}^1(X, N \otimes_{A_i} \mcal{M}_i) 
\to \mrm{H}^1(X,  \bar{N}  \otimes_{A_{i-1}} \mcal{M}_{i-1}) \to
\cdots . \]
The induction hypothesis says that the two extremes vanish; and hence
so does the middle term.

\medskip \noindent
(2) The proof is like the first part. For $i = 0$ we have
$N \otimes_{\K} M_0 \cong \Gamma(X, N \otimes_{\K} \mcal{M}_0)$
since $N$ is a free $\K$-module.  
For $i \geq 1$ we have a commutative diagram
\[ \UseTips \xymatrix @C=4ex @R=5ex {
&
V \otimes_{\K} M_0
\ar[r]
\ar[d]
& 
N \otimes_{A_i} M_i
\ar[r]
\ar[d]
&
\bar{N} \otimes_{A_{i-1}} M_{i-1}
\ar[d]
\ar[r]
& 
0
\\
0 
\ar[r]
&
\Gamma(X, V \otimes_{\K} \mcal{M}_0)
\ar[r]
&
\Gamma(X, N \otimes_{A_i} \mcal{M}_i)
\ar[r]
&
\Gamma(X, \bar{N}  \otimes_{A_{i-1}} \mcal{M}_{i-1})
} \]
with exact rows. By induction the extreme vertical arrows are
bijective. Hence so is the middle one.
\end{proof}

\begin{proof}[Proof of the theorem]
We know that 
$M \cong \opn{lim}_{\leftarrow i}\, M_i$. 
In view of Theorem \ref{thm:1} it suffices to
prove that for each $i$ the module $M_i$ is flat over $A_i$, and the canonical
homomorphism
$A_{i-1} \otimes_{A_{i}} M_{i} \to M_{i-1}$ is bijective.
The second assertion is true by Lemma \ref{lem:5}(2), taking 
$N := A_{i-1}$. 

As for flatness of $M_i$,  take an exact sequence 
\[ 0 \to N' \to N \to N'' \to 0 \]
of $A_i$-modules. Since the sheaf $\mcal{M}_i$ is flat over $A_i$, we get an 
exact sequence of sheaves
\[ 0 \to N' \otimes_{A_i} \mcal{M}_i \to 
N \otimes_{A_i} \mcal{M}_i \to N'' \otimes_{A_i} \mcal{M}_i \to 0 . \]
By Lemma \ref{lem:5}(1) we know that 
$\mrm{H}^1(X, N' \otimes_{A_i} \mcal{M}_i) = 0$, so
the sequence
\[ 0 \to \Gamma(X, N' \otimes_{A_i} \mcal{M}_i)
\to \Gamma(X, N \otimes_{A_i} \mcal{M}_i) \to 
\Gamma(X, N'' \otimes_{A_i} \mcal{M}_i) \to 0 \]
is exact. Finally, using Lemma \ref{lem:5}(2) we see that the
sequence
\[ 0 \to N' \otimes_{A_i} M_i \to 
N \otimes_{A_i} M_i \to 
N'' \otimes_{A_i} M_i \to 0 \]
is also exact.
\end{proof}

Here is the geometric version of Definition \ref{dfn:3}.

\begin{dfn}
An {\em $\m$-adic system of sheaves of $A$-modules} on $X$
is a collection \linebreak
$\{ \mcal{M}_i \}_{i \in \mbb{N}}$ of sheaves of $A$-modules, 
together with a collection 
$\{ \psi_i \}_{i \in \mbb{N}}$ of $A$-linear sheaf homomorphisms
$\psi_i : \mcal{M}_{i+1} \to \mcal{M}_i$.
The conditions are:
\begin{enumerate}
\rmitem{i} For every $i$ one has $\m^{i+1} \mcal{M}_i = 0$. Thus 
$\mcal{M}_i$ is a sheaf of $A_i$-modules.
\rmitem{ii} For every $i$ the $A_i$-linear sheaf homomorphism
$A_i \otimes_{A_{i+1}} \mcal{M}_{i+1} \to \mcal{M}_i$
induced by $\psi_i$ is an isomorphism. 
\end{enumerate}
\end{dfn}

\begin{rem}
When $A = \what{\mbb{Z}}_l$, the $l$-adic completion of $\mbb{Z}$
for some prime number $l$, this is called an $l$-adic sheaf.
Cf.\ \cite[Section 12]{FK}.
\end{rem}

\begin{cor} \label{cor:5}
Let $A$ be a complete noetherian local ring, with maximal
ideal $\m$, and let $\{ \mcal{M}_i \}_{i \in \mbb{N}}$ be an $\m$-adic system
of sheaves of $A$-modules on $X$. Assume these conditions hold\tup{:}
\begin{enumerate}
\rmitem{i} For every $i \geq 0$ the sheaf of $A_i$-modules
$\mcal{M}_i$ is flat.
\rmitem{ii} $X$ is locally $\mcal{M}_0$-simply connected.
\end{enumerate}
Define the sheaf of $A$-modules
$\mcal{M} := \opn{lim}_{\leftarrow i}\, \mcal{M}_i$.
Then the following are true.
\begin{enumerate}
\item $\mcal{M}$ is flat and $\m$-adically complete. 
\item For every $i$ the canonical sheaf homomorphism
$A_i \otimes_{A} \mcal{M} \to \mcal{M}_i$ is an isomorphism.
\item Let $U$ be an open set of $X$ that is 
$\mcal{M}_0$-simply connected. Then the $A$-module
$\Gamma(U, \mcal{M})$ is $\m$-adically free.
\end{enumerate}
\end{cor}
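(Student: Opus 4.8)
The plan is to reduce everything to the affine-ring results of Section~\ref{sec:noeth} and to Theorem~\ref{thm:1}, by passing to global sections over the $\mcal{M}_0$-simply connected open sets, which by hypothesis~(ii) form a basis $\mcal{B}$ of the topology of $X$.

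First I would prove item~(3), which is the heart of the matter. Fix an $\mcal{M}_0$-simply connected open set $U$, and put $M_i := \Gamma(U, \mcal{M}_i)$ and $M := \Gamma(U, \mcal{M})$; since an inverse limit of sheaves is computed sectionwise, $M = \lim_{\leftarrow i} M_i$. I claim $\{ M_i \}_{i \in \N}$ is an $\m$-adic system of $A$-modules with each $M_i$ flat over $A_i$. The equality $\m^{i+1} M_i = 0$ is immediate from $\m^{i+1} \mcal{M}_i = 0$. For the remaining points, observe that the proof of Lemma~\ref{lem:5} uses only the flatness of each $\mcal{M}_i$ over $A_i$, the $\m$-adic-system isomorphisms $A_{i-1} \otimes_{A_i} \mcal{M}_i \cong \mcal{M}_{i-1}$, and the vanishing $\mrm{H}^1(U, \mcal{M}_0) = 0$ --- all of which hold here --- so that proof goes through verbatim with $U$ in place of $X$. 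Applying the resulting statement with $N := A_{i-1}$ shows $A_{i-1} \otimes_{A_i} M_i \to M_{i-1}$ is bijective, so $\{ M_i \}$ is indeed an $\m$-adic system; and tensoring a short exact sequence of $A_i$-modules with $\mcal{M}_i$, applying $\Gamma(U, -)$ (exact by the Lemma~\ref{lem:5}(1) analogue), and identifying the terms by the Lemma~\ref{lem:5}(2) analogue shows $M_i$ is flat over $A_i$ --- this is exactly the end of the proof of Theorem~\ref{thm:3}. Now Theorem~\ref{thm:1} applies to $\{ M_i \}$: part~(1) gives that $M = \Gamma(U, \mcal{M})$ is $\m$-adically free, which is item~(3), and part~(2) gives that $A_i \otimes_A M \to M_i$ is bijective for every $i$.

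Next I would deduce item~(2). The sheaf $A_i \otimes_A \mcal{M}$ is the sheafification of the presheaf $V \mapsto A_i \otimes_A \Gamma(V, \mcal{M})$, which carries a canonical morphism to the sheaf $\mcal{M}_i$. By the previous step this morphism is an isomorphism on every $U \in \mcal{B}$; since $\mcal{B}$ is cofinal among the open neighborhoods of each point, it is therefore an isomorphism on all stalks, and hence the induced sheaf homomorphism $A_i \otimes_A \mcal{M} \to \mcal{M}_i$ is an isomorphism. Then item~(1) follows: for completeness, item~(2) gives $\what{\mcal{M}} = \lim_{\leftarrow i} (A_i \otimes_A \mcal{M}) \cong \lim_{\leftarrow i} \mcal{M}_i = \mcal{M}$ compatibly with the transition maps, and one checks that under this the canonical map $\tau_{\mcal{M}}$ becomes the identity, so $\mcal{M}$ is $\m$-adically complete; for flatness, each stalk $\mcal{M}_x$ is the direct limit, over the $U \in \mcal{B}$ containing $x$, of the modules $\Gamma(U, \mcal{M})$, and each of these is $\m$-adically free, hence flat over $A$ by Theorem~\ref{thm:2}(2) (as $A = \what{A}$ is complete noetherian), so $\mcal{M}_x$ is flat, being a direct limit of flat modules.

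The step I expect to demand the most care is the first one: checking that the proofs of Lemma~\ref{lem:5} and of the flatness assertion in Theorem~\ref{thm:3} really do survive the replacement of $X$ by an arbitrary $\mcal{M}_0$-simply connected open $U$ --- in particular that at no point did those arguments use flatness or $\m$-adic completeness of $\mcal{M}$ itself, which are not yet available at that stage. The rest --- the sheafification bookkeeping for item~(2) and the two formal deductions of item~(1) --- should be routine.
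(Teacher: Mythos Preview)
Your proposal is correct and follows the same overall outline as the paper's proof: establish item~(3) first via Theorem~\ref{thm:1}, then deduce item~(2) by sheafifying over the basis $\mcal{B}$, then item~(1) from item~(2) and a direct-limit-of-flats argument on stalks.

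The one point where the paper is slicker is the verification step you flag as ``most delicate'': rather than re-inspecting the proofs of Lemma~\ref{lem:5} and the flatness argument in Theorem~\ref{thm:3} to check that they nowhere use flatness or completeness of $\mcal{M}$ itself, the paper sidesteps the issue entirely. For each fixed $j$ it applies Theorem~\ref{thm:3} as a black box to the \emph{artinian} local ring $A_j$ and the sheaf $\mcal{M}_j|_U$: here $\mcal{M}_j$ is flat over $A_j$ by hypothesis~(i), and it is trivially $\m_j$-adically complete because $\m_j = \m/\m^{j+1}$ is nilpotent; moreover $(A_j)_0 \otimes_{A_j} \mcal{M}_j \cong \mcal{M}_0$, so $U$ is simply connected for the right sheaf. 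Theorem~\ref{thm:3} then gives directly that $M_j$ is free over $A_j$ and that $A_i \otimes_{A_j} M_j \to M_i$ is bijective for $i \leq j$, which is exactly the input to Theorem~\ref{thm:1}. This trick buys you a cleaner citation and avoids the need to open up any earlier proof; your approach of checking that Lemma~\ref{lem:5} survives is equally valid, just a bit more work to write out.
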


\begin{proof}
Let $U$ be an $\mcal{M}_0$-simply connected open set in $X$.
Write $M := \Gamma(U, \mcal{M})$ and 
$M_i := \Gamma(U, \mcal{M}_i)$, so that 
$M \cong \opn{lim}_{\leftarrow i}\, M_i$.
Fix some $j \geq 0$. Theorem \ref{thm:3}, applied to the artinian ring 
$A_j$ and the sheaf of $A_j$-modules $\mcal{M}_j|_U$, says
that $M_j$ is a free $A_j$-module, and for every
$i \leq j$ the canonical homomorphism
$A_i \otimes_{A_j} M_j \to M_i$ is bijective.
Hence the collection $\{ M_i \}_{i \geq 0}$ satisfies the assumptions
of Theorem \ref{thm:1}, and we conclude that $M$ is $\m$-adically
free over $A$, and $A_i \otimes_A M \cong M_i$ for every $i$.

We have shown that the canonical homomorphism
\[ A_i \otimes_A \Gamma(U, \mcal{M}) \to \Gamma(U, \mcal{M}_i) \]
is bijective for every open set $U$ that is $\mcal{M}_0$-simply connected.
Since these open sets form a basis of the topology,
it follows that 
$A_i \otimes_A \mcal{M} \to \mcal{M}_i$
is an isomorphism of sheaves for all $i$. 
So $\mcal{M}$ is $\m$-adically complete.

Finally we must prove that for any point $x \in X$ the stalk 
$\mcal{M}_x$ is a flat $A$-module. But
$\mcal{M}_x \cong 
\opn{lim}_{U \rightarrow} \, \Gamma(U, \mcal{M})$, 
where the limit is over the open neighborhoods of $x$ that are
$\mcal{M}_0$-simply connected. Since each 
$\Gamma(U, \mcal{M})$ is flat over $A$ (by Corollary \ref{cor:1}), so is their
direct limit.
\end{proof}

\begin{cor}
Suppose $B$ is another complete noetherian local ring,
with maximal ideal $\mfrak{n}$, and $A \to B$ is a local 
homomorphism. Let $\mcal{M}$ be a flat $\m$-adically complete
sheaf of $A$-modules on
$X$. Assume that $X$ is locally $\mcal{M}_0$-simply connected,
where $\mcal{M}_0 := A_0 \otimes_A \mcal{M}$. Then the sheaf of
$B$-modules $B \hatotimes{A} \mcal{M}$ is flat and 
$\mfrak{n}$-adically complete.
\end{cor}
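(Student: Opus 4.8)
The plan is to exhibit $B \hatotimes{A} \mcal{M}$ as the inverse limit of an $\n$-adic system of flat sheaves of $B$-modules and then invoke Corollary \ref{cor:5}. For $i \geq 0$ set $B_i := B / \n^{i+1}$ and define the sheaf of $B$-modules
\[ \mcal{N}_i := B_i \otimes_A \mcal{M} \cong B_i \otimes_B (B \otimes_A \mcal{M}) , \]
with structure maps $\psi_i : \mcal{N}_{i+1} \to \mcal{N}_i$ induced by the surjections $B_{i+1} \to B_i$. Since $\mcal{N}_i$ is a sheaf of $B_i$-modules it is annihilated by $\n^{i+1}$, and the canonical map $B_i \otimes_{B_{i+1}} \mcal{N}_{i+1} \to \mcal{N}_i$ is visibly an isomorphism; thus $\{ \mcal{N}_i \}_{i \in \N}$ is an $\n$-adic system of sheaves of $B$-modules.

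Next I would verify the two hypotheses of Corollary \ref{cor:5}. For flatness of $\mcal{N}_i$ over $B_i$: the assumption that $\mcal{M}$ is flat over $A$ means every stalk $\mcal{M}_x$ is a flat $A$-module, hence $(B \otimes_A \mcal{M})_x = B \otimes_A \mcal{M}_x$ is flat over $B$, and therefore $(\mcal{N}_i)_x = B_i \otimes_B (B \otimes_A \mcal{M})_x$ is flat over $B_i$ by base change of flatness. For the local simple-connectedness: since $A \to B$ is local, $A_0 := A / \m$ is a subfield of $B_0 := B / \n$, and $\mcal{N}_0 \cong B_0 \otimes_{A_0} \mcal{M}_0$. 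If $U \subset X$ is $\mcal{M}_0$-simply connected then, $B_0$ being a free $A_0$-module, $\mrm{H}^1(U, \mcal{N}_0) \cong B_0 \otimes_{A_0} \mrm{H}^1(U, \mcal{M}_0) = 0$ (this is exactly the cohomology computation carried out in the base case of the proof of Lemma \ref{lem:5}). As $X$ is locally $\mcal{M}_0$-simply connected, the same basis of open sets witnesses that $X$ is locally $\mcal{N}_0$-simply connected.

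Now Corollary \ref{cor:5}, applied to the complete noetherian local ring $B$ and the system $\{ \mcal{N}_i \}$, yields that $\mcal{N} := \lim_{\leftarrow i} \mcal{N}_i$ is flat over $B$ and $\n$-adically complete. It then remains to identify $\mcal{N}$ with $B \hatotimes{A} \mcal{M}$. Because tensoring commutes with sheafification, the canonical isomorphism $\mcal{N}_i = B_i \otimes_A \mcal{M} \cong B_i \otimes_B (B \otimes_A \mcal{M})$ is compatible with the transition maps, and hence
\[ \mcal{N} = \lim_{\leftarrow i}\, \bigl( B_i \otimes_B (B \otimes_A \mcal{M}) \bigr) = \Lambda_{\n}(B \otimes_A \mcal{M}) = B \hatotimes{A} \mcal{M} . \]

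The only step requiring genuine care is the local simple-connectedness one: it rests on knowing that $\mrm{H}^1(U, -)$ commutes with scalar extension along the (possibly infinite-dimensional) field extension $A_0 \to B_0$, which is precisely the fact already established inside the proof of Lemma \ref{lem:5}. Everything else is base change of flatness together with the standard compatibility of the completion functor with the extension of scalars $A \to B$.
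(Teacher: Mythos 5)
Your proof is correct and follows essentially the same route as the paper: build the $\n$-adic system $\{\mcal{N}_i\}$, check flatness stalk-wise by base change, verify local $\mcal{N}_0$-simple connectedness using that $B_0$ is free over the field $A_0$, and invoke Corollary \ref{cor:5}. The paper's version is terser (it defines $\mcal{N}_i := B_i \otimes_{A_i} \mcal{M}_i$, which coincides with your $B_i \otimes_A \mcal{M}$ since $\m B \subset \n$), but the argument is the same; your extra detail in identifying $\lim_i \mcal{N}_i$ with $\Lambda_\n(B \otimes_A \mcal{M})$ is a reasonable thing to spell out.
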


\begin{proof}
Write
$\mcal{M}_i := A_i \otimes_A \mcal{M}$; so 
$\{ \mcal{M}_i \}_{i \in \mbb{N}}$ is an $\m$-adic system of
sheaves $A$-modules, and $\mcal{M}_i$ is flat over $A_i$.
Let 
$\mcal{N} := B \hatotimes{A} \mcal{M}$, 
$B_i := B / \mfrak{n}^{i+1}$ and
$\mcal{N}_i := B_i \otimes_{A_i} \mcal{M}_i$.
Then $\{ \mcal{N}_i \}_{i \in \mbb{N}}$ is an $\mfrak{n}$-adic system
of sheaves $B$-modules, $\mcal{N}_i$ is flat over $B_i$,
and $\mcal{N} \cong \opn{lim}_{\leftarrow i}\, \mcal{N}_i$.
Since $B_0$ is a free module over the field $A_0$, we have
\[ \mrm{H}^1(U, \mcal{N}_0) \cong
B_0 \otimes_{A_0} \mrm{H}^1(U, \mcal{M}_0) \]
for every open set $U$ in $X$. Therefore $X$ is locally $\mcal{N}_0$-simply
connected. Now we can use Corollary 
\ref{cor:5}.
\end{proof}


\end{document}